\documentclass{amsart}

\usepackage[normalem]{ulem}
\usepackage{amsmath,amsthm,amsfonts,amscd,amssymb, tikz-cd} 
\usepackage{verbatim}      	
\usepackage{listings} 
\usepackage{mathtools}
\usepackage{epsfig}         	
\usepackage{url}		
\usepackage{epic,latexsym}
\usepackage{graphicx}
\usepackage{xcolor}

\usepackage{lcd}
\usepackage{mathrsfs}
\usepackage[centering,text={15.5cm,22cm},
        marginparwidth=20mm,dvips]{geometry}

\usepackage{hyperref}
\hypersetup{%
pdftitle={Preprint},
pdfauthor={},
pdfkeywords={},
bookmarksnumbered,
pdfstartview={FitH},
breaklinks=true,
colorlinks=true,
urlcolor=blue,
citecolor=blue,
linktocpage=true
}%
\usepackage[hypcap=true]{caption}

\usepackage{breakurl}

\usepackage{lscape}
\usepackage{tabularx}
\usepackage{marginnote}
\usepackage[all]{xy}
\usepackage{enumitem}
\usepackage{stmaryrd}
\usepackage{tipa}
\usepackage{upgreek}

\usepackage{tikz}
\usetikzlibrary{
	decorations.pathreplacing, 
	decorations.markings,      
	calc,                     
	knots                     
}
\usepackage{caption}          
\usetikzlibrary{patterns}

\usepackage{float}
\usepackage{bm}

\tikzset{
	marked/.style={fill=black, circle, inner sep=1.5pt}
}

\DeclareMathOperator{\tw}{tw}

\def\!{\mskip-\thinmuskip}
\let\?=\overline
\let\:=\colon

\let\s=\mathcal

\newcommand {\mm} {{\bf m}}

\newcommand {\sd} {{\bf s}}

\theoremstyle{plain}
\ifdefined\thm
\else
	\theoremstyle{plain}
 	\newtheorem{thm}{Theorem}[section]
\fi

\ifdefined\prop
\else
	\theoremstyle{plain}
 	\newtheorem{prop}[thm]{Proposition}
\fi\ifdefined\cor
\else
	\theoremstyle{plain}
 	\newtheorem{cor}[thm]{Corollary}
\fi

\ifdefined\lem
\else
	\theoremstyle{plain}
 	\newtheorem{lem}[thm]{Lemma}
\fi

\ifdefined\def
\else
	\theoremstyle{definition}
	\newtheorem{def}[thm]{Definition}
\fi

\ifdefined\defn
\else
	\theoremstyle{definition}
	\newtheorem{defn}[thm]{Definition}
\fi

\ifdefined\example
\else
	\theoremstyle{definition}
    \newtheorem{example}[thm]{Example}
\fi

\ifdefined\eg
\else
		\theoremstyle{definition}
        \newtheorem{eg}[thm]{Example}
\fi

\ifdefined\rem
\else
		\theoremstyle{remark}
        \newtheorem{rem}[thm]{Remark}
\fi

  \newtheorem{dfn}[thm]{Definition}

  \newtheorem{conj}[thm]{Conjecture}

\newcommand{\supp}{\operatorname{supp}}

\newcommand{\teps}{{\tilde{\varepsilon}}}

\usepackage{tikz-cd}

\usepackage[autostyle, english=american]{csquotes}
\MakeOuterQuote{"}

\newcommand{\Hf}{\frac{1}{2}}

\newcommand{\clAlg}{\s{A}}
\newcommand{\bClAlg}{\overline{\clAlg}}
\newcommand{\upClAlg}{\s{U}}
\newcommand{\bUpClAlg}{\overline{\upClAlg}}

\renewcommand{\tw}{\operatorname{tw}}

\newcommand{\Arcs}{\mathbf{\Gamma}}
\newcommand{\Loops}{\mathbf{L}}
\newcommand{\Unknots}{\mathbf{N}}
\newcommand{\Unarcs}{\Arcs^0}

\ifdefined\supp
\else
\newcommand{\supp}{\operatorname{supp}}
\fi

\ifdefined\tw
\else
	\newcommand{\tw}{{\opname tw}}
\fi

\ifdefined\G
    \renewcommand{\G}{{\mathbb G}}
\else
   \newcommand{\G}{{\mathbb G}}
\fi

\ifdefined\C
    \renewcommand{\C}{{\mathbb C}}
\else
   \newcommand{\C}{{\mathbb C}}
\fi

\usepackage{comment}

\title{Band bases as common triangular bases in cluster algebras from surfaces}

\author{Fan Qin}
\address{Beijing Normal University \\ China}
\email{qin.fan.math@gmail.com}

\author{Chao Shen}
\address{Tsinghua University \\ China}
\email{sc20@mails.tsinghua.edu.cn}

\thanks{}

\begin{document}

\begin{abstract}	
    We consider the skein algebra of an unpunctured marked surface. Thurston previously constructed its band basis topologically. We show that this band basis coincides with the common triangular basis, which is a Kazhdan-Lusztig type basis for quantum cluster algebras analogous to the dual canonical basis of quantum groups. Our result confirms a conjecture of Thurston. It also provides new cases for the existence of the common triangular basis. In addition, we discover a phenomenon where certain unknots are arranged in configurations resembling beads on a necklace.
\end{abstract}
		
\maketitle
 
\setcounter{tocdepth}{1} \tableofcontents{}
	
\section{Introduction}	\label{intro}

\subsection{Backgrounds}

Cluster algebras, introduced by Fomin--Zelevinsky \cite{fomin2002cluster}, are commutative algebras generated by cluster variables which are constructed recursively via a combinatorial algorithm called mutation. They serve as an algebraic framework for studying the dual canonical basis \cite{Lusztig90, Lusztig91} \cite{Kashiwara90} and the theory of total positivity \cite{Lusztig96}. Subsequently, Berenstein--Zelevinsky \cite{BerensteinZelevinsky05} introduced a natural quantization of this structure, giving rise to the theory of quantum cluster algebras. 

A primary objective in the study of (quantum) cluster algebras is the construction of "good" bases. These bases are expected to share similar properties with the dual canonical bases of quantum groups. Important families of these bases include: 
\begin{itemize}
    \item The generic basis in the sense of Dupont \cite{dupont2011generic} generalizes the dual semi-canonical basis \cite{Lusztig00} to the cluster algebra setting \cite{GeissLeclercSchroeer10}. For explicit computations on tame quivers, see \cite{DingXiaoXu08} \cite{dupont2011generic}. Its general construction and existence can be found in \cite{plamondon2013generic} \cite{qin2021cluster}.

    \item The common triangular basis, introduced by the first author \cite{qin2017triangular}, is a Kazhdan--Lusztig type basis that extends the dual canonical basis to cluster algebras \cite{qin2017triangular, qin2020dual}. In acyclic cases, it coincides with the triangular basis of Berenstein--Zelevinsky \cite{BerensteinZelevinsky2012}, see \cite{qin2019compare} \cite[Remark 1.2.7]{qin2020analog}.
    
    \item The theta basis, introduced by Gross--Hacking--Keel--Kontsevich \cite{gross2018canonical}, consists of theta functions arising from mirror symmetry and scattering diagrams. Its quantum analogue was subsequently developed by Davison--Mandel \cite{davison2019strong}.
\end{itemize}
All three families of bases above are parameterized by tropical points, in accordance with the expectation of Fock--Goncharov \cite{fock2006moduli, FockGoncharov09}. It is believed that all known "good" bases for cluster algebras in the literature belong to these families.

In the setting of cluster algebras from marked surfaces (via skein algebras), several bases are constructed using topological objects, such as the bangle, bracelet, and band bases \cite{MusikerSchifflerWilliams11} \cite{thurston2014positive}. We refer the reader to Section \ref{sec:skein-alg} for the background on the marked surface $\Sigma$ and its skein algebra, and to Sections \ref{sec:wt-simple-multi-curve} and \ref{sec:band} for the details of the bangle and band bases, respectively. These bases have been compared with the general frameworks mentioned above. For a marked surface with non-empty boundary, Geiß--Labardini-Fragoso--Wilson \cite{geiss2023bangle} showed that the generic basis coincides with the bangle basis. Mandel--Qin \cite{MandelQin2021} established that the bracelet basis coincides with the theta basis for general marked surfaces.

Although the bangle and bracelet bases fit into the general frameworks, the band basis remains less understood. 
Thurston made the following bold conjecture \cite{thurston2014positive}:

\begin{center}
    \it The (quantized) band basis is an analogue of the dual canonical basis for the skein algebra.
\end{center}

Recall that the common triangular basis provides an analogue of the dual canonical basis \cite{qin2017triangular, qin2020analog}. Therefore, Thurston's conjecture follows from the conjecture below.

\begin{conj}\label{conj:band-is-triangular}
    The (quantized) band basis is the common triangular basis of the skein algebra after localization at the frozen variables.
\end{conj}

\begin{rem}
    The only known evidence for this conjecture is the Kronecker case. In this setting, the common triangular basis is identified with the dual canonical basis. The dual canonical basis is known to be the band basis because its elements (which correspond to imaginary roots) satisfy the Chebyshev recursion \cite{Lamp10}. Therefore, the common triangular basis is realized as the band basis in the Kronecker case.
\end{rem}

\subsection{Main results and our approach}
Our main result is a proof of Conjecture \ref{conj:band-is-triangular} for unpunctured surfaces. 

\begin{thm}\label{thm:main}
    For any unpunctured surface $\Sigma=(\mathcal{S}, \mathcal{M})$, the band basis of the skein algebra $\mathrm{Sk}_q^\circ(\Sigma)$ coincides with the common triangular basis.
\end{thm}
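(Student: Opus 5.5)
We will verify the defining properties of the common triangular basis directly for the band basis $\{B(C)\}$ of $\mathrm{Sk}_q^\circ(\Sigma)$. Recall these properties with respect to every seed $t$ (equivalently, every ideal triangulation $\Delta$ of $\Sigma$, since $\Sigma$ is unpunctured and all seeds come from triangulations):
\begin{enumerate}[label=(\roman*)]
\item each element is bar-invariant;
\item it contains the cluster monomials of $t$;
\item it is pointed at its $g$-vector, i.e. has a unique maximal (or minimal) degree in the dominance order;
\item the product of a quantum cluster variable of $t$ with a basis element is $(\prec_t,\mathfrak m)$-unitriangular in the basis, with non-leading coefficients in $q^{-1/2}\mathbb{Z}[q^{-1/2}]$ (after a suitable normalization).
\end{enumerate}
By the uniqueness theorem for common triangular bases, any basis with these properties for all seeds is the common triangular basis. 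Also, it suffices to check (iv) for one seed in each mutation class, together with compatibility of pointedness under mutation (tropical transformation of $g$-vectors, which for laminations is realized by shear/intersection coordinates).

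The plan is as follows.

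\textbf{Step 1.} Bar-invariance of band elements is immediate from the Kauffman skein relation and the definition via Chebyshev polynomials $T_k$ applied to closed loops. Cluster monomials are products of compatible arcs, which are band elements. This gives (i) and (ii).

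\textbf{Step 2.} Pointedness. For a triangulation $\Delta$, expand a simple multicurve in the quantum torus of $\Delta$ via the state-sum (Bonahon--Wong quantum trace, or Musiker--Schiffler--Williams perfect matchings). The leading term is the unique state with all signs extremal. For a loop $\gamma$, this is preserved by $T_k(\gamma)$: its leading term is that of $\gamma^k$, while $\gamma^{k-2}$ etc. are strictly lower. Hence band elements are pointed at the tropical coordinates of the lamination. This gives (iii) and the parametrization by tropical points.

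\textbf{Step 3.} Triangularity. Let $x_\alpha$ be an arc in $\Delta$ and $C$ a weighted simple multicurve. Resolve $x_\alpha\cdot B(C)$ by the skein relation at each crossing of $\alpha$ with $C$. This produces $q^{\pm}$-weighted sums of multicurves; the all-"positive" resolution gives the leading term $[x_\alpha B(C)]$ with coefficient a power of $q^{1/2}$, fixed to $1$ after normalization. Other resolutions have strictly smaller degree and carry coefficients in $q^{-1/2}\mathbb Z[q^{-1/2}]$. We must then rewrite resulting multicurves, which may contain parallel loops and contractible pieces, in the band basis. Powers $\gamma^k$ expand as $T_k+\dots$ with positive integer lower coefficients and no $q$, so they stay inside $\mathbb{Z}[q^{-1/2}]$ relative to the leading term.

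\textbf{Main obstacle.} The main obstacle is Step 3 when $\alpha$ crosses a band of many parallel copies of a loop $\gamma$, i.e. $B(C)\ni T_k(\gamma)$. The product $x_\alpha T_k(\gamma)$ resolves into terms in which copies of $\gamma$ are cut and reglued, and unknots appear sitting between consecutive strands, the "necklace" configuration. I would handle this by induction on $k$ using $T_k=\gamma T_{k-1}-T_{k-2}$ and an explicit computation in the annulus (or a neighborhood of $\alpha\cup\gamma$). The aim is to show that the cancellations leave coefficients $q^{-m/2}$ with $m>0$ on every non-leading band element. Concretely, I would first do the Kronecker/annulus model, where the statement matches the known dual canonical basis. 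I would then localize: a general crossing pattern is a concatenation of annulus pieces, and degrees add. Having checked (iv) for every arc of every triangulation, the uniqueness of the common triangular basis concludes the proof.
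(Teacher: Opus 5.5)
Your overall architecture (check triangularity for every triangulation by resolving $x_\alpha\cdot C$, settle an annulus model, then localize) matches the paper's. However, Step 3 as stated is false, and the obstacle you single out is not the real one.

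Compare a resolution $\varepsilon$ with $k$ negative smoothings that produces $k$ unknots against $\varepsilon_+$. It carries the relative factor $q^{-2k}(-q^2-q^{-2})^k=(-1)^k(1+q^{-4})^k$. So it contributes at \emph{exactly} the top $q$-degree, not in $q^{-1/2}\mathbb{Z}[q^{-1/2}]$. By Muller's bound (number of unknots plus half the number of contractible arcs is at most the number of negative smoothings), these are precisely the competing top-degree terms, and they do occur.

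Your remedy does not fix this. Rewriting $\gamma^k=U_k+(k-1)U_{k-2}+\cdots$ and keeping coefficients \emph{inside} $\mathbb{Z}[q^{-1/2}]$ puts integer coefficients at the leading $q$-power on non-leading band elements, which triangularity forbids. The two defects cancel only if the top-$q$-degree part of $[x_\alpha][C]$ is exactly $q^{\gamma}[\mathsf{F}]\prod_\ell U_{w_\ell}(\ell)$, with $\ell$ running over the loop classes of the all-positive resolution $\mathsf{E}_{\varepsilon_+}$. That identity is the heart of the theorem, and your proposal does not establish it.

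These loops are typically \emph{created} by the resolution. Take $\mathbb{A}_{1,1}$, $\alpha$ the bridging arc, and $C$ an arc winding so as to cross $\alpha$ three times; $C$ has no loop at all. The top part of $[x_\alpha][C]$ is $q^2[\mathsf{b}_1][\mathsf{b}_2]([\ell]^2-1)$, where $\mathsf{b}_1,\mathsf{b}_2$ are the boundary arcs, and the $-1$ comes from the one-unknot resolution $(+,-,+)$.

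Conversely, the case you focus on, a loop $\gamma$ of $C$ crossing $\alpha$, is harmless. The lower Chebyshev terms have smaller intersection number with $\alpha$, hence strictly lower top $q$-degree. So induction on $k$ through the recursion for $B(C)$ never reaches the difficulty.

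Also make sure you mean $U_k$, with $U_2=z^2-1$. The first-kind $T_k$, with $T_2=z^2-2$, gives the bracelet basis, which already fails triangularity in this example.

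Three things are missing, and the paper supplies each.

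\textbf{(a) Characterizing top-degree resolutions.} These are the resolutions with $|\Unknots_\varepsilon|=|\varepsilon^{-1}(-)|$ and no contractible arcs. Muller's tubular-neighborhood tree argument then shows each such unknot has exactly two negative attachment points, i.e.\ is a cyclic or linear chain. Strip attaching and removing preserve top degree.

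\textbf{(b) Localizing to annuli.} Every such unknot lies in an annular neighborhood $\mathbb{A}_\ell$ of a loop class $\ell$ of $\mathsf{E}_{\varepsilon_+}$. The negative loci in different annuli can be chosen independently, so the top-degree sum factorizes over $\ell$. This factorization, not \emph{degrees add}, is what your concatenation of annulus pieces has to mean.

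\textbf{(c) Evaluating the local factors.} Each local factor lives in $\mathbb{A}_{m,m}$ with $m$ strands crossing the annulus, not only in $\mathbb{A}_{1,1}$. The paper avoids explicit computation here. $\mathbb{A}_{m,m}$ has an acyclic seed, so a common triangular basis exists. Its $kg_\ell$-pointed element is identified with $U_k(\ell)$ by a support/freezing argument that reduces to an $\mathbb{A}_{1,1}$ subsurface, together with the Berenstein--Zelevinsky Chebyshev recursion. Comparing that triangular expansion with the skein expansion forces the local top-degree sum to be $[\mathsf{F}_\ell]U_w(\ell)$.

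Finally, for loops of $C$ disjoint from $\alpha$, you need that $\mathsf{E}_{\varepsilon_+}$ creates no loop parallel to them (Lemma~\ref{lem:leading-term-no-new-loop}). Otherwise $U_aU_b\neq U_{a+b}$ and the leading term would not be a single band element.
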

Theorem \ref{thm:main} provides the existence of the common triangular basis for a new family of cluster algebras, which was previously known only for cluster algebras arising from Lie theory \cite{qin2023analogs}. It also allows us to visualize Kazhdan--Lusztig type basis elements using topological objects.

\begin{rem}
    The main result provides strong evidence for the existence of a monoidal category whose simple objects categorify the band basis. Indeed, the common triangular basis of a cluster algebra is typically categorified by the set of classes of simple objects in a suitable monoidal category, possibly after mild modification of the cluster algebra (a change of the frozen part) \cite{qin2017triangular, qin2023analogs}. In this light, the fact that the band basis coincides with the common triangular basis suggests that there might exist a monoidal category where the band elements correspond to its simple objects.
\end{rem}

In this paper, we often consider the following expansion in the skein algebra:
\begin{align}\label{eq:XY-decomp}
[\mathsf{X}][\mathsf{Y}]=\sum_\varepsilon q^{\alpha_\varepsilon}[\mathsf{E}_\varepsilon],    
\end{align}
where $\mathsf{X}$ and $\mathsf{Y}$ are weighted simple multicurves (Section \ref{sec:wt-simple-multi-curve}), $[\mathsf{X}]$ and $[\mathsf{Y}]$ are their homotopy classes, such that $[\mathsf{X}]$ is a cluster monomial (i.e., $\mathsf{X}$ only consists of arc components). The sum is over the choices $\varepsilon$ of smoothings at the crossings in $\mathsf{X} \cap \mathsf{Y}$. These two types of local smoothings, namely the positive and negative smoothings, are illustrated in Figure \ref{fig:skein-resolutions}.
\begin{figure}[H]
	\centering
	\begin{tikzpicture}[scale=1.5]
		\begin{scope}[xshift=-1in]
			\begin{scope}[xshift=-.5in,scale=.15]
				\draw[thick] (-2.83,-2.83) to (2.83,2.83);
				\draw[thick] (-2.83,2.83) to (-.71,.71);
				\draw[thick] (.71,-.71) to (2.83,-2.83);
									
			\end{scope}
			\node (=) at (0,0) {$\mapsto$};
			\begin{scope}[xshift=.5in,scale=.15]
				\draw[thick] (-2.83,-2.83) to [out=45,in=-45]  
				(-2.83,2.83);
				\draw[thick] (2.83,-2.83) to [out=135,in=-135] 
				(2.83,2.83);
			\end{scope}
		\end{scope}
		
		\begin{scope}[xshift=1in]
			\begin{scope}[xshift=-.5in,scale=.15]
				\draw[thick] (-2.83,-2.83) to (2.83,2.83);
				\draw[thick] (-2.83,2.83) to (-.71,.71);
				\draw[thick] (.71,-.71) to (2.83,-2.83);
			\end{scope}
			\node (=) at (0,0) {$\mapsto$};
			\begin{scope}[xshift=.5in,scale=.15]
				\draw[thick] (-2.83,-2.83) to [out=45,in=135]
			    (2.83,-2.83);
				\draw[thick] (-2.83,2.83) to [out=-45,in=-135]
				 (2.83,2.83);
			\end{scope}
		\end{scope}
	\end{tikzpicture}
	\caption{The $+$ smoothing (left) and $-$ smoothing (right) of a crossing.}
	\label{fig:skein-resolutions}
\end{figure}
For each $\varepsilon$, we have $\alpha_\varepsilon\in \mathbb{Z}$ and $\mathsf{E}_\varepsilon$ is the multicurve obtained after resolving the crossings. These multicurves might contain contractible components. More details are provided in Section \ref{subsec:attachment}.

In Theorem \ref{thm:leading-g}, we characterize the properties of the term $[\mathsf{E}_{\varepsilon_+}]$ in the expansion of $[\mathsf{X}][\mathsf{Y}]$ as a corollary of our main result, where $\varepsilon_+$ denotes the choice where all crossings receive positive smoothings. In particular, the following natural properties are of independent interest:
	$$\deg ([\mathsf{E}_{\varepsilon_+}])=\deg ([\mathsf{X}])+\deg ([\mathsf{Y}]),$$ 
  	 $$\Lambda(\deg ([\mathsf{X}]),\deg ([\mathsf{Y}]))=2\mu(\mathsf{X}, \mathsf{Y})+2a,$$
where $\mu(\mathsf{X}, \mathsf{Y})$ is the minimal intersection number and $a$ is the boundary intersection number in \eqref{eq:decompose-bd-intersection}.

In our approach to Theorem \ref{thm:main}, a key step is to understand the decomposition \eqref{eq:XY-decomp} where $[\mathsf{X}]$ is a cluster variable (i.e., $\mathsf{X}$ is an arc). In this case, we write $\mathsf{E}_\varepsilon=\Unknots_\varepsilon \cup \Unarcs_\varepsilon \cup  \widehat{\mathsf{E}}_\varepsilon$, where $\Unknots_\varepsilon$ is the set of unknots with cardinality $|\Unknots_\varepsilon|$, $\Unarcs_\varepsilon$ consists of contractible arcs, and $\widehat{\mathsf{E}}_\varepsilon$ contains no contractible components. By evaluating each unknot to $-q^2-q^{-2}$ and setting contractible arcs to $0$, we obtain
\begin{equation}\label{eq:decompose}
[\mathsf{X}][\mathsf{Y}] = \sum_{\varepsilon: \Unarcs_\varepsilon = \emptyset} q^{\alpha_\varepsilon} (-q^2 - q^{-2})^{|\Unknots_\varepsilon|} [\widehat{\mathsf{E}}_\varepsilon].
\end{equation}
It is crucial to show that the sum of the terms $(-1)^{|\Unknots_\varepsilon|} q^{\alpha_\varepsilon + 2|\Unknots_\varepsilon|} [\widehat{\mathsf{E}}_\varepsilon]$ with highest $q$-degree $\alpha_\varepsilon + 2|\Unknots_\varepsilon|$ in \eqref{eq:decompose} (cf. discussion near \eqref{eq:detail-dec}) forms a band basis element. Let $\mathcal{E}_{\mathrm{top}}$ denote the set of $\varepsilon$ corresponding to these terms with highest $q$-degree. 

We examine the structure of the resolved multicurves and discover a new phenomenon: the unknots (contractible loops) of any $\mathsf{E}_\varepsilon$ for $\varepsilon\in \mathcal{E}_{\mathrm{top}}$ form bead-chain configurations (see Example \ref{ex: bead-chain} for an illustration of such a configuration), where:
\begin{itemize}
    \item A {bead} is an isolated unknot.
    \item A {chain} is an unknot formed by the concatenation of multiple beads.
\end{itemize}
We introduce strip operations to describe how these chains merge or split when the smoothings are changed. This process is analogous to restringing beads on  necklaces. Applying strip operations to $\mathsf{E}_\varepsilon$ yields a new $\mathsf{E}_\varepsilon'$ where $\varepsilon'\in \mathcal{E}_{\mathrm{top}}$ still holds (Lemma \ref{lem:top-chains}). We refer the reader to Section \ref{sec:3-1} for details. In that section, we use Muller’s analysis of skein algebras via tubular neighborhoods \cite{muller2016skein}. This technique gives concise proofs of several important statements that would otherwise require more cumbersome arguments.

\begin{example}\label{ex: bead-chain}
      Consider the marked surface $\Sigma$ illustrated in Figure \ref{fig:surface_arc_multicurve_2}, where an arc $\mathsf{x}$ (red) and a multicurve $\mathsf{Y}$ (blue) form the superposition $\mathsf{x} \cdot \mathsf{Y}$.

      \begin{figure}[H]
   	\centering
   	\begin{tikzpicture}[scale=.4]
   		\begin{scope}
	        \draw[solid] (-11,0) arc (-180: 180 : 1 and 3);
	        \draw [solid] (-10,3) to (-9, 3)
	        to [out = 0, in= 270] (-8, 4) 
	        to (-8, 5);
	        
	        \draw [solid] (-8,5) arc (-180: 180 : 3 and 1);
	        
	        \draw [solid] (-2,5) to (-2, 4) 
	        to [out=270, in=180] (-1, 3) 
	        to (10, 3) arc (90: -90: 1 and 3)
	        to (9, -3) 
	        to [out=180, in=90] (8, -4)
	        to (8, -5); 
	        
	        \draw [solid] (2,-5) arc (-180: 180 : 3 and 1)
	        to (2,-4)
	        to [out=90, in=0] (1,-3)
	        to (-10,-3);
	        
	        \draw [dashed] (0,-3) arc (-90: 90: 1 and 3);
	        
	        \draw [thick,red] (-9,3) to [out=300, in=180] (-8, 1.5) to (10.85,1.5) node[circle, fill=red, inner sep=1pt] {};
	        \draw [thick,red] (-9,-3) to [out=60, in=180] (-8, -1.5) to (10.85,-1.5) node[circle, fill=red, inner sep=1pt] {};
	        \draw [dashed, red,thick] (-9,3) arc (90: 270: 1 and 3);
	        
	        \draw [thick, blue] (-6.5, 4.1) node[circle, fill=blue, inner sep=1pt] {} to [out=270, in=100] (-5.7, 2);
	        \draw [thick, blue] (-5.5, 1) to [out=-80, in=80] (-5.5, -1);
	        \draw [thick, blue] (-5.7, -2) to [out=-100, in=30] (-6.3, -3);
	        
	        \draw [thick, blue] (-3.5, 4.1) node[circle, fill=blue, inner sep=1pt] {} to [out=270, in=100] (-2.7, 2);
	        \draw [thick, blue] (-2.5, 1) to [out=-80, in=80] (-2.5, -1);
	        \draw [thick, blue] (-2.7, -2) to [out=-100, in=30] (-3.3, -3);
	        
	        \draw [thick, blue] (6.5, -4.1) node[circle, fill=blue, inner sep=1pt] {} to [out=90, in=-100] (7.3, -2);
	        \draw [thick, blue] (7.5, -1) to [out=80, in=-80] (7.5, 1);
	        \draw [thick, blue] (7.3, 2) to [out=100, in=-30] (6.5, 3);
	        
	        \draw [thick, blue] (3.5, -4.1) node[circle, fill=blue, inner sep=1pt] {} to [out=90, in=-100] (4.3, -2);
	        \draw [thick, blue] (4.5, -1) to [out=80, in=-80] (4.5, 1);
	        \draw [thick, blue] (4.3, 2) to [out=100, in=-30] (3.5, 3);
	        
	        \draw [dashed, blue, thick] (-6.3,-3) to [out=120, in =180] (3.5,3);
	        \draw [dashed, blue, thick] (-3.3,-3) to [out=120, in =190] (6.5,3);
        \end{scope})
    \end{tikzpicture}
    \caption{The marked surface $\Sigma$ with an arc $\mathsf{x}$ and a multicurve $\mathsf{Y}$.}
    \label{fig:surface_arc_multicurve_2}
   \end{figure}

  Applying the choice of smoothings $\varepsilon = (+, -, +, +, +, -, +, +)$ along the arc $\mathsf{x}$ (from top to bottom) yields the configuration in Figure~\ref{fig:linear_beads}, where the two unknot components are beads. Attaching a strip results in the chain illustrated in Figure~\ref{fig:cyclic_chain}.

   \begin{figure}[H]
		\centering
		\begin{tikzpicture}[scale=.4]
			\begin{scope}
				\draw[solid] (-11,0) arc (-180: 180 : 1 and 3);
				\draw [solid] (-10,3) to (-9, 3)
				to [out = 0, in= 270] (-8, 4) 
				to (-8, 5);
				
				\draw [solid] (-8,5) arc (-180: 180 : 3 and 1);
				
				\draw [solid] (-2,5) to (-2, 4) 
				to [out=270, in=180] (-1, 3) 
				to (10, 3) arc (90: -90: 1 and 3)
				to (9, -3) 
				to [out=180, in=90] (8, -4)
				to (8, -5); 
				
				\draw [solid] (2,-5) arc (-180: 180 : 3 and 1)
				to (2,-4)
				to [out=90, in=0] (1,-3)
				to (-10,-3);
				
				\draw [dashed] (0,-3) arc (-90: 90: 1 and 3);
				
				\draw [thick] (-6.5, 4.1) node[circle, fill=black, inner sep=1pt] {} to [out=270, in=0] (-6, 1.5);
			    \draw [thick] (-9,3) to [out=300, in=180] (-8, 1.5) to (-6,1.5);
			    \draw [dashed,thick] (-9,3) arc (90: 270: 1 and 3);
			    \draw [thick] (-9,-3) to [out=60, in=180] (-8, -1.5) to (-6,-1.5)
			    to [out=0, in=180] (-6,1)
			    to [out=0, in=270] (-3,2);
			    \draw [thick] (-3.5, 4.1) node[circle, fill=black, inner sep=1pt] {} to [out=270, in=90] (-3, 2);
			    
			    \draw [thick] (6.5, -4.1) node[circle, fill=black, inner sep=1pt] {} to [out=90, in=180] (7.3, -2)
			    to (10.75,-2) node[circle, fill=black, inner sep=1pt] {};
			    \draw [thick] (3.5, -4.1) node[circle, fill=black, inner sep=1pt] {} to [out=90, in=180] (4.3, -1.5)
			    to [out=0, in=270] (7.3,-1) 
			    to [out=90, in=180] (7.5, 1)
			    to (10.95,1) node[circle, fill=black, inner sep=1pt] {};
			    
			    \draw [thick, blue] (-5.7, -2.5) to [out=-100, in=30] (-6, -3);
			    \draw [thick, blue] (-5.7, -2.5)to [out=90, in=90] (-2.7, -2.5);
			    \draw [thick, blue] (-2.7, -2.5) to [out=-100, in=30] (-3, -3);
			    
			    \draw [thick, blue] (7.3, 2) to [out=100, in=-30] (6.5, 3);
			    \draw [thick, blue] (4.3, 2) to [out=100, in=-30] (3.5, 3);
                \draw [thick, blue] (7.3, 2)to [out=270, in=270] (4.3, 2);
                
                 \draw [dashed, blue, thick] (-6,-3) to [out=120, in =180] (3.5,3);
                \draw [dashed, blue, thick] (-3,-3) to [out=120, in =190] (6.5,3);
                
                \draw [thick, red] (-2.5,-1.5) to (3,-1.5)
                to [out=0, in=0] (3.5,1.5)
                to (-2.2, 1.5)
                to [out=180, in=180] (-2.5, -1.5);

			\end{scope})
		\end{tikzpicture}
            \caption{Beads}
            \label{fig:linear_beads}
	\end{figure}

        \begin{figure}[H]
		\centering
		\begin{tikzpicture}[scale=.4]
			\begin{scope}
				\draw[solid] (-11,0) arc (-180: 180 : 1 and 3);
				\draw [solid] (-10,3) to (-9, 3)
				to [out = 0, in= 270] (-8, 4) 
				to (-8, 5);
				
				\draw [solid] (-8,5) arc (-180: 180 : 3 and 1);
				
				\draw [solid] (-2,5) to (-2, 4) 
				to [out=270, in=180] (-1, 3) 
				to (10, 3) arc (90: -90: 1 and 3)
				to (9, -3) 
				to [out=180, in=90] (8, -4)
				to (8, -5); 
				
				\draw [solid] (2,-5) arc (-180: 180 : 3 and 1)
				to (2,-4)
				to [out=90, in=0] (1,-3)
				to (-10,-3);
				
				\draw [dashed] (0,-3) arc (-90: 90: 1 and 3);
				
				\draw [thick] (-6.5, 4.1) node[circle, fill=black, inner sep=1pt] {} to [out=270, in=0] (-6, 1.5);
				\draw [thick] (-9,3) to [out=300, in=180] (-8, 1.5) to (-6,1.5);
				\draw [dashed,thick] (-9,3) arc (90: 270: 1 and 3);
				\draw [thick] (-9,-3) to [out=60, in=180] (-8, -1.5) to (-6,-1.5)
				to [out=0, in=180] (-6,1)
				to [out=0, in=270] (-3,2);
				\draw [thick] (-3.5, 4.1) node[circle, fill=black, inner sep=1pt] {} to [out=270, in=90] (-3, 2);
				
				\draw [thick] (6.5, -4.1) node[circle, fill=black, inner sep=1pt] {} to [out=90, in=180] (7.3, -2)
				to (10.75,-2) node[circle, fill=black, inner sep=1pt] {};
				\draw [thick] (3.5, -4.1) node[circle, fill=black, inner sep=1pt] {} to [out=90, in=180] (4.3, -1.5)
				to [out=0, in=270] (7.3,-1) 
				to [out=90, in=180] (7.5, 1)
				to (10.95,1) node[circle, fill=black, inner sep=1pt] {};
				
				\draw [thick, red] (-5.7, -2.5) to [out=-100, in=30] (-6, -3);
				\draw [thick, red] (-2.7, -2.5) to [out=-100, in=30] (-3, -3);
				
				\draw [thick, red] (7.3, 2) to [out=100, in=-30] (6.5, 3);
				\draw [thick, red] (4.3, 2) to [out=100, in=-30] (3.5, 3);
				\draw [thick, red] (7.3, 2)to [out=270, in=270] (4.3, 2);
				
				\draw [dashed, red, thick] (-6,-3) to [out=120, in =180] (3.5,3);
				\draw [dashed, red, thick] (-3,-3) to [out=120, in =190] (6.5,3);
				
				\draw [thick, red] (-2.5,-1.5) to (3,-1.5)
				to [out=0, in=0] (3.5,1.5)
				to (-2.2, 1.5)
				to [out=180, in=0] (-4, -1.5)
				to [out=180, in=90] (-5.7, -2.5);
				
				\draw [thick, red] (-2.5,-1.5) to [out=180, in=90] (-2.7, -2.5);
				
			\end{scope})
		\end{tikzpicture}
            \caption{A chain}
            \label{fig:cyclic_chain}

	\end{figure}
  \end{example}

Moreover, each bead-chain configuration is constrained within an annular neighborhood $\mathbb{A}_\ell$, which contains all loop components of $\mathsf{E}_{\varepsilon_+}$ isotopic to $\ell$ (Lemma \ref{lem:regular-neighborhoods} and Lemma \ref{lem:null-loop-cyclic}). We prove that the configurations in distinct neighborhoods can be constructed independently (Proposition \ref{prop:indep-config}). Therefore, the construction of $\mathsf{E}_{\varepsilon_+}$ reduces to separate constructions with in these annuli. Within each annulus, the problem involves only a cluster algebra associated with a quiver of type $\mathbb{A}_{m,m}$, allowing this local construction to be completely understood (Section \ref{sec:annulus}).

It is natural to ask what patterns the unknots of $\mathsf{E}_\varepsilon$ might exhibit when $\varepsilon$ is not restricted to $\mathcal{E}_{\mathrm{top}}$. In {Appendix \ref{sec:nested}}, we describe one such phenomenon: certain unknots form a nested sequence.

\subsection{Convention}
Throughout this paper, we fix the following notations and assumptions:
\begin{itemize}
    \item \textbf{Quantum parameter}: We fix a formal variable $q^{1/2}$ and denote $\mathbb{Z}_q := \mathbb{Z}\big[q^{\pm 1/2 }\big]$.

    \item \textbf{Surfaces}: $\Sigma = (\mathcal{S}, \mathcal{M})$ is an unpunctured, compact, oriented surface where $\partial \mathcal{S} \neq \emptyset$ and each boundary component contains $\geq 1$ marked point. We exclude disks with $\leq 2$ marked points.

    \item \textbf{Skein elements}: $[\mathrm{X}]$ is the equivalence class of a link $\mathrm{X}$ in the skein algebra $\mathrm{Sk}_q(\Sigma)$. For a simple multicurve $\mathsf{X}$, we denote $\langle \mathsf{X} \rangle \coloneqq \mathrm{Band}(\mathsf{X})$ and $U(\mathsf{X}) \coloneqq \langle \mathsf{X} \rangle$ if $\mathsf{X}$ consists of loops.
    
    \item \textbf{Seeds}: $\Delta^+$ is the set of seeds mutation equivalent to an initial seed $\mathbf{s}_0$. 
    
    \item \textbf{Bar involution}: The involutive ring anti-automorphism $\overline{(\cdot)}$ on $\mathrm{Sk}_q(\Sigma)$ satisfies $q^{1/2} \mapsto q^{-1/2}$ and $[\mathrm{X}] \mapsto [\overline{\mathrm{X}}]$.

    \item \textbf{Figures}: Several figures in the following sections are adapted from the TikZ source code of Muller \cite{muller2016skein}.
\end{itemize}

\section{Preliminaries}
 	    \subsection{The skein algebra}\label{sec:skein-alg}
 	    In this section, we briefly review the fundamental concepts of skein algebras on marked surfaces. For detailed definitions and proofs, we refer the reader to \cite{muller2016skein}.
 	    
	    Let $\mathcal{S}$ be a compact oriented surface with boundary, and $\mathcal{M} \subset \partial\mathcal{S}$ a finite set of marked points. The pair $\Sigma= ( \mathcal{S}, \mathcal{M})$ is called a marked surface (unpunctured).
	
	    A \textbf{curve} $\mathsf{x}$ in $\mathcal{S}$ is an immersion $\mathsf{x}: C \rightarrow \mathcal{S}$ from a compact connected $1$-manifold $C$ to $\mathcal{S}$ such that the boundary of $C$ maps into $\mathcal{M}$ and the interior of $C$ does not map to $\mathcal{M}$ or $\partial \mathcal{S}$. Two curves are \textbf{homotopic} if they can be related by a homotopy respecting $\mathcal{M}$ and orientation reversal. An \textbf{arc} is a curve with endpoints in $\mathcal{M}$, and a \textbf{loop} is a closed curve with no endpoints. An arc is a \textbf{boundary arc} if it is homotopic to the closure of a component of $\partial \mathcal{S}\backslash \mathcal{M}$. A \textbf{multicurve} $\mathsf{X}$ is an unordered finite collection of (possibly homotopic) curves in $\mathcal{S}$. Two multicurves are \textbf{homotopic} if there is a bijection between their constituent curves which takes a curve to a homotopic one. 

	    A \textbf{strand} in a multicurve $\mathsf{X}$ at a point $p\in \mathcal{S}$ is a connected component of $\mathsf{X} \cap \mathcal{D}_p$, where $\mathcal{D}_p$ is a sufficiently small disk neighborhood of $p$. A multicurve $\mathsf{X}$ is \textbf{transverse} if the strands have distinct tangent directions at each intersection, and each interior intersection (a \textbf{crossing}) involves exactly two strands. Moreover, every multicurve is homotopic to a transverse multicurve.     
	
	    A transverse multicurve is \textbf{simple} if it has no crossings and no contractible curves. Here, a \textbf{contractible curve} is either a null-homotopic loop (an \textbf{unknot}) or an arc that bounds a disk in $\mathcal{S}$ (a \textbf{contractible arc}).
	
	    A \textbf{link} $\mathrm{X}$ is a transverse multicurve $\mathsf{X}$ equipped with an ordering (over/under) of the two strands at each crossing, as well as an equivalence relation and a total ordering on the resulting equivalence classes of strands at each $p \in \mathcal{M}$. A simple multicurve $\mathsf{X}$ can be regarded as a link with a simultaneous ordering at each endpoint, also denoted by $\mathsf{X}$. Homotopies between links are performed within the class of transverse multicurves, preserving crossing data. 
	
	    Let $\mathbb{Z}_q := \mathbb{Z}\big[q^{\pm 1/2 }\big]$. 
	    Denote by $\mathbb{Z}_q^{\mathrm{Links}}(\Sigma)$ the free $\mathbb{Z}_q$-module 
	    spanned by the equivalence classes of links in $\Sigma$. 
	    The skein module is defined as the quotient
	    \[
	    \mathrm{Sk}_q(\Sigma) := \mathbb{Z}_q^{\mathrm{Links}}(\Sigma) \big/ I,
	    \]
	    where $I$ is generated by the following relations:
        
        \begin{itemize}
        	\item The \textbf{Kauffman skein relation}
        	\begin{center}
        		\begin{tikzpicture}[scale=1.4]
        			\path[use as bounding box] (-1.15in,-.3in) rectangle (1.15in,.3in);
        			\begin{scope}[xshift=-.85in,scale=.15]
        				\draw[thick] (-2.83,-2.83) to (2.83,2.83);
        				\draw[thick] (-2.83,2.83) to (-.71,.71);
        				\draw[thick] (.71,-.71) to (2.83,-2.83);
        			\end{scope}
        			\node (=) at (-.5in,0) {$=$};
        			\node (q) at (-.375in,0) {$q$};
        			\begin{scope}[xshift=-.05in,scale=.15]        		
        				\draw[thick] (-2.83,-2.83) to [out=45,in=-45] (-2.83,2.83);
        				\draw[thick] (2.83,-2.83) to [out=135,in=-135] (2.83,2.83);
        			\end{scope}
        			\node (+) at (.3in,0) {$+$};
        			\node (q') at (.5in,.02in) {$q^{-1}$};
        			\begin{scope}[xshift=.85in,scale=.15]
        				\draw[thick] (-2.83,-2.83) to [out=45,in=135] (2.83,-2.83);
        				\draw[thick] (-2.83,2.83) to [out=-45,in=-135] (2.83,2.83);
        			\end{scope}
        		\end{tikzpicture}
        	\end{center}
            The first link on the right is called the \textbf{positive smoothing} of the crossing, and the second is called the \textbf{negative smoothing}. 
        	
        	\item The \textbf{boundary skein relation}
        	\begin{center}
        		\begin{tikzpicture}[scale=1.4]
        			\path[use as bounding box] (-.8in,-.3in) rectangle (1.6in,.3in);
        			\node (q) at (-.625in,0.04in) {$q^{-1/2}$};
        			\begin{scope}[xshift=-.25in,scale=.15]	    	
        				\clip (0,0) circle (4);
        				\draw[thick] (-5,-3) to [in=180,out=30] (0,-2) to [in=150,out=0] (5,-3) to [line to] (5,5) to (0,5) to (-5,5);
        				\node (1) at (0,-2) [marked] {};
        				\draw[thick] (1) to (4,3);
        				\draw[thick] (-.8,-1) to (-4,3);
        			\end{scope}
        			\node (=) at (.1in,0) {$=$};
        			\begin{scope}[xshift=.45in,scale=.15]
        				\clip (0,0) circle (4);
        				\draw[thick] (-5,-3) to [in=180,out=30] (0,-2) to [in=150,out=0] (5,-3) to [line to] (5,5) to (0,5) to (-5,5);
        				\node (1) at (0,-2) [marked] {};
        				\draw[thick] (1) to (4,3);
        				\draw[thick] (1) to (-4,3);
        			\end{scope}
        			\node (=) at (.8in,0) {$=$};
        			\node (q) at (.975in,0.04in) {$q^{1/2}$};
        			\begin{scope}[xshift=1.3in,scale=.15]
        				\clip (0,0) circle (4);
        				\draw[thick] (-5,-3) to [in=180,out=30] (0,-2) to [in=150,out=0] (5,-3) to [line to] (5,5) to (0,5) to (-5,5);
        				\node (1) at (0,-2) [marked] {};
        				\draw[thick] (1) to (-4,3);
        				\draw[thick] (.8,-1) to (4,3);
        			\end{scope}
        		\end{tikzpicture}
        	\end{center}
            The boundary skein relation describes the exchange of strands at a marked point. 
        	
        	\item The \textbf{value of the unknot}
        	\begin{center}
        		\begin{tikzpicture}[scale=1.5]
        			\path[use as bounding box] (-1.15in,-.3in) rectangle (.9in,.3in);
        			\begin{scope}[xshift=-.85in,scale=.15]
        				\draw[thick] (0,0) circle (2);
        			\end{scope}
        			\node (=) at (-.5in,0) {$=$};
        			\node (2) at (-.05in,.02in) {$-(q^2+q^{-2})$};
        			\begin{scope}[xshift=.6in,scale=.15]
        			\end{scope}
        		\end{tikzpicture}
        	\end{center}
        	
        	\item The \textbf{value of a contractible arc}
        	\begin{center}
        		\begin{tikzpicture}[scale=1.5]
        			\path[use as bounding box] (-1.4in,-.3in) rectangle (.875in,.3in);
        			\begin{scope}[xshift=-1.1in,scale=.15]
        				\clip (0,0) circle (4);
        				\draw[thick] (-5,-3) to [in=180,out=30] (0,-2) to [in=150,out=0] (5,-3) to [line to] (5,5) to (0,5) to (-5,5);
        				\node (1) at (0,-2) [marked] {};
        				\draw[thick] (1) to [out=45,in=0] (0,2) to [out=180,in=135] (-.8,-1);
        			\end{scope}
        			\node (='') at (-.75in,0) {$=$};
        			\begin{scope}[xshift=-.4in,scale=.15]
        				\clip (0,0) circle (4);
        				\draw[thick] (-5,-3) to [in=180,out=30] (0,-2) to [in=150,out=0] (5,-3) to [line to] (5,5) to (0,5) to (-5,5);
        				\node (1) at (0,-2) [marked] {};
        				\draw[thick] (1) to [out=45,in=0] (0,2) to [out=180,in=135] (1);
        			\end{scope}
        			\node (=) at (-.05in,0) {$=$};
        			\begin{scope}[xshift=.3in,scale=.15]
        				\clip (0,0) circle (4);
        				\draw[thick] (-5,-3) to [in=180,out=30] (0,-2) to [in=150,out=0] (5,-3) to [line to] (5,5) to (0,5) to (-5,5);
        				\node (1) at (0,-2) [marked] {};
        				\draw[thick] (1) to [out=135,in=180] (0,2) to [out=0,in=45] (.8,-1);
        			\end{scope}
        			\node (=') at (.65in,0) {$=$};
        			\node (0) at (.775in,.01in) {$0$};
        		\end{tikzpicture}
        	\end{center}
        \end{itemize}
	    
	    For a link $\mathrm{X}$, its equivalence class in $\mathrm{Sk}_q(\Sigma)$ is denoted by $[\mathrm{X}]$. The $\mathbb{Z}_q$-module $\mathrm{Sk}_q(\Sigma)$ admits a $\mathbb{Z}_q$-bilinear, non-commutative superposition product. Let $\mathrm{X}$ and $\mathrm{Y}$ be links such that the union of the underlying  multicurves $\mathsf{X} \cup \mathsf{Y}$ is transverse. Define the \textbf{superposition} $\mathrm{X}\cdot \mathrm{Y}$ to be the link with underlying multicurve $\mathsf{X} \cup \mathsf{Y}$, where at each new crossing, the strand of $\mathrm{X}$ crosses over that of $\mathrm{Y}$, and all other crossings retain their original ordering from $\mathrm{X}$ and $\mathrm{Y}$. Since $[\mathrm{X} \cdot \mathrm{Y}]$ depends only on the homotopy classes of $\mathrm{X}$ and $\mathrm{Y}$, the \textbf{superposition product} is defined on equivalence classes by:
	     $$[\mathrm{X}][\mathrm{Y}]:=[\mathrm{X}'\cdot \mathrm{Y}'],$$
	     where $\mathrm{X}' \sim \mathrm{X}$ and $\mathrm{Y}' \sim \mathrm{Y}$ are representatives such that $\mathrm{X}'\cup \mathrm{Y}'$ transverse. 
	    
	    This product endows $\mathrm{Sk}_q(\Sigma)$ with the structure of an associative $\mathbb{Z}_q$-algebra with unit $[\emptyset]$ (the empty link), called the \textbf{Kauffman skein algebra} of $\Sigma$. The \textbf{localized skein algebra} $\mathrm{Sk}^{\circ}_q(\Sigma)$ 
	    is the Ore localization of $\mathrm{Sk}_q(\Sigma)$ 
	    at the multiplicative set generated by the boundary arcs.
	
	    Given a link $\mathrm{X}$, let $\overline{\mathrm{X}}$ denote the link 
	    with the same underlying multicurve but with all crossing data reversed. There exists an involutive ring anti-automorphism $\overline{(\cdot)}: \mathrm{Sk}_q(\Sigma) \to \mathrm{Sk}_q(\Sigma)$, called the \textbf{bar involution}, defined by:
	    \begin{align*}
	    	q^{1/2} &\mapsto q^{-1/2}, \\
	    	[\mathrm{X}] &\mapsto [\overline{\mathrm{X}}],
	    \end{align*}
	    for all links $\mathrm{X}$. Elements fixed by the bar involution are called \textbf{bar-invariant}.
	 
	    \subsection{Weighted simple multicurves and bangles} \label{sec:wt-simple-multi-curve}
	    Let $\mathrm{SMulti}(\Sigma)$ denote the set of homotopy classes of simple multicurves. For any simple multicurve $\mathsf{X}$, there exist finitely many pairwise non-homotopic and non-intersecting simple curves $\mathsf{x}_1,\ldots,\mathsf{x}_k$ such that $\mathsf{X}$ is homotopic to the union $\bigcup_{i=1}^k w_i \mathsf{x}_i$ for some integers $w_i \geq 1$. We write $\mathsf{X} = \bigcup_{i=1}^k w_i \mathsf{x}_i$ to denote this decomposition.
        

        A \textbf{weighted crossingless multicurve} is a collection of pairwise non-homotopic and non-intersecting curves $\mathsf{x}_1,\ldots,\mathsf{x}_k$, each assigned an integer weight $w_i$, such that the weights on non-boundary curves are positive. We identify weight-$0$ curves with the empty set $\emptyset$. It is further called a \textbf{weighted simple multicurve} if all components are non-contractible. Denote by $\mathrm{SMulti}^\circ(\Sigma)$ the set of homotopy classes of weighted simple multicurves.
	    
	    Any simple multicurve $\mathsf{X} = \bigcup_{i=1}^k w_i \mathsf{x}_i$ defines an element $[\mathsf{X}] \in \mathrm{Sk}_q(\Sigma)$ given by:
	    $$
	    [\mathsf{X}] := q^{\lambda/2} \prod_{i=1}^k [\mathsf{x}_i]^{w_i},
	    $$
	    where $q^{\lambda/2}$ is the unique power of $q$ such that $[\mathsf{X}]$ is bar-invariant. This definition extends naturally to weighted multicurves.
	 
	    \begin{dfn}[Bangle]
	    	For each weighted simple multicurve $\mathsf{X}=\bigcup_{i=1}^kw_i \mathsf{x}_i$, the element $[\mathsf{X}] \in \mathrm{Sk}_q ^\circ(\Sigma)$ is called the \textbf{bangle} element associated with $\mathsf{X}$ (or simply a \textbf{bangle}), denoted by $\mathrm{Bang}(\mathsf{X}) := [\mathsf{X}]$.
	    \end{dfn}
	    
	    Let $\mathrm{Bang}_q(\Sigma)$ denote the set of bangles with non-negative weights, 
	    and $\mathrm{Bang}_q^\circ(\Sigma)$ the set of all bangles.
        
	    \begin{lem}[{\cite[Proposition 4.10]{thurston2014positive}}, {\cite[Lemma 4.1]{muller2016skein}}] \label{lem:bangle-bases}
	    	The set of bangles $\mathrm{Bang}_q(\Sigma)$ forms a $\mathbb{Z}_q$-basis for $\mathrm{Sk}_q(\Sigma)$. Similarly, $\mathrm{Bang}_q^\circ(\Sigma)$ is a $\mathbb{Z}_q$-basis for $\mathrm{Sk}_q^\circ(\Sigma)$.
	    \end{lem}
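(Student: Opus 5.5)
The plan is to prove spanning and linear independence separately for $\mathrm{Bang}_q(\Sigma)$, and then pass to the localization.

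For \emph{spanning}, every element of $\mathrm{Sk}_q(\Sigma)$ is a $\mathbb{Z}_q$-combination of classes $[\mathrm{X}]$ of links. I would induct on the number of crossings of $\mathrm{X}$. If $\mathrm{X}$ has a crossing, the Kauffman skein relation writes $[\mathrm{X}]$ as $q[\mathrm{X}_+]+q^{-1}[\mathrm{X}_-]$, where each smoothing has strictly fewer crossings. Once there are no crossings, I would apply the following reductions:
\begin{itemize}
\item the boundary skein relation to put all strands at each marked point in simultaneous position, at the cost of a power of $q^{1/2}$;
\item the unknot relation to remove null-homotopic loops;
\item the contractible-arc relation to kill the term whenever a contractible arc is present.
\end{itemize}
One caveat is that the crossing count alone need not terminate if a homotopy is needed to make the link transverse and simple. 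To guard against this, I would use the standard fact that a crossingless curve with no bigons or monogons is in minimal position, and that bigons can be removed by Reidemeister II/III moves, which hold in $\mathrm{Sk}_q$ as consequences of the Kauffman relation together with the unknot value. The result is a simple multicurve $\mathsf{X}=\bigcup w_i\mathsf{x}_i$. Its class $[\mathsf{X}]$ agrees with $\prod[\mathsf{x}_i]^{w_i}$ up to a power of $q^{1/2}$, because disjoint curves commute up to such powers coming from the boundary relation. Hence $[\mathsf{X}]$ is a unit multiple of a bangle.

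For \emph{linear independence}, I would build a $\mathbb{Z}_q$-linear functional that separates simple multicurves. The approach I would try first is Muller's quantum trace style embedding: fix an ideal triangulation $\Delta$ of $\Sigma$ (possible since the disks with $\le 2$ marked points are excluded). Map $\mathrm{Sk}_q(\Sigma)$ into the quantum torus of $\Delta$, in which the arcs of $\Delta$ quasi-commute. The key point is that, with respect to a suitable order, the leading term of a bangle $[\mathsf{X}]$ is a monomial whose exponent is given by the intersection numbers of $\mathsf{X}$ with the arcs of $\Delta$ (Dehn--Thurston/shear-type coordinates). Distinct simple multicurves have distinct such coordinate vectors, so the images have distinct leading exponents and are therefore independent.

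The main obstacle is this leading-term claim. It requires checking that, when a simple curve is expanded by resolving its crossings with $\Delta$, the all-positive smoothing contributes a single monomial of maximal degree that is never cancelled. It also requires the injectivity of the intersection-number coordinates on $\mathrm{SMulti}(\Sigma)$ for unpunctured surfaces, which I would take from the standard classification of laminations.

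Finally, for $\mathrm{Sk}_q^\circ(\Sigma)$, boundary arcs are disjoint from every curve and $q$-commute with everything, so the Ore localization is spanned by bangles times monomials in the inverted boundary arcs. Such a product is exactly a bangle in which boundary curves carry arbitrary integer weight. Independence then follows by clearing denominators, that is, multiplying by a large monomial in the boundary arcs, which shifts weights injectively, and applying the independence just proved.
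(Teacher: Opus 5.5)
Your spanning step is fine. The Reidemeister caveat is unnecessary: after resolving every crossing and deleting innermost unknots and contractible arcs, the diagram is simple by definition. The passage to $\mathrm{Sk}_q^\circ(\Sigma)$ is also fine once independence is known. Clearing denominators tacitly uses injectivity of $\mathrm{Sk}_q(\Sigma)\to\mathrm{Sk}_q^\circ(\Sigma)$, which holds because multiplying by a boundary arc sends distinct simple multicurves to distinct ones up to units.

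The gap is linear independence, which is the entire content of the lemma. You never actually construct the map to a quantum torus. The torus in which the arcs of $\Delta$ quasi-commute receives $\mathrm{Sk}_q(\Sigma)$ through Muller's embedding into $\mathrm{Sk}_q(\Sigma)[\Delta^{-1}]$. But injectivity of that map (arcs of $\Delta$ are not zero divisors) and the identification of the localization with a quantum torus (monomials in $\Delta$ are independent) are both special cases of the statement itself. Defining the map instead by resolving against $\Delta$ requires proving the result is independent of choices, which is the same difficulty.

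Moreover, your leading-term claim is false in that torus. In a quadrilateral with $\Delta=\{x,a,b,c,d\}$, the other diagonal $x'$ becomes $[x]^{-1}(q^{\alpha}[a][c]+q^{\beta}[b][d])$. Neither exponent is $\pm(1,0,0,0,0)$, the intersection vector of $x'$. Intersection numbers with $\Delta$ govern denominators, while the natural leading exponent there is the $g$-vector for $\prec_{\mathbf{s}}$. The intersection vector also vanishes on every monomial in arcs of $\Delta$ (on $[x]$, $[a]$ and $1$, say), so it cannot separate simple multicurves.

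The Bonahon--Wong quantum trace, whose top exponent really is the intersection vector, lands in a different (shear-coordinate) torus and is built for closed links on punctured surfaces. Extending it to Muller's setting is a substantial theorem you would have to import. Finally, ``the all-positive smoothing gives an uncancelled top term'' is essentially Theorem~\ref{thm:leading-g}. This paper obtains that only at the very end, from machinery that presupposes the present lemma.

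The paper itself only cites Thurston and Muller. The standard argument behind such statements (going back to Przytycki for surfaces without marked points) needs no target torus. Define a $\mathbb{Z}_q$-linear map from the free module on link diagrams to $\mathbb{Z}_q[\mathrm{SMulti}(\Sigma)]$ by the Kauffman state sum:
\begin{itemize}
\item resolve every crossing with weights $q^{\pm1}$;
\item normalize orderings at marked points by the powers of $q^{1/2}$ dictated by the boundary relation;
\item replace each unknot by $-q^2-q^{-2}$;
\item send every state containing a contractible arc to $0$.
\end{itemize}
Then check that this map is invariant under equivalence of links and annihilates the defining relations. Equivalently, show that the rewriting system given by the relations is terminating and locally confluent (diamond lemma). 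The only checks beyond the unmarked case occur at marked points. The map then descends to $\mathrm{Sk}_q(\Sigma)$ and is a left inverse of the natural map $\mathbb{Z}_q[\mathrm{SMulti}(\Sigma)]\to\mathrm{Sk}_q(\Sigma)$, which is precisely the independence you need.
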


        By Lemma \ref{lem:bangle-bases}, the homotopy classes of multicurves form a $\mathbb{Z}_q$-basis. We shall also consider their $q^{\mathbb{Z}}$-shifts as a $\mathbb{Z}$-basis. For convenience, we refer to this $\mathbb{Z}$-basis as the \textbf{shifted multicurve basis}. 
	    
	    \subsection{Bands}\label{sec:band}
	    The \textbf{Chebyshev polynomials of the second kind} $\{U_k(z)\}_{k \in \mathbb{N}}$ are defined recursively by:
	    $$U_0(z)=1,$$ 
	    $$U_1(z)=z,$$
	    $$U_{k+1}(z)=zU_k(z)-U_{k-1}(z).$$
	    
	    These polynomials satisfy the following identities for all $k, l \in \mathbb{N}$:
	    $$U_k(z)U_l(z)=U_{k+l}(z)+U_{k+l-2}(z)+\cdots+U_{|k-l|}(z), $$
	    $$U_k(e^x+e^{-x})=e^{kx}+e^{(k-2)x}+\cdots+e^{-kx}.$$
	
	    \begin{defn}[Band]
	    	\label{def:band-element}
	        Given a weighted simple multicurve $\mathsf{X} = \bigcup_{i=1}^k w_i \mathsf{x}_i$, the \textbf{band} element $\mathrm{Band}(\mathsf{X})$ is defined as
	        \[
	            \mathrm{Band}(\mathsf{X}) := q^{\lambda/2} \prod_{i=1}^k \mathrm{Band}^{w_i}(\mathsf{x}_i),
	        \]
	    	where:
	    	\begin{enumerate}
	    		\item $q^{\lambda/2}$ is the unique power of $q$ such that $\mathrm{Band}(\mathsf{X})$ is {bar-invariant}. 
	    		
	    		\item Each factor $\mathrm{Band}^{w_i}(\mathsf{x}_i)$ is determined by the type of the curve $\mathsf{x}_i$:
	    		\begin{itemize}
	    			\item If $\mathsf{x}_i$ is a simple arc, then $\mathrm{Band}^{w_i}(\mathsf{x}_i) = [\mathsf{x}_i]^{w_i}$.
	    			
	    			\item If $\mathsf{x}_i$ is a simple loop, then $\mathrm{Band}^{w_i}(\mathsf{x}_i) = U_{w_i}([\mathsf{x}_i])$.
	    		\end{itemize}
	    	\end{enumerate}
	    \end{defn}
            For simplicity, we denote $\langle \mathsf{X}\rangle:=\mathrm{Band}(\mathsf{X})$. When $\mathsf{X}$ consists only of loop components, we also write $U(\mathsf{X}):=\langle \mathsf{X} \rangle$.
        
	    Denote the sets of band elements in $\mathrm{Sk}^\circ_q(\Sigma)$ by:
	    \begin{align*}
	    	\mathrm{Band}_q(\Sigma) & := \left\{ \langle\mathsf{X}\rangle \in \mathrm{Sk}^\circ_q(\Sigma) \mid \mathsf{X} \in \mathrm{SMulti}(\Sigma) \right\},
	    	\\
	    	\mathrm{Band}^\circ_q(\Sigma) & := \left\{ \langle\mathsf{X}\rangle \in \mathrm{Sk}^\circ_q(\Sigma) \mid \mathsf{X} \in \mathrm{SMulti}^\circ(\Sigma) \right\}.
	    \end{align*}
	    
	    \begin{lem}[{\cite[Proposition 4.10]{thurston2014positive}}] \label{lem:band-bases}
	    	The set $\mathrm{Band}_q(\Sigma)$ forms a $\mathbb{Z}_q$-basis for $\mathrm{Sk}_q(\Sigma)$. Similarly, $\mathrm{Band}^\circ_q(\Sigma)$ is a $\mathbb{Z}_q$-basis for $\mathrm{Sk}^\circ_q(\Sigma)$.
	    \end{lem}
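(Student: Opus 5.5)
The plan is to deduce the band-basis statement from Lemma \ref{lem:bangle-bases} by a unitriangular change of basis. The bridge is the Chebyshev recursion in Definition \ref{def:band-element}: for a simple loop $\mathsf{x}$, $U_w([\mathsf{x}])$ is a monic polynomial of degree $w$ in $[\mathsf{x}]$ with integer coefficients, namely
\[
U_w([\mathsf{x}]) = [\mathsf{x}]^w + \sum_{j<w,\ j\equiv w \ (2)} c_{w,j}\,[\mathsf{x}]^j, \qquad c_{w,j}\in\mathbb{Z}.
\]
Conversely, $[\mathsf{x}]^w$ is a $\mathbb{Z}$-combination of $U_j([\mathsf{x}])$ with $j\le w$, again with leading coefficient $1$. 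This inverse expansion follows from the product identity $U_kU_l=\sum U_{k+l-2i}$ by induction on $w$.

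Fix a weighted simple multicurve $\mathsf{X}=\bigcup_i w_i\mathsf{x}_i$. The components $\mathsf{x}_i$ are pairwise disjoint, so the elements $[\mathsf{x}_i]$ commute up to powers of $q^{1/2}$. In fact loops commute with everything disjoint from them, and disjoint arcs $q$-commute via the boundary skein relation. Expanding $\prod_i U_{w_i}([\mathsf{x}_i])\prod_j[\mathsf{x}_j]^{w_j}$ therefore gives
\[
\langle \mathsf{X}\rangle=[\mathsf{X}]+\sum_{\mathsf{X}'<\mathsf{X}} a_{\mathsf{X}\mathsf{X}'}[\mathsf{X}'],\qquad a_{\mathsf{X}\mathsf{X}'}\in\mathbb{Z}_q .
\]
Here $\mathsf{X}'$ ranges over the weighted simple multicurves with the same underlying curves as $\mathsf{X}$ and weights $w'_i\le w_i$, equal on arc components and with strict inequality on at least one loop component. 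The $q$-power normalizations are harmless. Both $\langle\mathsf{X}\rangle$ and $[\mathsf{X}']$ are normalized to be bar-invariant. Each product of a monomial in disjoint curves differs from the bar-invariant bangle by a unit $q^{m/2}$, so the coefficients stay in $\mathbb{Z}_q$. The leading coefficient is exactly $1$, since the leading terms of $\langle\mathsf{X}\rangle$ and $[\mathsf{X}]$ are the same monomial with the same bar-invariant normalization.

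Now define the partial order on $\mathrm{SMulti}(\Sigma)$ by $\mathsf{X}'\le\mathsf{X}$ if $\mathsf{X}'$ is obtained from $\mathsf{X}$ by lowering loop weights. Every element has only finitely many predecessors. The transition matrix from bangles to bands is therefore unitriangular with respect to a locally finite order, so it is invertible over $\mathbb{Z}_q$. Its inverse, from the inverse Chebyshev expansion, is also unitriangular with finite rows. Together with Lemma \ref{lem:bangle-bases}, this gives that $\mathrm{Band}_q(\Sigma)$ spans $\mathrm{Sk}_q(\Sigma)$ and is linearly independent.

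The localized case is identical. Boundary arcs are arcs, so negative weights only occur on arc components, where the band and bangle factors coincide. Hence the triangularity argument applies verbatim to $\mathrm{Bang}^\circ_q(\Sigma)$ and $\mathrm{Band}^\circ_q(\Sigma)$.

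The main point requiring care is the $q$-commutation and normalization bookkeeping. I would verify it by noting that loops are central among disjoint curves, so only arc monomials need reordering, and those reorderings are identical on both sides.
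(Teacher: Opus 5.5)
Your proposal is correct and follows essentially the same route as the source: the paper gives no proof of its own and simply cites Thurston's Proposition 4.10, which obtains the band basis from the bangle basis by a unitriangular change of basis, using that each $U_w$ is monic with integer coefficients. Your extra bookkeeping is exactly what is needed: loops commute with the disjoint curves, so the bar-invariant normalizations of $\langle\mathsf{X}\rangle$ and of all the lower bangles agree, and in the localized case negative weights occur only on boundary arcs, where the band and bangle factors coincide.
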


           \subsection{Quantum cluster algebras}
	        A \textbf{quantum seed} of skew-symmetric type in a skew-field $\mathcal{F}$ is a triple $(B, \Lambda, M)$, where:
	        \begin{itemize}
	        	\item The \textbf{exchange matrix} $B$ is an $N \times \mathbf{ex}$ integer matrix for a subset $\mathbf{ex} \subset \{1, \ldots, N \}$, such that the \textbf{principal part} $\pi B$ is skew-symmetric, where $\pi$ is the $\mathbf{ex} \times N$ matrix projecting $\mathbb{Z}^N$ onto $\mathbb{Z}^{\mathbf{ex}}$.
	        	
	        	\item The \textbf{compatibility matrix} $\Lambda$ is an $N \times N$ skew-symmetric integer matrix satisfying the \textbf{compatibility condition} $\Lambda B = D \iota$, where $\iota$ is the $N \times \mathbf{ex}$ matrix that includes $\mathbb{Z}^{\mathbf{ex}}$ into $\mathbb{Z}^N$, and $D= \mathrm{diag}(d_{11}, \ldots, d_{NN})$ is a diagonal matrix with entries $d_{ii} > 0$.
	        	
	        	\item $ M: \mathbb{Z}^N \to \mathcal{F} \setminus \{0\} $ satisfies the multiplicative relation  
	        	$$M(\alpha) M(\beta) =q^{\frac{1}{2} \Lambda (\alpha, \beta )} M(\alpha + \beta)$$ such that the $\mathbb{Z}_q$-subalgebra generated by $M(\mathbb{Z}^N)\subset \mathcal{F}$ forms a \textbf{quantum torus} $\mathbb{T}_\Lambda$ defined by $\Lambda$, whose skew-field of fractions is $\mathcal{F}$. 
	        \end{itemize} 

	        A quantum seed $(B', \Lambda', M')$ in $\mathcal{F}$ is the \textbf{mutation} at $i \in \mathbf{ex}$ of $(B, \Lambda, M)$ if:
	        
	        \begin{itemize}
	        	\item The \textbf{exchange relation} holds:
	        	\begin{equation*}
	        		b'_{jk} = 
	        		    \begin{cases}
	        		    	-b_{jk}  &\text{if}\ i =j\ \text{or}\ i=k, \\
	        		    	b_{jk} + \frac{1}{2} (|b_{ji}| b_{ik} + b_{ji} |b_{ik}|) & \text{otherwise}.
	        		    \end{cases}
	        	\end{equation*}
	        	\item $M(\alpha) =M'(\alpha)$ for all $\alpha \in \mathbb{Z}^N$ satisfying $\alpha_i=0$.
	        	\item The \textbf{quantum cluster relation} holds:
	        	\[
	        	    M'(e_i) = M \left( -e_i + \sum_{j:b_{ji}>0} b_{ji} e_j \right)+ M \left(-e_i - \sum_{j:b_{ji}<0} b_{ji}e_j \right).
	        	\] 
	        \end{itemize}
	        
	        Two quantum seeds $(B, \Lambda, M)$ and $(B', \Lambda', M')$ are \textbf{mutation equivalent} if they can be related by a finite sequence of mutations and permutations of indices. An element of the form $M'(e_i)\in \mathcal{F}$ is called a \textbf{cluster variable} of $(B, \Lambda, M)$ if $(B', \Lambda', M')$ is mutation equivalent to $(B, \Lambda, M)$. For such a variable $M'(e_i)$:
	        
	        \begin{itemize}
	        	\item It is \textbf{mutable} if $i\in \mathbf{ex}$.
	        	\item It is \textbf{frozen} if $i \in \{1, \dots, N \} \setminus \mathbf{ex}$.
	        \end{itemize} 
	        
	        The \textbf{quantum cluster algebra} $\mathcal{A}_q$ associated with $(B, \Lambda, M)$ is the $\mathbb{Z}_q$-subalgebra of $\mathcal{F}$ generated by the cluster variables, together with the inverses of the frozen variables. The \textbf{quantum upper cluster algebra} $\mathcal{U}_q$ of a quantum seed $(B, \Lambda, M)$ is defined as the intersection of the quantum tori over all quantum seeds $(B', \Lambda', M')$ mutation equivalent to $(B, \Lambda, M)$, i.e., 
	        \[
	            \mathcal{U}_q =\bigcap _{(B', \Lambda', M') \sim (B, \Lambda, M)} \mathbb{Z}_q  M'(\mathbb{Z}^N). 
	        \]
	        
	        \begin{thm}[\cite{BerensteinZelevinsky05}]
                For any quantum seed $(B, \Lambda, M)$, we have the inclusion $$\mathcal{A}_q \subseteq \mathcal{U}_q.$$     	 
	        \end{thm}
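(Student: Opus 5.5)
The statement is the Berenstein--Zelevinsky quantum Laurent phenomenon, $\mathcal{A}_q\subseteq\mathcal{U}_q$. Since the frozen variables and their inverses are monomials in every quantum torus, it suffices to show that every cluster variable $M'(e_i)$, for any seed $(B',\Lambda',M')$ mutation equivalent to $(B,\Lambda,M)$, lies in $\mathbb{Z}_q M''(\mathbb{Z}^N)$ for every other seed $(B'',\Lambda'',M'')$ in the class. Because mutation equivalence is symmetric and transitive, I would reduce this to the following statement. For any seed $\Sigma$, the \emph{local upper bound}
\[
\mathcal{U}(\Sigma)=\mathbb{T}_\Sigma\cap\bigcap_{k\in\mathbf{ex}}\mathbb{T}_{\mu_k\Sigma}
\]
is invariant under mutation, so that $\mathcal{U}(\Sigma)=\mathcal{U}(\mu_j\Sigma)$ for all $j$. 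Granting this, $\mathcal{U}(\Sigma)$ is the same for all seeds in the class, hence equals $\mathcal{U}_q$. Every cluster variable of a seed $\Sigma'$ lies in $\mathcal{U}(\Sigma')$: it is a generator of $\mathbb{T}_{\Sigma'}$, and the exchange relation shows that it lies in each $\mathbb{T}_{\mu_k\Sigma'}$. This gives the inclusion.

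To prove the invariance, I would first reduce to the case of full rank $B$. This is automatic for quantum seeds, since the compatibility condition $\Lambda B=D\iota$ forces $B$ to have full column rank. Full rank is what makes the exchange binomials coprime, and it lets the quantum torus be treated like a Laurent polynomial ring over an Ore domain. Next I would show, for a seed $\Sigma$ and indices $j\ne k$, that
\[
\mathbb{T}_\Sigma\cap\mathbb{T}_{\mu_j\Sigma}\cap\mathbb{T}_{\mu_k\Sigma}
\]
is described by explicit generators, namely the variables $X_i^{\pm1}$ for $i\ne j,k$ together with $X_j,X_j',X_k,X_k'$. The argument writes an element as a Laurent polynomial in $X_j$ with coefficients in the remaining torus. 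Membership in $\mathbb{T}_{\mu_j\Sigma}$ then becomes a divisibility condition by the exchange binomial $P_j$, where $X_jX_j'=P_j$ up to a power of $q$ as in the quantum cluster relation. The two-step intersection lemma follows once one knows that $P_j$ and $P_k$ (after substituting) are coprime, in the noncommutative sense of generating the unit ideal in a suitable localization. Full rank guarantees this.

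The main obstacle is the rank-two step: checking that this generating set is symmetric under replacing $\Sigma$ by $\mu_j\Sigma$. Concretely, one must show that $X_k''$, the variable obtained by mutating $\mu_j\Sigma$ at $k$, lies in the algebra generated above. I would do this by an explicit computation of the exchange polynomial for $X_k''$ in terms of $X_k'$ and $X_j'$, using the matrix mutation rule and the identity $M(\alpha)M(\beta)=q^{\frac12\Lambda(\alpha,\beta)}M(\alpha+\beta)$. I would treat separately the cases $b_{jk}=0$, where everything commutes and is trivial, and $b_{jk}\ne0$. Tracking the $q$-powers is where I expect the care to be needed, and the compatibility condition $\Lambda B=D\iota$ is what makes the required quasi-commutations consistent. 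Alternatively, since this is the cited result of \cite{BerensteinZelevinsky05}, I would simply invoke it.
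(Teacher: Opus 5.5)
Your outline is correct and is essentially the Berenstein--Zelevinsky argument, the quantum analogue of the Berenstein--Fomin--Zelevinsky proof that the upper bound $\mathbb{T}_\Sigma\cap\bigcap_{k}\mathbb{T}_{\mu_k\Sigma}$ is mutation invariant for full-rank (hence coprime) seeds, reduced to an explicit rank-two check; the paper gives no proof of its own and simply cites \cite{BerensteinZelevinsky05}, which is your fallback. One loose point is your gloss on coprimality: full rank guarantees that the exchange binomials share no common factor, and this is used through divisibility arguments, not because they generate a unit ideal (e.g.\ $1+x_1$ and $1+x_2$ in type $A_2$ are coprime but have a common zero).
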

            
               For an $n \times n$ integral skew-symmetric matrix $A$, a mutation at an index $i\in \{1,\dots, n \}$ is defined by the exchange relation given earlier. The \textbf{exchange type} of $A$ is its equivalence class under the equivalence relation generated by mutations and conjugations by permutation matrices. 
               
               For a quantum seed $\mathbf{s}=(B, \Lambda, M)$, the exchange type of $\pi B$ consists of matrices of the form $\pi B'$ for all quantum seeds $(B', \Lambda', M')$ mutation equivalent to $(B, \Lambda, M)$. The \textbf{exchange type} of a quantum seed $(B, \Lambda, M)$ is the exchange type of $\pi B$, and the \textbf{exchange type} of a cluster algebra $\mathcal{A}_q$ is defined as the exchange type of any of its quantum seeds.

               An $n\times n$ skew-symmetric matrix $A$ is \textbf{acyclic} if there is no sequence of indices $i_1, i_2,\dots, i_k=i_1$ with $k \geq 3$ such that $a_{i_{j+1}i_j} >0$ for all $j \in \{1, \dots, k-1\}$. An exchange type is \textbf{acyclic} if it contains at least one acyclic matrix.

               \begin{rem}
                   A skew-symmetric matrix $A$ is acyclic if and only if its associated quiver $Q(A)$ has no oriented cycles.
               \end{rem}

               \begin{prop}[{\cite[Theorem 7.5]{BerensteinZelevinsky05}}] \label{prop:ord-upper}
                   If $\mathcal{A}_q$ is of acyclic type, then $\mathcal{A}_q =\mathcal{U}_q$. 
               \end{prop}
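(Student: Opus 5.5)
This is Berenstein--Zelevinsky's theorem: acyclic type implies $\mathcal{A}_q=\mathcal{U}_q$. By the invariance of both algebras under mutation, I will pass to a seed whose principal part $\pi B$ is acyclic. Since $\mathcal{A}_q$ and $\mathcal{U}_q$ are defined through the whole mutation class, they are unchanged by replacing the initial seed with any mutation-equivalent one. I will also assume the coprimality, i.e.\ full rank, condition on $B$ that is used in \cite{BerensteinZelevinsky05}. This condition is automatic here, since a compatible $\Lambda$ with $\Lambda B=D\iota$ forces $B$ to have full column rank.

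The first key step is the \emph{upper bound} reduction. I will show that $\mathcal{U}_q$ equals the intersection of the $n+1$ quantum tori of the initial seed and its one-step mutations $\mu_i$, $i\in\mathbf{ex}$. This is the quantum Laurent phenomenon argument: the upper bound is invariant under mutation for full-rank seeds. One proves this first for rank two by direct computation with the quantum exchange relations, and then propagates it along mutation sequences.

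The second step is to show that, for an acyclic seed, the upper bound is generated by the standard monomials in $x_1,x_1',\dots,x_n,x_n'$, the frozen variables, and the inverses of the frozen variables. Here $x_k'=M'(e_k)$, and a standard monomial is one containing no product $x_kx_k'$. I will order the mutable indices compatibly with the acyclic orientation, so that each exchange polynomial $P_k$ involves only one side of $k$. Then I will prove by induction on $n$ that any element of the upper bound, written as a Laurent polynomial in $x_k$ with coefficients in the other variables, can be reduced modulo $x_kx_k'-P_k$. The reduction uses the fact that the $P_k$ are pairwise coprime in the appropriate quantum polynomial sense; this coprimality is exactly where acyclicity is used. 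The standard monomials are cluster monomials of neighboring seeds, so they lie in $\mathcal{A}_q$. Together with $\mathcal{A}_q\subseteq\mathcal{U}_q$, this gives equality.

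The main obstacle is the second step: the division argument in the quantum torus, where noncommutativity introduces $q$-powers. I expect to handle it by passing to normalized (bar-invariant) monomials and using the $q$-commutation rules from $\Lambda$. With that normalization, the classical lattice-point argument on Newton polytopes of the exchange polynomials goes through essentially verbatim.
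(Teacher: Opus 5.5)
\emph{Verdict: the proposal has a genuine gap in Step 2, which is the only step where acyclicity matters.}

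The paper gives no proof of this statement; it simply cites \cite[Theorem 7.5]{BerensteinZelevinsky05}. Your outline has the right overall architecture for that theorem. Full rank follows from compatibility. Mutation invariance of the upper bound makes $\mathcal{U}_q$ the intersection of the $n+1$ tori around one seed. That upper bound is then identified with the lower bound generated by the $x_k$, $x_k'$ and the frozen variables. Step 1 is acceptable as a sketch. Step 2, however, does not work as written.

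There are two problems with Step 2. First, ordering the mutable vertices along an acyclic orientation does not make $P_k$ involve only one side of $k$. $P_k$ is the sum of a monomial in the in-neighbours of $k$ and a monomial in its out-neighbours. Unless $k$ is a source or a sink, it therefore involves both sides (for $1\to 2\to 3$ one has $P_2=x_1+x_3$); only each of the two monomials is one-sided. Second, and more seriously, pairwise coprimality of the exchange polynomials is not where acyclicity enters. Coprimality follows from the full rank of $B$ (no two columns are proportional), which you yourself noted is automatic for quantum seeds. It is exactly the input already used in Step 1, and it holds for cyclic seeds just as well. Classically, the Markov seed has pairwise coprime exchange polynomials $x_2^2+x_3^2$, $x_1^2+x_3^2$, $x_1^2+x_2^2$, hence mutation-invariant upper bounds, and yet $\mathcal{A}\neq\mathcal{U}$. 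So a reduction modulo $x_kx_k'-P_k$ driven only by coprimality cannot prove that the upper bound is contained in the lower bound. The appeal to a lattice-point argument on Newton polytopes does not name an argument that would.

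What is missing is an ingredient that genuinely uses acyclicity. In Berenstein--Fomin--Zelevinsky (Cluster algebras III), whose argument \cite{BerensteinZelevinsky05} adapts to the quantum setting, a key input is that the standard monomials of an acyclic seed are linearly independent. This property in fact characterizes acyclic seeds. The inclusion of the upper bound in the lower bound is then established by an inductive argument on the number of mutable vertices. One concrete place acyclicity acts is at a sink or source $k$: one of the two monomials of $P_k$ then involves only frozen variables and is invertible. Freezing $k$ also preserves both acyclicity and full rank, so such an induction can proceed.

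A smaller inaccuracy: standard monomials are not in general cluster monomials of neighbouring seeds. In type $A_2$, $x_1'x_2'$ is standard, but $x_1'$ and $x_2'$ lie in no common cluster. They do still lie in $\mathcal{A}_q$ as products of cluster variables.

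For this paper, quoting \cite{BerensteinZelevinsky05}, as the authors do, is all that is needed.
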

            
    \subsection{Quantum cluster algebras of marked surfaces}\label{subsec:clAlg-surface}
	        
	        A marked surface $\Sigma= ( \mathcal{S}, \mathcal{M})$ is \textbf{triangulable} if it satisfies the following conditions:
	        \begin{itemize}
	        	\item $\partial \mathcal{S} \neq \emptyset$;  
	        	\item Each boundary component of $\mathcal{S}$ contains at least one marked point; 
	        	\item No connected component of $\mathcal{S}$ is a disk with $\leq 2$ marked points on its boundary.
	        \end{itemize}
      
	        An \textbf{ideal triangulation} of a triangulable $\Sigma$ is a maximal collection $\Delta$ of pairwise non-homotopic simple arcs in $\Sigma$ that are mutually non-intersecting in the interior of $\mathcal{S} \setminus \mathcal{M}$. In the following, we assume that $\Sigma$ is triangulable (i.e., it admits an ideal triangulation).
            
	        An \textbf{end} of an arc $\mathsf{x}$ is defined as a strand of $\mathsf{x}$ within a sufficiently small neighborhood of its endpoint. For any arc $\mathsf{x}$, we denote its two ends by $\partial_1(\mathsf{x})$ and $\partial_2(\mathsf{x})$.
	        
	        Given a triangulation $\Delta$ of $\Sigma$, we associate it with a quantum seed
	        \[
	            \mathbf{s}_\Delta = (B^\Delta, \Lambda^\Delta, M^\Delta),       
	        \]
	        where:
	        \begin{itemize}
	            \item $\mathbf{ex} \subset \{ 1, \ldots, N\} \simeq \Delta$ is the subset of non-boundary arcs in $\Delta$.
	            \item $B^\Delta = \mathsf{Q}^\Delta \iota$, where $\mathsf{Q}^\Delta$ is the \textbf{skew-adjacency matrix} of $\Delta$, defined for $\mathsf{x}_i, \mathsf{x}_j \in \Delta$ by:
	            \[
	                \mathsf{Q}^{\Delta}_{ij}:= \sum_{a,b \in \{ 1, 2 \}}
	                    \begin{cases}
            -1 & \text{if } \partial_a(\mathsf{x}_i) \text{ is immediately clockwise to } \partial_b(\mathsf{x}_j) \text{ at a common endpoint}, \\
            1  & \text{if } \partial_a(\mathsf{x}_j) \text{ is immediately clockwise to } \partial_b(\mathsf{x}_i) \text{ at a common endpoint}, \\
            0  & \text{otherwise}.
        \end{cases}
	            \]
	            \item $\Lambda^\Delta$ is the \textbf{orientation matrix} of $\Delta$ defined component-wise for $\mathsf{x_i}, \mathsf{x_j} \in \Delta$ by:
	            \[
	                \Lambda^\Delta_{ij} := \sum_{a,b \in \{ 1, 2 \}}
	                \begin{cases}
            -1 & \text{if } \partial_a(\mathsf{x}_i) \text{ is clockwise to } \partial_b(\mathsf{x}_j) \text{ at a common endpoint}, \\
            1  & \text{if } \partial_a(\mathsf{x}_j) \text{ is clockwise to } \partial_b(\mathsf{x}_i) \text{ at a common endpoint}, \\
            0  & \text{otherwise}.
        \end{cases}
	            \] 
	            \item $M^\Delta : \mathbb{Z}^N \to \mathcal{F}$ is defined by $M^\Delta(\alpha)=[ \Delta^\alpha]$. In particular, $M^\Delta(e_i)=[\Delta^{e_i}]=[\mathsf{x}_i]$, where $\mathsf{x}_i$ is the $i$-th arc in $\Delta$.
	        \end{itemize}
	    
	       \begin{prop}[\cite{muller2016skein}]
	           For an unpunctured marked surface $\Sigma$, the triple $(B^\Delta, \Lambda^\Delta, M^\Delta)$ is a quantum seed satisfying $\Lambda^\Delta B^\Delta= 4 \iota$.
	       \end{prop}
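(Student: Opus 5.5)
To prove that $(B^\Delta,\Lambda^\Delta,M^\Delta)$ is a quantum seed with $\Lambda^\Delta B^\Delta=4\iota$, I will check the three axioms in order: skew-symmetry, the $q$-commutation relations defining the quantum torus, and the compatibility identity. The last one, $\Lambda^\Delta B^\Delta = 4\iota$, is a local combinatorial computation at marked points and is the step needing care. The remaining issue, that the generated skew-field is all of $\mathcal{F}$, holds essentially by definition of $\mathcal{F}$.

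\textbf{Skew-symmetry and the quantum torus.} Skew-symmetry of $\mathsf{Q}^\Delta$ and $\Lambda^\Delta$ is immediate: exchanging $i,j$ exchanges the two nonzero cases in each defining sum. For the torus, recall that arcs in $\Delta$ are pairwise non-crossing in the interior, so in $[\mathsf{x}_i][\mathsf{x}_j]$ the only interaction happens at common endpoints. There the boundary skein relation reorders strands at a cost of $q^{\pm 1/2}$ per pair of ends. Hence
\[
[\mathsf{x}_i][\mathsf{x}_j]=q^{\Lambda^\Delta_{ij}}[\mathsf{x}_j][\mathsf{x}_i],
\]
with the sign determined by the clockwise convention. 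Defining $[\Delta^\alpha]$ as the bar-invariant (Weyl) normalization of the monomial, we get $M(\alpha)M(\beta)=q^{\frac12\Lambda(\alpha,\beta)}M(\alpha+\beta)$. By Lemma \ref{lem:bangle-bases}, the monomials in $\Delta$ are linearly independent, because distinct $\alpha\ge 0$ give distinct simple multicurves. Thus they span a quantum torus after inverting the arcs, which is possible since $\mathrm{Sk}_q$ is a domain.

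\textbf{Compatibility $\Lambda^\Delta B^\Delta=4\iota$.} Fix $\mathsf{x}_k$ mutable and $\mathsf{x}_i\in\Delta$, and compute $\sum_j\Lambda_{ij}\mathsf{Q}_{jk}$. The arc $\mathsf{x}_k$ is the diagonal of a quadrilateral formed by two triangles of $\Delta$, which may be self-folded or glued, handled via ends. Its column $\mathsf{Q}_{\cdot k}$ has support on the four sides (with multiplicity), with alternating signs $+,-,+,-$ around the quadrilateral. I reduce to a sum over the four corners, that is, over ends $\partial_a(\mathsf{x}_i)$ at a marked point $p$. The contribution at a corner involves the two sides of the quadrilateral meeting there, one immediately clockwise and one immediately counterclockwise of $\mathsf{x}_k$'s end when $\mathsf{x}_k$ ends there, or else the two sides at a corner not on $\mathsf{x}_k$. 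If $\partial_a(\mathsf{x}_i)$ lies outside the fan spanned by these sides, it sees both sides on the same side, so the $+1$ and $-1$ cancel. If $\mathsf{x}_i$ is itself one of the sides, or is $\mathsf{x}_k$, the contributions survive. A case check over the positions of $\partial_a(\mathsf{x}_i)$ relative to the corner shows the total is $4\delta_{ik}$: each of the four ends of $\mathsf{x}_k$ at the two endpoints, paired with the two adjacent sides, contributes $+1$ after summing. Every other configuration cancels in pairs.

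\textbf{Main obstacle.} The bookkeeping when the quadrilateral is degenerate is the hardest step. This happens when sides are identified, or when $\mathsf{x}_i$ has both ends at the same marked point. To handle it, I will work at the level of ends rather than arcs. I will lift to the universal cover, or equivalently treat the star of each marked point as a linearly ordered fan of ends, since the boundary is nonempty. The computation then becomes a sum over independent corner contributions, each of which is the nondegenerate computation above. This is the standard argument from \cite{muller2016skein}.
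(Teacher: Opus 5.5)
\textbf{Approach.} The paper gives no argument of its own here; it simply cites Muller. Your corner-by-corner computation is the standard way to verify the claim, and its skeleton is right:
the column $B^\Delta_{\cdot k}$ is the signed sum of the four side-occurrences of the quadrilateral around $\mathsf{x}_k$;
the two side-ends at each corner carry opposite signs;
an end of $\mathsf{x}_i$ outside the closed fan at a corner sees both side-ends on the same side, so it contributes nothing.

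\textbf{The gap: the sign.} The one thing your ``case check'' must actually produce is the sign, and you assert it (``contributes $+1$'') rather than compute it. With the conventions as displayed it comes out negative. $\mathsf{Q}^\Delta$ and $\Lambda^\Delta$ are defined by the same clockwise rule. So for an end of $\mathsf{x}_k$ and an immediately adjacent side-end of $\mathsf{x}_j$, the local contributions of that pair to $\mathsf{Q}_{jk}$ and to $\Lambda_{jk}$ are the same number $\pm1$. Since $\Lambda_{kj}=-\Lambda_{jk}$, the pair contributes $-(\pm1)^2=-1$ to $\sum_j\Lambda_{kj}\mathsf{Q}_{jk}$. The four pairs therefore give $-4$, not $+4$.

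You can check this on the paper's own seed for $\mathbb{A}_{1,1}$. The two bridging arcs are consecutive at both shared marked points, so $\Lambda_{12}=\mathsf{Q}_{12}=B_{12}=-2$. Also $\Lambda_{13}=\Lambda_{14}=0$, since the two ends of a boundary arc flank every other end at its marked point. Hence $(\Lambda B)_{11}=\Lambda_{12}B_{21}=-4$.

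So with the displayed definitions, your argument proves $\Lambda^\Delta B^\Delta=-4\iota$, i.e.\ $(B^\Delta)^T\Lambda^\Delta=4\iota^T$, which is the Berenstein--Zelevinsky form. To reach the stated $+4\iota$ you must reverse one of the two clockwise conventions. That choice is tied to the sign you also left open in the torus step. There you must read off from the boundary skein relation whether $[\mathsf{x}_i][\mathsf{x}_j]$ equals $q^{\Lambda_{ij}}[\mathsf{x}_j][\mathsf{x}_i]$ or $q^{-\Lambda_{ij}}[\mathsf{x}_j][\mathsf{x}_i]$. Compatibility then has to be verified for that same matrix, not for one fixed independently.

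\textbf{Smaller points.} First, ``every other configuration cancels in pairs'' hides the only genuinely geometric input. When $\mathsf{x}_i$ is a side, its end at a corner lies in the closed fan there and does not cancel locally. The $\pm1$ it produces is cancelled by the $\mp1$ at the other endpoint of the same side-occurrence. This holds because the triangle of the quadrilateral containing that side lies on one fixed side of it. Consequently the opposite-signed side-end at the corner is counterclockwise of it at one endpoint and clockwise at the other. With identified sides or coinciding vertices, do this bookkeeping per side-occurrence and per corner. Everything is linear in ends, so no universal cover is needed, and self-folded triangles do not occur without punctures.

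Second, two slips:
reordering a pair of ends costs $q^{\pm1}$ (two half-steps of $q^{\pm1/2}$ through the simultaneous position), not $q^{\pm1/2}$;
$\mathsf{x}_k$ has two ends, not four, which gives four end--side pairs.
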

	       
	       Given a triangulation $\Delta$ and an interior arc $\mathsf{x} \in \Delta$, there is a unique quadrilateral $R$ in $\Delta$ with $\mathsf{x}$ as a diagonal. The \textbf{flip} of $\mathsf{x}$ in $\Delta$ is the triangulation obtained by replacing $\mathsf{x}$ with the other diagonal $\mathsf{x}'$ of $R$ while keeping the remaining arcs of $\Delta$ unchanged (see Figure~\ref{fig: flip}).
	       
	       \begin{figure}[H]
	       	\centering
	       	\begin{tikzpicture}[scale=.7]
	       		\begin{scope}[xshift=-2in, scale=.4]        				
	       			\node[marked] (1) at (-4, 4) {};
	       			\node[marked] (2) at (8, 4) {};
	       			\node[marked] (3) at (4,-4) {};
	       			\node[marked] (4) at (-8,-4) {};
	       			\draw[thick] (4) to [relative,out=-30,in=150] node[left] {$\mathsf{x}$} (2);
	       			\draw[thick] (1) to [relative,out=-30,in=150]  (2);
	       			\draw[thick] (2) to [relative,out=-30,in=150]  (3);
	       			\draw[thick] (3) to [relative,out=-30,in=150]  (4);
	       			\draw[thick] (4) to [relative,out=-30,in=150]  (1);
	       		\end{scope}
	       		\node (a) at (0,0) {$\rightarrow$};
	       		\begin{scope}[xshift=2in,scale=.4]
	       			\begin{scope}
	       				\node[marked] (1) at (-4,4) {};
	       				\node[marked] (2) at (8,4) {};
	       				\node[marked] (3) at (4,-4) {};
	       				\node[marked] (4) at (-8,-4) {};
	       				\draw[thick] (1) to [relative,out=-30,in=150] node[above] {$\mathsf{x}'$} (3);
	       				\draw[thick] (1) to [relative,out=-30,in=150]  (2);
	       				\draw[thick] (2) to [relative,out=-30,in=150]  (3);
	       				\draw[thick] (3) to [relative,out=-30,in=150]  (4);
	       				\draw[thick] (4) to [relative,out=-30,in=150]  (1);
	       			\end{scope}
	       		\end{scope}
	       	\end{tikzpicture}
	       	\caption{Flip}
	       	\label{fig: flip}
	       \end{figure}
	       
	       \begin{prop}[\cite{muller2016skein}]
	       	   For any triangulation $\Delta$ and any flip $\Delta '$ of $\Delta$ obtained from $\Delta$ by flipping a non-boundary arc $\mathsf{x}_j$, the quantum seed $(B^{\Delta'}, \Lambda^{\Delta'}, M^{\Delta'})$ is the mutation of $(B^{\Delta}, \Lambda^{\Delta}, M^{\Delta})$ at $j$.
	       \end{prop}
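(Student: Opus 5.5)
The statement is Muller's flip compatibility: if $\Delta'$ is obtained from $\Delta$ by flipping the non-boundary arc $\mathsf{x}_j$, then $(B^{\Delta'},\Lambda^{\Delta'},M^{\Delta'})$ is the mutation of $(B^\Delta,\Lambda^\Delta,M^\Delta)$ at $j$. I would check the three defining conditions of mutation one by one, working inside the quadrilateral $R$ bounded by the four sides $\mathsf{a},\mathsf{b},\mathsf{c},\mathsf{d}$ with diagonals $\mathsf{x}=\mathsf{x}_j$ and $\mathsf{x}'$. Any side that is a boundary arc is a frozen variable, and a side may coincide with another arc of $\Delta$ or appear twice.

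\textbf{Exchange matrix.} The matrix $B^{\Delta'}$ is computed from the ends of arcs at marked points. Ends not belonging to $\mathsf{x}$ or $\mathsf{x}'$ keep their cyclic neighbours, except at the four corners of $R$.
\begin{itemize}
\item At the two corners where $\mathsf{x}$ ended, removing $\mathsf{x}$ makes the two adjacent sides immediately adjacent.
\item At the other two corners, inserting $\mathsf{x}'$ breaks an adjacency between two sides.
\end{itemize}
A direct local bookkeeping shows that entries involving $j$ change sign. The other entries change by $\tfrac12(|b_{ji}|b_{ik}+b_{ji}|b_{ik}|)$, which is exactly the standard quiver-mutation rule for triangulations (Fomin--Shapiro--Thurston). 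I would do this as a case check on the local configuration, with multiplicities handled by summing contributions end by end.

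\textbf{Unchanged monomials.} For $\alpha$ with $\alpha_j=0$, the monomial $M^{\Delta}(\alpha)=[\Delta^\alpha]$ only involves arcs common to both triangulations. The same simple multicurve therefore defines both $M^\Delta(\alpha)$ and $M^{\Delta'}(\alpha)$. Each is normalized to be bar-invariant with the same power $q^{\lambda/2}$, computed from the orientation matrix restricted to the common arcs, so they agree.

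\textbf{Cluster relation.} This is the substantive step. Compute $[\mathsf{x}][\mathsf{x}']$ in $\mathrm{Sk}_q(\Sigma)$ by superposing: $\mathsf{x}$ and $\mathsf{x}'$ meet in exactly one interior crossing. The Kauffman relation gives
\[ [\mathsf{x}\cdot\mathsf{x}'] = q\,[\text{positive smoothing}] + q^{-1}[\text{negative smoothing}], \]
and the two smoothings are the pairs of opposite sides $\mathsf{a}\cup\mathsf{c}$ and $\mathsf{b}\cup\mathsf{d}$. These are simple multicurves, up to boundary-skein reordering at the shared corners, which contributes powers $q^{\pm1/2}$.

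Multiplying on the left by $[\mathsf{x}]^{-1}=M^\Delta(-e_j)$ in the quantum torus $\mathbb{T}_{\Lambda^\Delta}$ then gives
\[ [\mathsf{x}'] = M^\Delta(-e_j+e_a+e_c)+M^\Delta(-e_j+e_b+e_d). \]
For this one must check that the $q$-powers combine to exactly $q^{\frac12\Lambda^\Delta(\cdot,\cdot)}$. That check follows by comparing the orientation-matrix entries $\Lambda^\Delta_{j,a}$ and similar ones with the boundary-skein exponents at the four corners.

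Finally, the vectors $e_a+e_c$ and $e_b+e_d$ should be exactly $\sum_{b_{ij}>0}b_{ij}e_i$ and $-\sum_{b_{ij}<0}b_{ij}e_i$ from the definition of $\mathsf{Q}^\Delta$, with signs fixed by the clockwise conventions.

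\textbf{Main obstacle.} I expect the $q$-power bookkeeping in the cluster relation to be the hardest part. This is especially true in degenerate cases where sides of $R$ coincide, or where a corner of $R$ carries several ends of the same arc. There I would first reduce to the generic quadrilateral by lifting to a cover, or by working in a disk neighbourhood of $R$ via Muller's local relations. I would then argue that the coefficients, being determined by local end data, are unchanged by the identifications.
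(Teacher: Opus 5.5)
Your three-part verification is the standard route; the paper gives no argument and simply cites Muller. The exchange-matrix and unchanged-monomial steps are fine.

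\textbf{The gap.} The one step with real content is the $q$-powers in $[\mathsf{x}']=M^\Delta(-e_j+e_a+e_c)+M^\Delta(-e_j+e_b+e_d)$. You assert they come out right, and the plan you sketch for checking this does not work as stated.

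The quadrilateral $R$ is in general only immersed: corners, and even opposite sides, can be identified. For example, in $\mathbb{A}_{1,1}$ two opposite sides of $R$ are the same interior arc, and $\mathsf{x},\mathsf{x}'$ share both endpoints. So there is no disk neighbourhood of $R$ to compute in.

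More importantly, the exponents are not determined corner by corner. When two corners of $R$ lie at the same marked point, both the boundary-skein exponent of $\mathsf{x}\cdot\mathsf{x}'$ and the pairing $\Lambda^\Delta(e_j,e_a+e_c)$ pick up contributions from pairs of ends sitting in \emph{different} corners. Passing to a cover separates exactly those ends, so it cannot see them. These extra contributions do cancel against each other, but that cancellation is what has to be proved. The individual quantities are certainly changed by the identifications, so your claim that they are ``unchanged'' is wrong as stated.

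\textbf{The fix: bar-invariance.} This removes the bookkeeping entirely.
\begin{enumerate}
\item The Kauffman relation at the single interior crossing, followed by boundary skein relations at the endpoints, gives $[\mathsf{x}][\mathsf{x}']=q^{u}M^\Delta(e_a+e_c)+q^{v}M^\Delta(e_b+e_d)$ for some $u,v\in\frac12\mathbb{Z}$.
\item Hence $[\mathsf{x}']=q^{u'}M^\Delta(-e_j+e_a+e_c)+q^{v'}M^\Delta(-e_j+e_b+e_d)$ in the quantum torus.
\item The bar involution extends to the Ore localization, and $[\mathsf{x}']$ is bar-invariant.
\item Every $M^\Delta(\gamma)$ is also bar-invariant. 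Induct on $\gamma$ using $\overline{[\mathsf{x}_i]}=[\mathsf{x}_i]$, anti-multiplicativity of the bar map, and skew-symmetry of $\Lambda^\Delta$.
\item Apply the bar involution and compare coefficients of the two distinct, hence linearly independent, monomials. This forces $u'=v'=0$, with no case analysis.
\end{enumerate}

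This uses $e_a+e_c\neq e_b+e_d$. That holds because in an unpunctured surface two adjacent sides of $R$ are never the same arc, since that would need a self-folded triangle. Consequently $e_a+e_c$ and $e_b+e_d$ are exactly the positive and negative parts of the nonzero $j$-th column of $B^\Delta$. This is also the fact you need for your final identification with the mutation formula.
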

	        
	       \begin{prop}[{\cite[Theorem 7.15, Theorem 9.8]{muller2016skein}}]\label{prop:inclusion}
	           For any triangulable marked surface $\Sigma$, we have the inclusions
	           \[
	               \mathcal{A}_q (\Sigma) \subseteq \mathrm{Sk}_q^\circ (\Sigma) \subseteq \mathcal{U}_q (\Sigma).
	           \]
	           Moreover, if each component of $\Sigma$ contains at least two marked points, then these inclusions are isomorphisms.            	   
	       \end{prop}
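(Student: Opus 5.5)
The plan is to prove the two inclusions separately, working triangulation by triangulation, and then to get the equalities by showing $\mathcal{A}_q(\Sigma)=\mathcal{U}_q(\Sigma)$ under the two-marked-point hypothesis, which squeezes $\mathrm{Sk}_q^\circ(\Sigma)$ in between.

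\emph{First inclusion.} Fix an initial triangulation $\Delta$ and its seed $\mathbf{s}_\Delta=(B^\Delta,\Lambda^\Delta,M^\Delta)$.
\begin{enumerate}
\item The flip graph of ideal triangulations of an unpunctured surface is connected. By the flip--mutation proposition above, the mutation classes are then related as follows: every quantum seed mutation equivalent to $\mathbf{s}_\Delta$ is, up to permutation of indices, of the form $\mathbf{s}_{\Delta'}$ for some triangulation $\Delta'$.
\item Hence every cluster variable is a class $[\mathsf{x}]$ of a simple arc, and so lies in $\mathrm{Sk}_q(\Sigma)$.
\item The frozen variables are the boundary arcs, which are invertible in $\mathrm{Sk}_q^\circ(\Sigma)$ by construction of the localization.
\end{enumerate}
This gives $\mathcal{A}_q(\Sigma)\subseteq \mathrm{Sk}^\circ_q(\Sigma)$. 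This presupposes that $\mathrm{Sk}_q^\circ(\Sigma)$ and $\mathcal{F}$ are identified inside a common skew-field. I would justify this by showing that $\mathrm{Sk}_q(\Sigma)$ is an Ore domain, using the bangle basis of Lemma~\ref{lem:bangle-bases} and a leading-term argument.

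\emph{Second inclusion.} Fix any triangulation $\Delta'$. I will show that every link class $[\mathrm{X}]$ lies in the quantum torus $\mathbb{T}_{\Lambda^{\Delta'}}$ generated by the $[\mathsf{x}]^{\pm1}$ for $\mathsf{x}\in\Delta'$.
\begin{enumerate}
\item Choose a monomial $[\Delta'^{\alpha}]$ with enough copies of each arc of $\Delta'$ so that the product $[\Delta'^{\alpha}][\mathrm{X}]$ can be fully resolved by the Kauffman and boundary skein relations. After resolving, every resulting multicurve is crossingless and disjoint from the interiors of the arcs of $\Delta'$.
\item Any such multicurve is a union of arcs of $\Delta'$, unknots and contractible arcs. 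A non-contractible loop, or an arc not in $\Delta'$, must cross some arc of $\Delta'$.
\item Using the unknot and contractible-arc relations, $[\Delta'^{\alpha}][\mathrm{X}]$ is therefore a $\mathbb{Z}_q$-combination of monomials in $\Delta'$.
\item Consequently $[\mathrm{X}]\in\mathbb{T}_{\Lambda^{\Delta'}}$. Boundary arcs are themselves monomials, so inverting them stays inside the torus.
\end{enumerate}
Intersecting over all $\Delta'$ gives $\mathrm{Sk}^\circ_q\subseteq\mathcal{U}_q$. The delicate point in step 1 is checking that resolving the crossings with $\Delta'$ really yields multicurves disjoint from $\Delta'$. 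I would handle this by choosing $\mathrm{X}$ in minimal position with respect to $\Delta'$ and resolving one crossing at a time.

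\emph{Equality.} This is the main obstacle. Directly writing loops as polynomials in cluster variables does not work: multiplying a loop by an arc crossing it only expresses the loop through $[\mathsf{x}]^{-1}$. Instead I would prove $\mathcal{A}_q=\mathcal{U}_q$ by showing that the cluster algebra is locally acyclic when every component has at least two marked points. The steps would be:
\begin{enumerate}
\item Cut $\Sigma$ along suitable arcs joining distinct marked points. Each cut corresponds to a covering pair of localizations at cluster variables (a Banff-type cover).
\item Iterate until every piece is a disk or annulus, whose exchange type is acyclic (types $\mathbb{A}$ and $\widetilde{\mathbb{A}}$).
\item On each piece, Proposition~\ref{prop:ord-upper} gives $\mathcal{A}_q=\mathcal{U}_q$.
\item Show that upper cluster algebras glue along such covers: an element lying in all localizations of the cover that are cluster algebras lies in $\mathcal{A}_q$. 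This yields $\mathcal{A}_q(\Sigma)=\mathcal{U}_q(\Sigma)$.
\end{enumerate}
Combined with the two inclusions, both inclusions become equalities. The hypothesis of two marked points per component is exactly what makes the cutting arcs available and non-isolated. Making the quantum version of the covering and gluing lemma precise is where I expect most of the work to lie.
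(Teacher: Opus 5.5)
The paper gives no argument of its own here and only cites Muller's Theorems 7.15 and 9.8, and your outline follows the same strategy as that source: the inclusions come from flip $=$ mutation together with clearing denominators by a large monomial in a triangulation (each multiplication by an arc crossing the multicurve strictly lowers the intersection number with that arc, until every term is a monomial in $\Delta'$), and the equalities come from a quantum version of Muller's local-acyclicity covering argument, which reduces to acyclic disks and annuli where Berenstein--Zelevinsky applies. One correction and one caveat: every seed in the mutation class is a triangulation seed because every interior arc of an unpunctured triangulation is flippable, so each mutation is realized by a flip (connectedness of the flip graph gives the converse, which neither inclusion needs); and the steps you defer---the existence of covering pairs when every component has two marked points, and the gluing lemma for Ore localizations at cluster variables in the quantum setting---are the actual content of the equality statement, so you would need to import them from Muller rather than treat them as routine.
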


        \subsection{Dominance order, degree and support}
        Let $\mathbf{s} = (B, \Lambda, M)$ be a quantum seed. Recall that the compatibility condition $\Lambda B = D \iota$ implies that $B$ has full rank, i.e., the column vectors $\{ \mathrm{col}_k B \}_{k \in \mathbf{ex}}$ are linearly independent. For simplicity, we denote $x_i(\mathbf{s}):= M^\mathbf{s}(e_i)$ for $i \in \{1, \dots, N\}$, $y_k(\mathbf{s}):=x(\mathbf{s})^{\mathrm{col}_k B^\mathbf{s}}$ for $k\in \mathbf{ex}$, and $y(\mathbf{s})^n:= x(\mathbf{s})^{B^\mathbf{s}n}$ for $n\in \mathbb{N}^{\mathbf{ex}}$. In the following, we shall omit the symbol $\mathbf{s}$ if it does not cause any ambiguity.  
        
        Let $\mathfrak{M}(\mathbf{s}):=\mathbb{Z}^N$ be the lattice with standard basis vectors $\{e_k\}_{k=1}^N$. The \textbf{dominance order} $\prec_\mathbf{s}$ on $\mathfrak{M}(\mathbf{s})$ is defined as follows:
           $g'\prec_\mathbf{s} g$ if and only if there exists a non-zero vector $n \in \mathbb{N}^{\mathbf{ex}}$ such that $g'=g+B^{\mathsf{s}}n$. 

        Denote by $\mathbb{Z}_q[\widehat{\mathbb{N}^{\mathbf{ex}}}]$ the completion of the monoid algebra $\mathbb{Z}_q[\mathbb{N}^{\mathbf{ex}}] := \mathbb{Z}_q[y^n]_{n \in \mathbb{N}^{\mathbf{ex}}}$ with respect to the maximal ideal generated by $\{ y^n \mid n \in \mathbb{N}^{\mathbf{ex}} \setminus \{0\} \}$. We define the \textbf{ring of formal Laurent series} as $\widehat{\mathbb{T}}_{\Lambda} := \mathbb{T}_{\Lambda} \otimes_{\mathbb{Z}_q[\mathbb{N}^{\mathbf{ex}}]} \mathbb{Z}_q[\widehat{\mathbb{N}^{\mathbf{ex}}}].$

Consider a formal sum $Z = \sum_{d \in \mathfrak{M}(\mathbf{s})} c_d x^d$ with $c_d \in \mathbb{Z}_q$. If the set $\{d \mid c_d \neq 0\}$ has a unique $\prec_{\mathbf{s}}$-maximal element $g$, we define its \textbf{degree} by $\deg^{\mathbf{s}}(Z) = g$. We say that $Z$ is $g$-\textbf{pointed} if $c_g = 1$. Similarly, if the set $\{d \mid c_d \neq 0\}$ has a unique $\prec_{\mathbf{s}}$-minimal element $h$, we define its \textbf{codegree} by $\mathrm{codeg}^{\mathbf{s}}(Z) = h$, and we say $Z$ is $h$-\textbf{copointed} if $c_h = 1$. $Z$ is said to be \textbf{bipointed} if it is both $g$-pointed and $h$-copointed.

For any vector $n = \sum_{k \in \mathbf{ex}} n_k f_k \in \mathbb{N}^{\mathbf{ex}}$, where $\{f_k\}_{k \in \mathbf{ex}}$ denotes the standard basis vectors of $\mathbb{Z}^{\mathbf{ex}}$, we define its \textbf{support} as
\[
    \mathrm{supp}(n) := \{ k \in \mathbf{ex} \mid n_k \neq 0 \}.
\]
For any $Z = x^g \sum_{n \in \mathbb{N}^{\mathbf{ex}}} c_n y^n \in \widehat{\mathbb{T}}_{\Lambda}$ with $c_0 \neq 0$, its \textbf{support} is defined as
\[
    \mathrm{supp}(Z) := \bigcup_{c_n \neq 0} \mathrm{supp}(n) \subseteq \mathbf{ex}.
\]
If $Z$ is a Laurent polynomial, we further define its \textbf{support dimension} $\mathrm{suppDim}(Z) \in \mathbb{N}^{\mathbf{ex}}$ by
\[
    (\mathrm{suppDim}(Z))_k := \max \{ n_k \mid c_n \neq 0 \}, \quad \forall k \in \mathbf{ex}.
\]

       \subsection{Tropical transformation} 

Fix a seed $\mathbf{s}$ and let $\mathbf{s}' = \mu_i \mathbf{s}$ be the seed obtained by mutation at $i \in \mathbf{ex}$. The \textbf{tropical transformation} $\phi_{\mathbf{s}', \mathbf{s}} : \mathfrak{M}(\mathbf{s}) \to \mathfrak{M}(\mathbf{s}')$ is a piecewise linear map defined as follows:

For $m = \sum_{k=1}^N m_k e_k^{\mathbf{s}} \in \mathfrak{M}(\mathbf{s})$, its image $m' = \phi_{\mathbf{s}', \mathbf{s}}(m) = \sum_{k=1}^N m'_k e_k^{\mathbf{s}'} \in \mathfrak{M}(\mathbf{s}')$ is given by
\[
m'_k = 
\begin{cases}
    -m_i & \text{if } k = i, \\
    m_k + b_{ki}^{\mathbf{s}} [m_i]_+ & \text{if } b_{ki}^{\mathbf{s}} \geq 0 \text{ and } k \neq i, \\
    m_k + b_{ki}^{\mathbf{s}} [-m_i]_+ & \text{if } b_{ki}^{\mathbf{s}} \leq 0 \text{ and } k \neq i,
\end{cases}
\]
where $[x]_+ := \max(x, 0)$ denotes the positive part.

For an arbitrary seed $\mathbf{s}' = \mu_{\underline{i}} \mathbf{s}$ obtained by a sequence of mutations $\underline{i} = (i_1, \dots, i_k)$, the tropical transformation $\phi_{\mathbf{s}', \mathbf{s}}$ is defined as the composition of the tropical transformations along the mutation sequence $\underline{i}$. This composition is well-defined, in the sense that it is independent of the choice of the mutation sequence connecting $\mathbf{s}$ to $\mathbf{s}'$.

We define a linear map $\psi_{\mathbf{s}', \mathbf{s}}: \mathfrak{M}(\mathbf{s}) \to \mathfrak{M}(\mathbf{s}')$ by:
\[
\psi_{\mathbf{s}', \mathbf{s}}\left( \sum_{k \in N} m_k e_k^{\mathbf{s}} \right) := \sum_{k \in N} m_k \phi_{\mathbf{s}', \mathbf{s}}(e_k^{\mathbf{s}}), \quad \forall (m_k)_{k \in N} \in \mathbb{Z}^N.
\]

Let $Z \in \widehat{\mathbb{T}}_{\Lambda^{\mathbf{s}}}$ be a $g$-pointed element. We say that $Z$ is \textbf{compatibly pointed} at seeds $\mathbf{s}$ and $\mathbf{s}'$ if $Z$ is also $\phi_{\mathbf{s}', \mathbf{s}}(g)$-pointed in $\widehat{\mathbb{T}}_{\Lambda^{\mathbf{s}'}}$. We say $Z$ is \textbf{compatibly pointed} if it is compatibly pointed at any pair of mutation-equivalent seeds $\mathbf{s}, \mathbf{s}'$.
           
A seed $\mathbf{s}$ is said to be \textbf{injective-reachable} if there exists a mutation sequence $\underline{i}$ and a permutation $\sigma$ of $\mathbf{ex}$ such that in the seed $\mathbf{s}[1] := \mu_{\underline{i}} \mathbf{s}$, the $g$-vectors of the cluster variables satisfy:
\[
\pi \left( g_{\sigma(k)}^{\mathbf{s}[1]} \right) = -f_k, \quad \forall k \in \mathbf{ex},
\]
where $\pi : \mathbb{Z}^N \to \mathbb{Z}^{\mathbf{ex}}$ is the canonical projection map. In this case, we may also denote the original seed by $\mathbf{s} = \mu_{\underline{i}} \mathbf{s}[-1]$.

Given an injective-reachable seed $\mathbf{s}$ and $g \in \mathfrak{M}(\mathbf{s})$, let $h := \psi^{-1}_{\mathbf{s}[-1], \mathbf{s}} \phi_{\mathbf{s}[-1], \mathbf{s}}(g)$. If there exists a vector $n \in \mathbb{N}^{\mathbf{ex}}$ such that
\[
    h = g + B^{\mathbf{s}} n,
\]
we then define the \textbf{support dimension} of $g$ as $\mathrm{suppDim}(g) := n$.
            
\begin{prop}[{Compatibility and support dimensions \cite[Proposition 3.4.8]{qin2019bases}}] \label{prop:compati}
    Let $\mathbf{s}$ and $\mathbf{s}[-1]$ be injective-reachable seeds, and let $z$ be a $g$-pointed Laurent polynomial with $g \in \mathfrak{M}(\mathbf{s})$. 
    \begin{enumerate}
        \item If $z$ is compatibly pointed at seeds $\mathbf{s}$ and $\mathbf{s}[-1]$, then $g$ has a support dimension. In this case, $z$ is bipointed and satisfies $\mathrm{suppDim}(z) = \mathrm{suppDim}(g)$.
        \item Conversely, if $g$ has a support dimension and $z$ is bipointed with $\mathrm{suppDim}(z) = \mathrm{suppDim}(g)$, then $z$ is compatibly pointed at seeds $\mathbf{s}$ and $\mathbf{s}[-1]$.
    \end{enumerate}
\end{prop}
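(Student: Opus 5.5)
The plan is to compare Laurent expansions of $z$ in the two tori $\mathbb{T}_{\Lambda^{\mathbf{s}}}$ and $\mathbb{T}_{\Lambda^{\mathbf{s}[-1]}}$. I will show that the dominance order of $\mathbf{s}[-1]$, transported to $\mathfrak{M}(\mathbf{s})$ by the linear map $\psi^{-1}_{\mathbf{s}[-1],\mathbf{s}}$, is the \emph{opposite} of $\prec_{\mathbf{s}}$. Under this reversal, "pointed in $\mathbf{s}[-1]$" becomes "copointed in $\mathbf{s}$".

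\textbf{Step 1: the two seeds are related by inverting $y$'s.} By definition of injective-reachability, $\mathbf{s}=\mu_{\underline{i}}\mathbf{s}[-1]$, and the cluster variables of $\mathbf{s}$, viewed from $\mathbf{s}[-1]$, have $g$-vectors whose exchangeable parts are $-f_k$ up to the permutation $\sigma$. Dually, I will show that each cluster variable $x_k(\mathbf{s}[-1])$, expanded in $\mathbf{s}$, has the form
\[
x(\mathbf{s})^{\psi^{-1}_{\mathbf{s}[-1],\mathbf{s}}(e_k)}\cdot F_k\bigl(y(\mathbf{s})^{-1}\bigr)
\]
with constant term $1$. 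From this I will deduce that $y_{\sigma(k)}(\mathbf{s}[-1])$ equals $y_k(\mathbf{s})^{-1}$, up to the monomial change of lattice $\psi$. The tool is the standard separation formula with $F$-polynomials together with sign-coherence of $c$-vectors. Consequently, a Laurent monomial $x(\mathbf{s}[-1])^{d}$, expanded in $\mathbf{s}$, equals $x(\mathbf{s})^{\psi^{-1}(d)}$ times a series in $y(\mathbf{s})^{-1}$ with constant term $1$. In particular $g'\prec_{\mathbf{s}[-1]} g''$ translates into $\psi^{-1}(g')\succ_{\mathbf{s}}\psi^{-1}(g'')$. This step is the main obstacle: the identification of the $y$-variables of $\mathbf{s}[-1]$ with inverted, permuted $y$-variables of $\mathbf{s}$ must be made precise, including the frozen directions. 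I would first try to do it via tropical duality between $g$- and $c$-vectors (the $C$-matrix at $\mathbf{s}[1]$ being $-$(permutation)).

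\textbf{Step 2: proof of (1).} Suppose $z$ is $\phi_{\mathbf{s}[-1],\mathbf{s}}(g)$-pointed in $\mathbf{s}[-1]$. Write
\[
z=x(\mathbf{s}[-1])^{\phi(g)}\sum_{n'} c'_{n'}\,y(\mathbf{s}[-1])^{n'},\qquad c'_0=1.
\]
Substituting Step 1, $z$ in $\mathbf{s}$ becomes
\[
x(\mathbf{s})^{h}\cdot G\bigl(y(\mathbf{s})^{-1}\bigr),\qquad h=\psi^{-1}\phi(g),
\]
where $G$ is a series with constant term $1$. Hence $z$ is $h$-copointed in $\mathbf{s}$, and every exponent occurring is $h-B^{\mathbf{s}}m$ with $m\in\mathbb{N}^{\mathbf{ex}}$. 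On the other hand, $z$ is $g$-pointed, so its exponents are $g+B^{\mathbf{s}}n'$ with $n'\in\mathbb{N}^{\mathbf{ex}}$. Taking the exponent $h$ gives $h=g+B^{\mathbf{s}}n$ for some $n\in\mathbb{N}^{\mathbf{ex}}$, which is the support dimension of $g$; this uses that $B^{\mathbf{s}}$ has full rank. Moreover $z$ is bipointed.

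Comparing the two descriptions of the exponents gives $n'=n-m$ with $m\ge 0$. So every $n'$ appearing satisfies $n'\le n$ componentwise, with equality at the codegree term. Therefore $\mathrm{suppDim}(z)=n=\mathrm{suppDim}(g)$.

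\textbf{Step 3: proof of (2).} Conversely, assume $z$ is bipointed with $\mathrm{suppDim}(z)=n=\mathrm{suppDim}(g)$. Then the codegree of $z$ is $g+B^{\mathbf{s}}n=h$, and
\[
z=x(\mathbf{s})^{h}\cdot G\bigl(y(\mathbf{s})^{-1}\bigr)
\]
is a polynomial with constant term $1$. Reading this backwards through Step 1, $z$ has leading exponent $\psi(h)=\phi_{\mathbf{s}[-1],\mathbf{s}}(g)$ with coefficient $1$ in $\mathbf{s}[-1]$, and all other terms are lower in $\prec_{\mathbf{s}[-1]}$. Hence $z$ is compatibly pointed at $\mathbf{s}$ and $\mathbf{s}[-1]$.
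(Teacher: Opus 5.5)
The paper gives no argument of its own for this proposition; it is quoted from \cite[Proposition 3.4.8]{qin2019bases}. Your proposal is a correct outline of the standard argument for it. The linear map $\psi_{\mathbf{s}[-1],\mathbf{s}}$ turns $\prec_{\mathbf{s}[-1]}$ into the opposite of $\prec_{\mathbf{s}}$, so pointedness in $\mathbf{s}[-1]$ becomes copointedness in $\mathbf{s}$. Your exponent bookkeeping in Steps 2 and 3 is right: you compare the two descriptions $g+B^{\mathbf{s}}n'$ and $h-B^{\mathbf{s}}m$ of the support using full rank of $B^{\mathbf{s}}$, and this correctly identifies $\mathrm{suppDim}(z)$ with the shift from degree to codegree.

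Step 1 is where the real content lies, and you should reverse the order of deductions you announce there.

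\emph{Why the announced order is circular.} The claim you plan to prove first, that $x_k(\mathbf{s}[-1])$ is $\psi^{-1}(e_k)$-copointed in $\mathbf{s}$, is itself the case $z=x_k(\mathbf{s}[-1])$ of part (1). The max-degree property of $F$-polynomials gives copointedness, but it does not by itself identify the codegree as $\psi^{-1}(e_k)$.

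\emph{The order that works.} Start from what injective-reachability of $\mathbf{s}[-1]$ gives for free:
$x_i(\mathbf{s})=x(\mathbf{s}[-1])^{\psi(e_i)}F_i(y(\mathbf{s}[-1]))$, with $\pi\psi(e_{\sigma(k)})=-f_k$.
By tropical duality $G^{T}C=I$, the $c$-vectors of $\mathbf{s}$ relative to $\mathbf{s}[-1]$ are then $c_{\sigma(k)}=-f_k$. The separation formula for $y$-variables now gives
$y_{\sigma(k)}(\mathbf{s})^{-1}=y_k(\mathbf{s}[-1])\cdot(\text{power series in } y(\mathbf{s}[-1]) \text{ with constant term } 1)$.
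Comparing this with the product of the expansions of the $x_i(\mathbf{s})$ gives $\psi(\mathrm{col}_{\sigma(k)}B^{\mathbf{s}})=-\mathrm{col}_kB^{\mathbf{s}[-1]}$ as vectors in $\mathbb{Z}^N$; this is the order-reversal, frozen rows included. Inverting this triangular substitution formally gives your expansion of $x(\mathbf{s}[-1])^d$ in powers of $y(\mathbf{s})^{-1}$. The cluster-variable statement then comes out as a by-product rather than an input.

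\emph{Completions.} The substitution in Step 2 takes place in completions, so say explicitly which map you use. It is the continuous isomorphism between $\widehat{\mathbb{T}}_{\Lambda^{\mathbf{s}[-1]}}$ and the completion of $\mathbb{T}_{\Lambda^{\mathbf{s}}}$ in the $y(\mathbf{s})^{-1}$-direction, compatible with the two embeddings of $\mathbb{T}_{\Lambda^{\mathbf{s}}}$. It exists because of the triangularity just described. This compatibility is what lets you conclude that the Laurent polynomial $z$ itself, not merely some formal series, is $h$-copointed.
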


\subsection{Common triangular basis}
 
    Throughout this paper, we assume that the injective-reachable condition holds for all seeds under consideration.
	
    \begin{rem}
        The injective-reachable condition is satisfied for cluster algebras arising from marked surfaces, with the exception of once-punctured closed surfaces; see \cite[Prop. 7.10]{FominShapiroThurston08}.
    \end{rem}

Let $\Delta^+$ be the set of seeds mutation equivalent to an initial seed $\mathbf{s}_0$.
    \begin{defn}[Triangular basis]
    For any seed $\mathbf{s} \in \Delta^+$, a triangular basis $\mathbf{L}^{\mathbf{s}}$ of $\mathcal{U}_q$ is a $\mathbb{Z}_q$-basis satisfying the following conditions:
    \begin{itemize}
        \item $\mathbf{L}^{\mathbf{s}}$ contains the quantum cluster monomials in seeds $\mathbf{s}$ and $\mathbf{s}[1]$. 
        \item (\textbf{Pointedness}) $\mathbf{L}^{\mathbf{s}} = \{L^{\mathbf{s}}_g \mid g \in \mathfrak{M}(\mathbf{s})\}$ such that each $L^{\mathbf{s}}_g$ is $g$-pointed.
        \item (\textbf{Bar-invariance}) Each element $L^{\mathbf{s}}_g$ is invariant under the bar-involution $\overline{(\cdot)}$.
        \item (\textbf{Degree triangularity}) For any $i \in \{1, 2, \ldots, N\}$, there exists $\alpha \in \mathbb{Z}$ such that
        \[
        q^{{\alpha}/2} x_i(\mathbf{s}) * L^{\mathbf{s}}_g \in L^{\mathbf{s}}_{g+e_i} + \sum_{g' \prec_{\mathbf{s}} g+e_i} q^{-1/2} \mathbb{Z}[q^{-1/2}] L^{\mathbf{s}}_{g'}.
        \]
    \end{itemize}
\end{defn}

\begin{defn}[Common triangular basis]
    A $\mathbb{Z}_q$-basis $\mathbf{L}$ of $\mathcal{U}_q$ is called the \textbf{common triangular basis} if, for every seed $\mathbf{s} \in \Delta^+$, $\mathbf{L}$ is a triangular basis with respect to $\mathbf{s}$.
\end{defn}

\section{Main Results}\label{sec:3}
     \subsection{Attachment Points and Angles}\label{subsec:attachment}
    
   We first introduce the preliminary concepts and constructions. Let $\mathsf{x}$ be a simple arc and $\mathsf{Y}$ a weighted simple multicurve such that the superposition $\mathsf{x}\cdot \mathsf{Y}$ is in minimal position with transverse crossings. Let $\mathsf{x}\cap \mathsf{Y}$ denote the set of crossings. For any sign function $\varepsilon: \mathsf{x} \cap \mathsf{Y} \to \{+, -\}$, the \textbf{weighted crossingless multicurve} $\mathsf{E}_\varepsilon$ is obtained by applying:
	\begin{itemize}
		\item the $+$ smoothing at each crossing $c \in \varepsilon^{-1}(+)$ (as defined in Section \ref{sec:skein-alg});
		\item the $-$ smoothing at each crossing $c \in \varepsilon^{-1}(-)$.
	\end{itemize}
    
    The multicurve $\mathsf{E}_\varepsilon$ admits the decomposition:
    \begin{align}\label{eq:decompose-E}  
    \begin{split}
    \mathsf{E}_\varepsilon=&\Arcs_\varepsilon \cup \Loops_\varepsilon \cup \Unknots_\varepsilon \cup \Unarcs_\varepsilon\\
        :=&\big(\bigcup_i {w_i} \gamma_i\big)\cup \big(\bigcup_j {w_j}\ell_j \big) \cup \big( \bigcup_k \mathsf{n}_k\big) \cup \big(\bigcup_h \gamma_h\big),  
    \end{split}
    \end{align}
	where $\gamma_i$ are pairwise non-homotopic simple arcs, $\ell_j$ are pairwise non-homotopic simple loops, $\mathsf{n}_k$ are unknots, and $\gamma_h$ are contractible arcs. The exponents satisfy $w_i, w_j > 0$, except that $w_i$ may be negative if $\gamma_i$ is a boundary arc. Furthermore, the components $\gamma_i, \ell_j, \mathsf{n}_k$, and $\gamma_h$ are pairwise disjoint. 
    
	Resolving the crossings in $\mathsf{x}\cdot \mathsf{Y}$ via the skein relations yields:
    \begin{equation}\label{eq:decompose-bd-intersection}
        [\mathsf{x}][\mathsf{Y}]=q^a \sum_{\varepsilon: \mathsf{x} \cap \mathsf{Y} \to \{+, - \}} q^{|{\varepsilon^{-1}(+)}|-|{\varepsilon^{-1}(-)}|} [\mathsf{E_{\varepsilon}}],
    \end{equation}
    where $a \in \frac{1}{2}\mathbb{Z}$ is the \textbf{boundary intersection number}, an exponent that makes the endpoints of $\mathsf{x}\cdot \mathsf{Y}$ simultaneous via the boundary skein relations.
    
	For each crossing $c\in \mathsf{x}\cap \mathsf{Y}$, let $\mathcal{D}_c$ be a small disk neighborhood at $c$ that contains only the two intersecting strands. The smoothing operation at $c$ modifies $\mathsf{x} \cdot \mathsf{Y}$ locally within $\mathcal{D}_c$. 
	The points resulting from the smoothing at $c$ are called \textbf{attachment points}, labeled $\varepsilon(c)c'$ and $\varepsilon(c)c''$ respectively (see Figure~\ref{fig:trace-pts}). These points trace the original crossing locations and their neighborhoods. Thus, the choice of labels $c'$ and $c''$ is arbitrary.
     
    \begin{figure}[H]
    	\centering
    	\begin{tikzpicture}[scale=1.5]
    		\begin{scope}[xshift=-1in]
    			\begin{scope}[xshift=-.5in,scale=.15]
    				\draw[thick] (-2.83,-2.83) to (2.83,2.83);
    				\draw[thick] (-2.83,2.83) to (-.71,.71);
    				\draw[thick] (.71,-.71) to (2.83,-2.83);
    				\node[marked] (1) at (0,0) {} node[right=1pt] {$c$};						
    			\end{scope}
    			\node (=) at (0,0) {$\mapsto$};
    			\begin{scope}[xshift=.5in,scale=.15]
    				\draw[thick] (-2.83,-2.83) to [out=45,in=-45] 
    				    node[marked, pos=0.5] {} node[left=.5pt] 
    				    {$+c'$}
    				(-2.83,2.83);
    				\draw[thick] (2.83,-2.83) to [out=135,in=-135]
    				node[marked, pos=0.5] {} node[right=.5pt] 
    				{$+c''$}
    				(2.83,2.83);
    			\end{scope}
    		\end{scope}
    		
    		\begin{scope}[xshift=1in]
    			\begin{scope}[xshift=-.5in,scale=.15]
    				\draw[thick] (-2.83,-2.83) to (2.83,2.83);
    				\draw[thick] (-2.83,2.83) to (-.71,.71);
    				\draw[thick] (.71,-.71) to (2.83,-2.83);
    				\node[marked] (1) at (0,0) {} node[right=1pt] {$c$};
    			\end{scope}
    			\node (=) at (0,0) {$\mapsto$};
    			\begin{scope}[xshift=.5in,scale=.15]
    				\draw[thick] (-2.83,-2.83) to [out=45,in=135]
    				node[marked, =0.5] {} node[below=.5pt] 
    				{$-c''$}
    			    (2.83,-2.83);
    				\draw[thick] (-2.83,2.83) to [out=-45,in=-135]
    				node[marked, =0.5] {} node[above=.5pt] 
    				{$-c'$}
    				 (2.83,2.83);
    			\end{scope}
    		\end{scope}
    	\end{tikzpicture}
    	\caption{Attachment points}
    	\label{fig:trace-pts}
    \end{figure}

    \begin{defn}[Angle]
    	\label{def:angles}
    	Let $p = \varepsilon(c)c^\bullet$ be an attachment point for crossing $c$ where $\bullet \in \{',''\}$. A strand at $p$ determines an \textbf{angle} at $p$ within the neighborhood $\mathcal{D}_c$, represented by a small shaded sector in Figure~\ref{fig:angles}. The angle has the sign $\varepsilon(c)$, making it \textbf{positive} or \textbf{negative}. We say the strand possesses the angle, or equivalently, that the angle is attached to the strand. Two angles are \textbf{adjacent} if they share a common crossing.
    \end{defn}
    
    \begin{figure}[H]
    	\centering
    	\begin{tikzpicture}[scale=.7]
    		\begin{scope}[xshift=-2in, scale=.5]
    			\draw[dashed] (0,0) circle (4);
    			\begin{scope}
    				\clip (0,0) circle (2.75);
    				\draw[fill=black!20] (-2.83,-2.83) to [out=45,in=-45] 
    				(-2.83,2.83) to (-5,0);
    				\draw[fill=black!20] (2.83,-2.83) to [out=135,in=-135]
    				(2.83,2.83) to (5,0);
    			\end{scope}
    			\draw[thick] (-2.83,-2.83) to [out=45,in=-45] 	
    			node[marked, pos=0.5] {} node[left=2] {$+$} 
    			(-2.83,2.83);
    			\draw[thick] (2.83,-2.83) to [out=135,in=-135]
    			node[marked, pos=0.5] {} node[right=2] {$+$}
    			(2.83,2.83);
    		\end{scope}
    		
    		\begin{scope}[xshift=2in, scale=.5]
    			\draw[dashed] (0,0) circle (4);
    			\begin{scope}
    				\clip (0,0) circle (2.75);
    				\draw[fill=black!20] (-2.83,-2.83) to [out=45,in=135] 
    				(2.83,-2.83) to (0, -5);
    				\draw[fill=black!20] (-2.83,2.83) to [out=-45,in=-135]
    				(2.83,2.83) to (0, 5);
    			\end{scope}
    			\draw[thick] (-2.83,-2.83) to [out=45,in=135]
    			node[marked, =0.5] {} node[below=2] 
    			{$-$}
    			(2.83,-2.83);
    			\draw[thick] (-2.83,2.83) to [out=-45,in=-135]
    			node[marked, =0.5] {} node[above=2] 
    			{$-$}
    			(2.83,2.83);
    		\end{scope}
    	\end{tikzpicture}
    	\caption{Shaded sectors represent the angles at attachment points.}
    	\label{fig:angles}
    \end{figure}

\subsection{Unknots and contractible arcs}\label{sec:3-1}
    Recall that $\mathsf{E}_\varepsilon$ is a term appearing in the expansion of $[\mathsf{x}][\mathsf{Y}]$.
	\begin{defn}[Angle orientation]
		\label{def:angle-orientation}
		Let $\mathsf{e}$ be an unknot or a contractible arc in $\mathsf{E}_\varepsilon$, 
		bounding a disk or monogon $D$. For an attachment point on $\mathsf{e}$, the associated angle is \textbf{inward} if it points into the interior of $D$, and \textbf{outward} if it points into the complement.
	\end{defn}

	\begin{lem}\label{lem:min-angle-count}
		Let $\mathsf{e}$ be a contractible component of $\mathsf{E}_\varepsilon$. 
		\begin{enumerate}
			\item If $\mathsf{e}$ is a contractible arc, it possesses at least one angle.
			\item If $\mathsf{e}$ is an unknot, it possesses at least two angles.
		\end{enumerate}
	\end{lem}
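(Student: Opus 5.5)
The idea is that every component $\mathsf{e}$ of $\mathsf{E}_\varepsilon$ is a concatenation of segments of $\mathsf{x}$ and of $\mathsf{Y}$, glued at attachment points. If $\mathsf{e}$ has too few angles, it is essentially a single original curve, or it is made of one segment of $\mathsf{x}$ and one segment of $\mathsf{Y}$ closed up at few points. In each such situation contractibility of $\mathsf{e}$ contradicts either the simplicity of $\mathsf{x}$ and $\mathsf{Y}$ or the assumption that $\mathsf{x}\cdot\mathsf{Y}$ is in minimal position. The proof therefore counts angles and rules out the small cases.

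\emph{Case (1), contractible arc.} Suppose $\mathsf{e}$ is a contractible arc with no angle. Then $\mathsf{e}$ passes through no attachment point. Away from the disks $\mathcal{D}_c$, $\mathsf{E}_\varepsilon$ agrees with $\mathsf{x}\cup\mathsf{Y}$, so $\mathsf{e}$ is a whole component of $\mathsf{x}$ or of $\mathsf{Y}$. Since $\mathsf{x}$ and $\mathsf{Y}$ are simple multicurves, they have no contractible components, which is a contradiction.

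\emph{Case (2), unknot with no angle.} The same argument shows that $\mathsf{e}$ would be a loop component of $\mathsf{Y}$, which is not null-homotopic. This is again a contradiction.

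\emph{Case (2), unknot with exactly one angle.} Here $\mathsf{e}$ passes through exactly one attachment point $p=\varepsilon(c)c^\bullet$. At a smoothing, the strand through $p$ switches from $\mathsf{x}$ to $\mathsf{Y}$. Traversing the closed curve $\mathsf{e}$, we leave $p$ along one of the original curves and must return to $p$ without passing another attachment point. Hence $\mathsf{e}$ consists of one segment of $\mathsf{x}$ and one segment of $\mathsf{Y}$, and these two segments would need to switch twice, which requires two attachment points. The remaining possibility is a single segment starting and ending at $p$ on the same original curve. That segment would have to pass through the crossing $c$ twice. This is impossible, because each smoothing at $c$ connects an $\mathsf{x}$-half-strand to a $\mathsf{Y}$-half-strand. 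Therefore an unknot has at least two angles.

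The main obstacle is making this parity argument precise. I would formalize it by observing that along $\mathsf{e}$ the label (``on $\mathsf{x}$'' versus ``on $\mathsf{Y}$'') alternates exactly at attachment points. On a closed curve the number of switches is then even, and a closed curve with zero switches was already excluded, so the number of angles is even and positive. For arcs, the endpoints lie on $\mathcal{M}$, so zero switches is the only case that needs excluding. I would also state the precise sense of ``possesses an angle'': each attachment point on $\mathsf{e}$ contributes exactly one angle to the strand of $\mathsf{e}$ through it, by Definition~\ref{def:angles}. With this, counting attachment points along $\mathsf{e}$ is the same as counting angles.

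If the alternation claim fails because a $\mathsf{Y}$–$\mathsf{Y}$ crossing could arise, I would note that $\mathsf{Y}$ is simple, so all crossings lie in $\mathsf{x}\cap\mathsf{Y}$ and the claim holds. As a backup argument for the two-angle bound, minimal position (no bigons) rules out a single $\mathsf{x}$-segment and a single $\mathsf{Y}$-segment bounding a disk. That gives an alternative route, though it is not needed for the stated bound.
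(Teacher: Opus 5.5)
Your proof is correct and follows essentially the same argument as the paper's. A contractible component cannot be a component of the simple multicurves $\mathsf{x}$ or $\mathsf{Y}$, so it must pass through at least one attachment point if it is an arc and at least two if it is an unknot. Your parity observation, that each smoothing joins an $\mathsf{x}$-half-strand to a $\mathsf{Y}$-half-strand so a closed component carries an even number of attachment points, is a more precise justification of the paper's brief remark that an unknot ``cannot be formed from a single segment.''
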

	
	\begin{proof}
		Since the contractible component $\mathsf{e}$ is not a component of the original simple multicurves $\mathsf{x}$ or $\mathsf{Y}$, it must be formed by segments of $\mathsf{x}$ and $\mathsf{Y}$ joined via the smoothing of crossings in $\mathsf{x} \cap \mathsf{Y}$. Each such junction corresponds to an attachment point. 
        
        If $\mathsf{e}$ is a contractible arc, it must contain at least one such junction, and thus possesses at least one attachment point and its associated angle. If $\mathsf{e}$ is an unknot, the simplicity of $\mathsf{x}$ and $\mathsf{Y}$ implies that $\mathsf{e}$ cannot be formed from a single segment; it must consist of at least two segments joined at two or more distinct attachment points. Consequently, $\mathsf{e}$ possesses at least two distinct angles.
	\end{proof}

       To describe the merging and splitting of components across different $\mathsf{E}_\varepsilon$, we introduce the following notions.
	\begin{defn}[Strip]
		\label{def:strip}
        Let $\mathsf{e}$ be a contractible component (either an unknot or a contractible arc) of $\mathsf{E_{\varepsilon}}$. If for some crossing $c$, the intersection $\mathsf{e} \cap \mathcal{D}_c$ contains two outward angles, then $\mathsf{e} \cap \mathcal{D}_c$ is called a \textbf{strip}. In this case, we say $\mathsf{e}$ {admits a strip} at crossing $c$.
	\end{defn}
	
    \begin{defn}[Strip operations]
    	\label{def:strip-ops}
    	Let $\mathsf{e}$ be a contractible component of $\mathsf{E}_{\varepsilon}$ 
    	admitting a strip at crossing $c$.
    	\begin{enumerate}
    		\item \textbf{Strip removing}: 
    		Replace the smoothing at $c$ with the alternative one to obtain:
    		\begin{itemize}
    			\item Two unknots $\mathsf{e}_1, \mathsf{e}_2$, if $\mathsf{e}$ is an unknot;
    			\item A contractible arc $\mathsf{e}_1$ and an unknot $\mathsf{e}_2$, if $\mathsf{e}$ is a contractible arc.
    		\end{itemize}
    		
    		\item \textbf{Strip attaching}: 
    		The inverse operation that recovers $\mathsf{e}$ from $\mathsf{e}_1$ and $\mathsf{e}_2$ 
    		by switching the smoothing at $c$.
    	\end{enumerate} 
    \end{defn} 
    Figure~\ref{fig:strip-strip ops} illustrates these operations, where the hatched regions represent the disks or monogons bounded by contractible components. Note that if $\mathsf{e}$ is a component of $\mathsf{E}_{\varepsilon}$, then $\mathsf{e}_1$ and $\mathsf{e}_2$ are components of the multicurve $\mathsf{E}_{\varepsilon'}$ obtained by switching the smoothing at $c$.
    
    \begin{figure}[H]
    	\centering
    	\begin{tikzpicture}[scale=.7]
    		\begin{scope}[xshift=-2in, scale=.5]
    			\begin{scope}
    				\clip (0,0) circle (2.75);
    				\draw[fill=black!20] (-2.83,-2.83) to [out=45,in=-45] 
    				(-2.83,2.83) to (-5,0);
    				\draw[fill=black!20] (2.83,-2.83) to [out=135,in=-135]
    				(2.83,2.83) to (5,0);
    			\end{scope}
             \clip (0,0) circle (4);
    		 \draw[pattern=north east lines, pattern color=black!40, thick] (-2.83,-2.83) to [out=45,in=-45] 	
    			node[marked, pos=0.5] {} 
    			(-2.83,2.83) to (-2.83, 5) to (2.83, 5)
    			to (2.83,2.83) to [out=-135,in=135]
    			node[marked, pos=0.5] {} 
    			(2.83,-2.83) to (2.83, -5) to (0, -5);
    		\draw[dashed] (0,0) circle (4);
    		\end{scope}
 \node[align=center] (=) at (0,0) {$\xrightarrow{\text{strip removing}}$\\\\$\xleftarrow{\text{strip attaching}}$};   		
    		\begin{scope}[xshift=2in, scale=.5]  			
    			\begin{scope}
    				\clip (0,0) circle (2.75);
    				\draw[fill=black!20] (-2.83,-2.83) to [out=45,in=135] 
    				(2.83,-2.83) to (0, -5);
    				\draw[fill=black!20] (-2.83,2.83) to [out=-45,in=-135]
    				(2.83,2.83) to (0, 5);
    			\end{scope}
    			\clip (0,0) circle (4);
    			\draw[pattern=north east lines, pattern color=black!40, thick] (-2.83,-2.83) to [out=45,in=135]
    			node[marked, =0.5] {} 
    			(2.83,-2.83) to (2.83, -5) to (-2.83, -5);
    			\draw[pattern=north east lines, pattern color=black!40, thick] (-2.83,2.83) to [out=-45,in=-135]
    			node[marked, =0.5] {} 
    			(2.83,2.83) to (2.83, 5) to (-2.83, 5);
                \draw[dashed] (0,0) circle (4);
    		\end{scope}
    	\end{tikzpicture}
    	\caption{Strips and strip operations}
    	\label{fig:strip-strip ops}
    \end{figure}
    
   	Let $\mathsf{e}$ be a contractible component of $\mathsf{E}_{\varepsilon}$, and let $D$ be the disk or monogon bounded by $\mathsf{e}$. For ease of exposition, we recall the tubular neighborhood decomposition of $D$ in \cite[Proof of Lemma C.1]{muller2016skein}. This decomposition serves as a convenient framework; alternatively, the arguments below could be carried out by only analyzing the attachment points. Specifically, we choose a thin tubular neighborhood $\mathcal{T}$ of $\mathsf{x}$ satisfying:
    \begin{enumerate}
   	    \item $\mathsf{Y}$ intersects $\mathcal{T}$ a minimal number of times.
   	    \item $\mathcal{T}$ contains the neighborhoods $\mathcal{D}_c$ for all crossings $c \in \mathsf{x} \cap \mathsf{Y}$.
   \end{enumerate}
   
   \begin{figure}[H]
   	   \centering
   	   \begin{tikzpicture}[scale=1.1]
   	   	   \begin{scope}[xshift=-1.5in]
   	   	   \begin{scope}[xshift=-.75in,scale=.15]
   	   	   	\draw[fill=gray!10,dashed] (-4,-4)  to (4,-4) arc (-90:90:4) to (-4,4) arc (90:270:4);
   	   	   	\draw[thick] (-4,4) to (-4,1);
   	   	   	\draw[thick] (-4, -4) to (-4,-1);
   	   	   	\draw[thick] (4,4) to (4,1);
   	   	   	\draw [thick] (4,-4) to (4,-1);
   	   	   	\draw[thick] (-8,0) to (8,0);
   	   	   \end{scope}
   	   	   \node (=) at (0,0) {$\mapsto$};
   	   	   \begin{scope}[xshift=.75in,scale=.15]
   	   	   	\draw[fill=gray!10,dashed] (-4,-4)  to (4,-4) arc (-90:90:4) to (-4,4) arc (90:270:4);
   	   	   	\draw[thick] (-8,0) to [out=0,in=270] (-4,4);
   	   	   	\draw[thick] (-4,-4) to [out=90,in=180] (0,0) to [out=0,in=270] (4,4);
   	   	   	\draw[thick] (4,-4) to [out=90,in=180] (8,0);
   	   	   \end{scope}
   	   	   \end{scope}
   	   	   
   	   	   \begin{scope}[xshift=1.5in]
   	   	   \begin{scope}[xshift=-.75in,scale=.15]
   	   	   	\draw[fill=gray!10,dashed] (-4,-4)  to (4,-4) arc (-90:90:4) to (-4,4) arc (90:270:4);
   	   	   	\draw[thick] (-4,4) to (-4,1);
   	   	   	\draw[thick] (-4, -4) to (-4,-1);
   	   	   	\draw[thick] (4,4) to (4,1);
   	   	   	\draw [thick] (4,-4) to (4,-1);
   	   	   	\draw[thick] (-8,0) to (8,0);
   	   	   \end{scope}
   	   	   \node (=) at (0,0) {$\mapsto$};
   	   	   \begin{scope}[xshift=.75in,scale=.15]
   	   	   	\draw[fill=gray!10,dashed] (-4,-4)  to (4,-4) arc (-90:90:4) to (-4,4) arc (90:270:4);
   	   	   	\draw[thick] (-8,0) to [out=0,in=270] (-4,4);
   	   	   	\draw[thick] (-4,-4) to [out=90,in=180] (0,0) to [out=0,in=90] (4,-4);
   	   	   	\draw[thick] (4,4) to [out=270,in=180] (8,0);
   	   	   \end{scope}
   	   	   \end{scope}
   	   \end{tikzpicture}
   	   \caption{Local pictures of $\mathcal{T}$ after smoothing, up to reflection across $\mathsf{x}$}
       \label{Fig:local-pic}
   \end{figure}
   
   We define a graph $\Gamma$ as follows:
   \begin{itemize}
   	   \item \textbf{Vertices} correspond to connected components of $D \setminus \partial\mathcal{T}$.
   	   \item \textbf{Edges} connect vertices whose corresponding components share a common boundary in $D \cap \partial\mathcal{T}$.
   \end{itemize}
   It follows that $\Gamma$ is a tree (\cite[Proof of Lemma C.1]{muller2016skein}). Since $\mathsf{e} \cap \mathcal{T} \not= \emptyset$  but $\mathsf{e} \not\subset \mathcal{T}$, the following properties hold:
   \begin{enumerate}
   	   \item There is at least one $\mathcal{T}$-intersecting component.
   	   \item There is at least one $\mathcal{T}$-disjoint component.
   	   \item $\Gamma$ contains at least two leaves (vertices of degree $1$).
   \end{enumerate}
   
   Each leaf component $D_0 \subset D \setminus \partial\mathcal{T}$ is either Type A or Type B, as illustrated in Figure~\ref{fig:leaf-components}. In the figure, $\mathcal{T}$ is shown in gray and $D_0$ is the hatched region:
   \begin{description}
      	   \item[Type A] \textbf{$D_0 \subset \mathcal{T}$ and $\partial D_0$ contains no marked points.} This implies that $\partial D_0$ contains a segment from $\mathsf{x} \setminus \mathsf{Y}$ whose endpoints are attachment points with opposite signs. These are the only attachment points on $\partial  D_0$, and both associated angles are inward. Furthermore, in the clockwise order of $\partial D_0$, these signs appear in the order $(+, -)$.
       
   	   \item[Type B] \textbf{$\partial D_0$ contains a marked point.} This implies that $\mathsf{e}$ is a contractible arc. When such a component exists, it is unique.
   \end{description}
   	
   \begin{figure}[H]
   	   \centering
   	   \begin{tikzpicture}[scale=.7]
   		   \begin{scope}[xshift=-2.5in,scale=.4]
   			   \draw[dashed, fill=gray!10, thin] (-4,-4)  to (4,-4) arc (-90:90:4) to (-4,4) arc (90:270:4);
   			   \draw[thick, dashed] (-8,0) to [out=0,in=270] (-4,4);
   			   \draw[pattern=north east lines, pattern color=black!40, thick] (-4,-4) to [out=90,in=180]
   			   (0,0) to [out=0,in=90] (4,-4);
   			   \draw[thick] (-4,-4) to [out=90,in=180] node[marked, pos=.5] {} node[above left] {$+$}
   			   (0,0);
   			   \draw [thick] (0,0) to [out=0,in=90] node[marked, =.5] {} node[above right] {$-$} (4,-4);
   			   \draw[thick, dashed] (4,4) to [out=270,in=180] (8,0);
   		   \end{scope}
   			
   		   \begin{scope}[xshift=2.5in,scale=.4]
   			   \begin{scope}
   				   \clip (-4,-4)  to (4,-4) arc (-90:90:4) to (-4,4) arc (90:270:4);
   				   \draw[thin] (-12,-8) to [out=0,in=270] (-4,0) to [out=90,in=0] (-12,8) to [line to] (12,8) to (12,-8) to (-12,-8);
   				   \node[marked] (1) at (-4,0) {};
   				   \fill[pattern=north east lines, pattern color=black!40] (4,-4) to (1) to (4,4) to (4,-4);
   				   \fill[gray!10] (4,-4) arc (-90:90:4) to (4,4);
   				   \draw [thick] (4, -4) to (-4, 0) to (4, 4);
   				   \draw[dashed, thick] (4,4) to (4, -4);
   			   \end{scope}
   			   \draw[dashed, thin] (-4,-4)  to (4,-4) arc (-90:90:4) to (-4,4) arc (90:270:4);
   		   \end{scope}
   	   \end{tikzpicture}
   	   \caption{Leaf components of Type A (left) and Type B (right). $\mathcal{T}$ is shown in gray and $D_0$ is hatched.}
   	   \label{fig:leaf-components}
   \end{figure}
   	
   The tubular neighborhood decomposition yields the following constraint on the components of $\mathsf{E}_\varepsilon$:
   \begin{lem}[{\cite[Lemma C.1]{muller2016skein}}]\label{lem:Muller-C1}
       For any $\mathsf{E}_\varepsilon$, we have
       \begin{equation*}
    	   |\{\text{unknots}\}| + \frac{1}{2}|\{\text{contractible arcs}\}| \leq |\{\text{negative smoothings}\}|,
       \end{equation*}
       where equality holds if and only if every strand of each $-$ smoothing belongs to a contractible component.
   \end{lem}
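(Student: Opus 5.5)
We would prove Lemma \ref{lem:Muller-C1} by charging each contractible component of $\mathsf{E}_\varepsilon$ to negative smoothings via the leaf structure of the tree $\Gamma$, and then checking that no negative smoothing is charged more than once.

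\textbf{Charging.} Fix a contractible component $\mathsf{e}$ bounding a disk or monogon $D$, and consider the tree $\Gamma$ from the tubular neighborhood decomposition.
\begin{itemize}
\item If $\mathsf{e}$ is an unknot, $\Gamma$ has at least two leaves and none is of Type B. So $\mathsf{e}$ has at least two Type A leaves.
\item If $\mathsf{e}$ is a contractible arc, there is at most one Type B leaf. Since $\Gamma$ has at least two leaves, $\mathsf{e}$ has at least one Type A leaf.
\end{itemize}
Each Type A leaf $D_0$ is bounded, inside $\mathcal{T}$, by a segment of $\mathsf{x}\setminus\mathsf{Y}$ whose endpoints are attachment points of signs $(+,-)$ with both angles inward. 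We assign to $D_0$ the crossing carrying its $-$ endpoint, and more precisely the specific strand of that $-$ smoothing lying on $\partial D_0$. An unknot thus receives at least two $-$ strands and a contractible arc at least one.

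\textbf{Injectivity.} A single $-$ smoothing at a crossing $c$ produces exactly two strands in $\mathcal{D}_c$. We need that each such strand is assigned to at most one Type A leaf overall. A strand belongs to exactly one component $\mathsf{e}$, and within $D$ it bounds at most one leaf of Type A. This holds because the leaf lies on the inward side of the angle at that attachment point, and there is only one inward side, determined by the shaded sector. So the map from Type A leaves to $-$ strands is injective.

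\textbf{Counting.} Counting strands gives
\[
2\,|\{\text{unknots}\}| + |\{\text{contractible arcs}\}| \leq \#\{\text{Type A leaves}\} \leq 2\,|\{\text{negative smoothings}\}|,
\]
and dividing by two yields the inequality.

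\textbf{Equality.} Equality forces each of the following:
\begin{itemize}
\item every unknot has exactly two leaves and every contractible arc exactly one Type A leaf;
\item the injection onto the set of $-$ strands is a bijection, so every strand of every $-$ smoothing belongs to a contractible component.
\end{itemize}
Conversely, suppose every $-$ strand lies on a contractible component. The inward angle at such a strand then lies in the region of $D$ adjacent to a segment of $\mathsf{x}$. We would argue that this region is a Type A leaf, by following the boundary of $D$ inside $\mathcal{T}$ along $\mathsf{x}$ to the next attachment point and using Figure \ref{Fig:local-pic}. Combined with a leaf count (each leaf has exactly one $-$ endpoint), this shows the bound is attained.

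\textbf{Main obstacle.} The delicate step is this converse: showing that a $-$ strand on a contractible component always terminates a Type A leaf, rather than an interior vertex of $\Gamma$. This would be handled by the minimality of $\mathsf{x}\cdot\mathsf{Y}$ (no bigons), as in \cite[Lemma C.1]{muller2016skein}. We expect to rely on that reference for this local analysis.
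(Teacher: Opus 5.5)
Your charging argument for the inequality is correct. So is the deduction that equality forces every strand of every $-$ smoothing onto a contractible component. This is the leaf count behind Muller's Lemma C.1, which the paper cites after setting up the tree $\Gamma$ and the Type A/B leaves.

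The converse, however, cannot be completed by any local analysis, because it is false as stated. Modify the paper's $\mathbb{A}_{1,1}$ example so that $\mathsf{y}$ spirals two more times. It then meets $\mathsf{x}$ in five crossings $c_1,\dots,c_5$, ordered along $\mathsf{x}$. The arcs stay in minimal position, since they lift to straight segments in the universal cover.
\begin{itemize}
\item Exactly as in the paper's example, $\varepsilon=(+,-,+,-,+)$ produces the boundary arcs $\mathsf{b}_1,\mathsf{b}_2$ and two beads $\mathsf{n}_1,\mathsf{n}_2$. These are cyclic chains with $t_{\mathsf{n}_1}=c_2$ and $t_{\mathsf{n}_2}=c_4$.
\item The two strands of the $+$ smoothing at $c_3$ lie one on each bead, both with inward angles.
\item Attaching the strip at $c_3$, i.e.\ passing to $\varepsilon=(+,-,-,-,+)$, yields $\mathsf{E}_\varepsilon=\mathsf{b}_1\cup\mathsf{b}_2\cup\mathsf{n}$. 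Here $\mathsf{n}$ is a single unknot containing both strands of each of the three $-$ smoothings.
\end{itemize}
So the hypothesis of the converse holds, yet the left-hand side is $1$ and the right-hand side is $3$.

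Your sketch of the converse fails at two points, both visible in this example.
\begin{itemize}
\item A $-$ strand on a contractible component need not have an inward angle. The two $-$ strands at $c_3$ form a strip of $\mathsf{n}$; their angles are outward, and they terminate no Type A leaf.
\item Even when every $-$ strand does terminate a Type A leaf, you only get $\#\{\text{Type A leaves}\}=2|\varepsilon^{-1}(-)|$. Equality in the lemma also requires every unknot to have exactly two Type A leaves and every contractible arc exactly one. Nothing forces this: $\mathsf{n}$ has four.
\end{itemize}
Write your chain of inequalities as $2|\Unknots_\varepsilon|+|\Unarcs_\varepsilon|\le\sum_{\mathsf{e}}\#\{\text{$-$ strands on }\mathsf{e}\}\le 2|\varepsilon^{-1}(-)|$, with $\mathsf{e}$ running over contractible components. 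What it actually characterizes is the following: equality holds if and only if every $-$ strand lies on a contractible component and each unknot (resp.\ contractible arc) carries exactly two (resp.\ one) $-$ strands.

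Only the inequality is invoked later in the paper, in \eqref{eq:q-deg-inequality} and Proposition \ref{prop:leading-g}. You should therefore prove the inequality together with the forward implication, or with this corrected criterion, rather than the stated converse.
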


\begin{lem}\label{lem:unknot-neg-angles}
    Let $\mathsf{C}$ be an unknot component of some multicurve $\mathsf{E}_{\varepsilon}$. Then $\mathsf{C}$ possesses at least two negative attachment points.
\end{lem}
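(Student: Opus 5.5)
We will use the tubular neighborhood decomposition just recalled. Let $D$ be the disk bounded by the unknot $\mathsf{C}$, and let $\Gamma$ be the tree whose vertices are the components of $D\setminus\partial\mathcal{T}$. Since $\mathsf{C}$ is an unknot, its boundary contains no marked point. So no leaf component is of Type B, and every leaf of $\Gamma$ is of Type A. We already know that $\Gamma$ has at least two leaves, because $\mathsf{C}\cap\mathcal{T}\neq\emptyset$ but $\mathsf{C}\not\subset\mathcal{T}$.

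The key steps, in order, are as follows.

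\textbf{First}, from the description of Type A, each leaf component $D_0$ has on $\partial D_0$ a segment of $\mathsf{x}\setminus\mathsf{Y}$. The two endpoints of this segment are attachment points of opposite signs, and these are the only attachment points on $\partial D_0$. In particular, each leaf contributes exactly one negative attachment point lying on $\mathsf{C}$. This point is the $-$ end of the $(+,-)$ pair, and its angle points into $D$.

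\textbf{Second}, we show that distinct leaves contribute distinct negative attachment points. Distinct leaf components are disjoint open regions of $D\setminus\partial\mathcal{T}$. An attachment point $-c^\bullet$ on $\mathsf{C}$ has a single inward angle in $\mathcal{D}_c\subset\mathcal{T}$. That angle is a small sector, so it lies in the closure of exactly one component of $D\setminus\partial\mathcal{T}$ inside $\mathcal{T}$. Hence the negative point attached to one Type A leaf cannot also be the negative point of another leaf. Two leaves therefore give at least two distinct negative attachment points on $\mathsf{C}$.

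The main obstacle is the degenerate situation in which the two leaves might share their segment or their attachment points. An example would be a tree $\Gamma$ consisting of a single edge whose two endpoints are both Type A. This cannot happen, because both vertices would lie in $\mathcal{T}$, and then there would be no $\mathcal{T}$-disjoint component. That contradicts property (2), so the argument above applies.

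A more careful worry is that both leaf segments might end at the two attachment points $-c'$ and $-c''$ of the same crossing. This is harmless, since these are still two distinct negative attachment points.

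If the tree argument proved delicate, we would fall back on a direct count. Walk along $\mathsf{C}$ and record the alternation between $\mathsf{x}$-segments and $\mathsf{Y}$-segments. By minimal position there are no bigons between $\mathsf{x}$ and $\mathsf{Y}$. Using this, we would argue that an unknot built using at most one $-$ smoothing would force such a bigon or a self-intersection of a simple curve. Lemma \ref{lem:min-angle-count} then supplies the second attachment point.
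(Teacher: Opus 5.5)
Your main argument is correct and is essentially the paper's proof. It uses at least two leaves of $\Gamma$, all of Type A since $\mathsf{C}$ contains no marked point, each carrying exactly one negative attachment point; your remark that each attachment point on $\mathsf{C}$ lies in a single component of $D\setminus\partial\mathcal{T}$ makes explicit the distinctness the paper leaves implicit.

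Drop the fallback sketch, though. An unknot can arise from a single $-$ smoothing: the cyclic chain for $\varepsilon=(+,-,+)$ in the $\mathbb{A}_{1,1}$ example carries both $-c'$ and $-c''$. Also, Lemma \ref{lem:min-angle-count} only yields a second angle, not a second negative one.
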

\begin{proof}
    Let $\mathcal{T}$ be a thin tubular neighborhood of $\mathsf{x}$, and let $D$ be the disk bounded by $\mathsf{C}$. As $\Gamma$ is a tree and $\mathsf{C}$ is not contained in $\mathcal{T}$, the decomposition $D \setminus \partial\mathcal{T}$ must have at least two leaf components. Since $\mathsf{C}$ has no marked points, every such leaf component is of Type A. Each Type A component contains exactly one negative attachment point according to its $(+, -)$ sign configuration. Consequently, $\mathsf{C}$ possesses at least two negative attachment points.
\end{proof}

The proof of Lemma~\ref{lem:unknot-neg-angles} also yields the following result.

\begin{prop}\label{prop:unknot-attatchmentpoints}
Let $\mathsf{C}$ be a contractible component of some multicurve $\mathsf{E}_{\varepsilon}$. If $\mathsf{C}$ is an unknot, it possesses at least four attachment points. If $\mathsf{C}$ is a contractible arc, it possesses at least two attachment points.
\end{prop}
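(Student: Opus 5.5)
We reuse the tubular neighborhood decomposition from the proof of Lemma~\ref{lem:unknot-neg-angles}: fix a thin tubular neighborhood $\mathcal{T}$ of $\mathsf{x}$, let $D$ be the disk or monogon bounded by $\mathsf{C}$, and consider the tree $\Gamma$ whose vertices are the components of $D\setminus\partial\mathcal{T}$. The plan is to count attachment points contributed by the leaves of $\Gamma$, and to check that distinct leaves contribute distinct attachment points.

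\emph{Unknot case.} Since $\mathsf{C}$ contains no marked point, every leaf of $\Gamma$ is of Type A, and $\Gamma$ has at least two leaves. By the description of Type A, the boundary of each such leaf $D_0$ contains a segment of $\mathsf{x}\setminus\mathsf{Y}$ whose two endpoints are attachment points, with signs $(+,-)$ in clockwise order, and these are the only attachment points on $\partial D_0$. This gives two attachment points per leaf, hence at least four in total, provided the points of different leaves are distinct.

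\emph{Contractible arc case.} Here $\Gamma$ still has at least two leaves, and at most one of them is of Type B. Hence there is at least one Type A leaf, which contributes two attachment points. Alternatively, one can argue as in Lemma~\ref{lem:min-angle-count}: the Type B leaf already forces at least one junction, while the Type A leaf gives two points. Either route yields at least two attachment points.

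\emph{Main obstacle: distinctness.} The essential step is showing that two different Type A leaves $D_0\neq D_1$ cannot share an attachment point. Each attachment point of a leaf lies on $\partial D_0\cap\mathsf{C}$, inside $\mathcal{D}_c$, with its angle pointing into $D_0$. The inward angle at an attachment point determines a unique local sector of $D$ near that point, and this sector lies in a unique component of $D\setminus\partial\mathcal{T}$. Therefore the same point with the same inward angle cannot bound two different leaf components. Even if the two leaves came from the same crossing $c$, the points $c'$ and $c''$ are distinct, so the count is unaffected. With distinctness in hand, summing over leaves gives the bounds $4$ and $2$ respectively.
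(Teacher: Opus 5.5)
Your proposal is correct and follows essentially the same route as the paper: each Type A leaf of the tree $\Gamma$ carries exactly two attachment points, an unknot yields at least two such leaves, and a contractible arc (which has at most one Type B leaf) yields at least one. Your explicit check that distinct leaves cannot share an attachment point fills in a step the paper leaves implicit. The parenthetical alternative via a Type B leaf is unnecessary, since such a leaf need not exist, but it is harmless because the Type A leaf alone gives the bound.
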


\begin{proof}
Let $D$ be the disk or monogon bounded by $\mathsf{C}$, and let $\mathcal{T}$ be a thin tubular neighborhood of $\mathsf{x}$. Note that $D\setminus\partial \mathcal{T}$ has at least two leaf components if $D$ is a disk, or at least one leaf component of Type A if $D$ is a monogon. Since each leaf component of Type A contains exactly two attachment points, $\mathsf{C}$ possesses at least four attachment points in the former case and at least two in the latter. The desired claim follows.
\end{proof}
  Consequently, we obtain the following:
        \begin{itemize}
            \item If an unknot $\mathsf{e}$ possesses exactly two attachment points, then $\mathsf{x}$ and $\mathsf{Y}$ are not in minimal position.
            \item If a contractible arc $\mathsf{e}$ possesses exactly one attachment point, then $\mathsf{x}$ and $\mathsf{Y}$ are not in minimal position.
        \end{itemize} 

\begin{lem}\label{lem:leading-term-no-new-loop}
    Assume $\mathsf{x}$ and $\mathsf{Y}$ are in minimal position. Let $\ell$ be a loop in the marked surface $\Sigma$ that is disjoint from $\mathsf{x} \cup \mathsf{Y}$ and not freely homotopic to any component of $\mathsf{x} \cup \mathsf{Y}$. Then $\mathsf{E}_{\varepsilon_+}$ does not contain any loop component $\ell'$ that is isotopic to $\ell$.
\end{lem}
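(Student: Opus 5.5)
We argue by contradiction. Suppose $\mathsf{E}_{\varepsilon_+}$ contains a loop component $\ell'$ isotopic to $\ell$. Since $\ell$ is disjoint from $\mathsf{x}\cup\mathsf{Y}$ and is not freely homotopic to any component of $\mathsf{x}\cup\mathsf{Y}$, the loop $\ell'$ is not a component of $\mathsf{x}$ or of $\mathsf{Y}$. Hence it is assembled from segments of $\mathsf{x}$ and $\mathsf{Y}$ joined at attachment points of positive smoothings. Since $\mathsf{x}$ is an arc, $\ell'$ must contain at least one segment of $\mathsf{x}$; by simplicity of $\mathsf{Y}$, it also contains at least one segment of $\mathsf{Y}$. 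Consequently $\ell'$ alternates between $\mathsf{x}$-segments and $\mathsf{Y}$-segments, with every junction a $+$ smoothing.

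The key observation concerns the side on which the angles lie. At a $+$ smoothing, the two attachment points carry angles that, viewed along the resolved strand, always lie on a side determined by the orientation of $\mathcal{S}$. Orient $\ell'$ so that, traversing it, every turn at a $+$ junction is in the same rotational direction. This is because $+$ smoothing always connects an $\mathsf{x}$-strand to the $\mathsf{Y}$-strand that lies (say) counterclockwise from it, the over-strand always being $\mathsf{x}$. Thus all positive angles on $\ell'$ point to the same side of $\ell'$, say into the region $R$ to its left.

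Now use that $\ell$, hence $\ell'$, is disjoint from $\mathsf{x}\cup\mathsf{Y}$ after isotopy. Pushing $\ell'$ slightly to its left inside the tubular neighbourhood of $\mathsf{E}_{\varepsilon_+}$ gives a curve $\ell''$ isotopic to $\ell$. Near each junction, $\ell''$ must cross the original $\mathsf{x}$ or $\mathsf{Y}$ strands, since the angle region contains the corner of the crossing. On the other hand, $\ell''$ is homotopic to $\ell$, which is disjoint from $\mathsf{x}\cup\mathsf{Y}$. By minimality of intersection (bigon criterion for simple curves), the crossings of $\ell''$ with $\mathsf{x}\cup\mathsf{Y}$ must then cobound bigons. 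One wants to show these bigons would force a bigon between $\mathsf{x}$ and $\mathsf{Y}$, contradicting minimal position.

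A cleaner route, which I would try first, is via the algebraic intersection with $\mathsf{x}$. The pushed-off curve $\ell''$ crosses $\mathsf{x}$ once per $\mathsf{x}$-segment endpoint region, and all these crossings have the same sign because all turns go the same way. Thus the algebraic intersection of $\ell$ with the arc $\mathsf{x}$ (relative to boundary) is nonzero. This contradicts $\ell\cap\mathsf{x}=\emptyset$, since algebraic intersection number is a homotopy invariant.

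The main obstacle is making the sign consistency precise: one must verify carefully from the Kauffman relation and the convention that $\mathsf{x}$ is over that every $+$ junction turns the same way relative to the traversal direction of $\ell'$. One must also check that the local crossings of the push-off with $\mathsf{x}$ genuinely all carry the same sign, rather than cancelling between an entry into and an exit from an $\mathsf{x}$-segment. If the crossings do cancel in pairs, I would fall back on the bigon argument. In that case, the region between $\ell''$ and an $\mathsf{x}$-segment together with adjacent $\mathsf{Y}$-segments yields a disk whose boundary consists of one $\mathsf{x}$-piece and one $\mathsf{Y}$-piece, i.e.\ a bigon, contradicting minimal position.
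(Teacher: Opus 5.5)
Your reduction to the case where $\ell'$ contains both $\mathsf{x}$- and $\mathsf{Y}$-segments is fine. However, the key observation behind your main route is false. The local rule is right: with $\mathsf{x}$ over, a $+$ smoothing joins each end of $\mathsf{x}$ to the end of $\mathsf{Y}$ a quarter turn away in a fixed rotational direction. It does not follow that all turns along a traversal of $\ell'$ go the same way. As you traverse $\ell'$, the junctions alternate between $\mathsf{Y}\to\mathsf{x}$ and $\mathsf{x}\to\mathsf{Y}$, and these turn in opposite directions. In Figure~\ref{Fig:local-pic} (left), the piece from $(-4,-4)$ through $(0,0)$ to $(4,4)$ turns right and then left.

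So the angles on $\ell'$ alternate sides, and every component of $\ell'\cap\mathcal{T}$ is a zigzag $\mathsf{Y}$-$\mathsf{x}$-$\mathsf{Y}$ piece crossing $\mathcal{T}$ from one side to the other. Equal turning directions would instead give U-shaped pieces that enter and leave $\mathcal{T}$ on the same side. That is what happens in the right-hand picture of Figure~\ref{Fig:local-pic}, where one of the two crossings is smoothed negatively.

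Even with the correct local picture, the algebraic-intersection route cannot work. Consecutive pieces of $\ell'$ may cross $\mathsf{x}$ in opposite directions. A $\mathsf{Y}$-segment leaving $\mathsf{x}$ on one side can return on the same side without forming a bigon, by running around the topology of $\Sigma$. The signed count can then vanish while the geometric intersection does not. Your fallback paragraph names the right target, a bigon between $\mathsf{x}$ and $\mathsf{Y}$, but gives no mechanism that produces it.

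The missing ingredient is the traversal property, which is what the paper's proof uses. It must be combined with minimal position of $\mathsf{x}$ and $\mathsf{Y}$ through an innermost-bigon argument, as follows.

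\begin{enumerate}
\item After a small perturbation, $\ell'$ is transverse to $\mathsf{x}$ and meets it exactly once in each component of $\ell'\cap\mathcal{T}$, hence at least once.
\item If $\mu(\ell',\mathsf{x})=\mu(\ell,\mathsf{x})=0$, the bigon criterion gives an innermost bigon with sides $\alpha\subset\ell'$ and $\beta\subset\mathsf{x}$, where the interior of $\alpha$ misses $\mathsf{x}$.
\item Near both corners, $\alpha$ leaves $\mathcal{T}$ on the same side of $\mathsf{x}$. By the traversal property it cannot re-enter $\mathcal{T}$ in between without crossing $\mathsf{x}$.
\item Hence $\alpha\setminus\mathcal{T}$ is a single $\mathsf{Y}$-segment joining two distinct crossings $c_1\neq c_2$ from the same side of $\mathsf{x}$. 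Together with the segment of $\mathsf{x}$ between $c_1$ and $c_2$, it bounds an embedded disk.
\item This segment meets $\mathsf{x}$ only at its ends, and $\mathsf{Y}$ is simple. So any $\mathsf{Y}$-strand entering this disk cuts off a smaller disk of the same kind, and an innermost one is an empty bigon between $\mathsf{x}$ and $\mathsf{Y}$. This contradicts minimal position.
\end{enumerate}

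Therefore $\mu(\ell',\mathsf{x})>0=\mu(\ell,\mathsf{x})$, which is the required contradiction.
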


\begin{proof}
    Suppose, for contradiction, that $\mathsf{E}_{\varepsilon_+}$ contains a loop component $\ell'$ such that $\ell' \simeq \ell$. Since $\ell \cap \mathsf{x} = \emptyset$, we have the intersection number $\mu(\ell, \mathsf{x}) = 0$, and by homotopy invariance, $\mu(\ell', \mathsf{x}) = 0$.

    Consider the local picture of $\ell'$ within a tubular neighborhood $\mathcal{T}$ of $\mathsf{x}$. By construction, $\ell'$ is composed of segments of $\mathsf{x}$ and $\mathsf{Y}$ reconnected via $+$ smoothings at each crossing $c \in \mathsf{x} \cap \mathsf{Y}$. In particular, at least one $\mathsf{Y}\text{-}\mathsf{x}\text{-}\mathsf{Y}$ segment of $\ell' \cap \mathcal{T}$ crosses the neighborhood $\mathcal{T}$ transversely (see Figure~\ref{Fig:local-pic} (left)). Although $\ell'$ geometrically intersects $\mathsf{x}$, the condition $\mu(\ell', \mathsf{x}) = 0$ implies that they are not in minimal position. Consequently, $\ell'$ and $\mathsf{x}$ must form at least one empty bigon. Specifically, the boundary of such a bigon consists of a segment along $\ell'$ and a segment along $\mathsf{x}$, both bounded by two intersection points in $\ell' \cap \mathsf{x}$. 
    
    However, the existence of such a bigon is impossible in $\mathsf{E}_{\varepsilon_+}$. For $\ell'$ and $\mathsf{x}$ to form a bigon, the segment of $\ell'$ must enter and exit the tubular neighborhood $\mathcal{T}$ through the same boundary component. Conversely, because every crossing in $\mathsf{E}_{\varepsilon_+}$ receives a $+$ smoothing, the resulting strands of $\ell'$ strictly traverse from one side of $\mathcal{T}$ to the other, as illustrated in Figure~\ref{Fig:local-pic} (left). This yields a contradiction.

     Consequently, since no empty bigons can be formed to eliminate these geometric intersections, $\ell'$ must intersect $\mathsf{x}$ essentially. This forces $\mu(\ell', \mathsf{x}) > 0$, which contradicts the assumption that $\mu(\ell', \mathsf{x}) = 0$. Therefore, $\mathsf{E}_{\varepsilon_+}$ cannot contain any loop component isotopic to $\ell$.
    
\end{proof}

  \begin{defn}\label{def:bead-chain}
      We introduce the following terminology:
      \begin{enumerate}
          \item A \textbf{chain} is an unknot which may contain strips.
          \item A \textbf{bead} is an unknot that is strip-free.

          \item Suppose a chain possesses exactly two angles whose signs differ from the signs of all other angles. We say the chain is:
              \begin{itemize}
  			\item \textbf{cyclic} if these two distinct angles are adjacent; 
  			\item \textbf{linear} if these two distinct angles are non-adjacent.
  		\end{itemize}
      \end{enumerate}
  \end{defn}

  \begin{rem}
      The arguments in the subsequent sections are developed mainly for cyclic and linear chains.
  \end{rem}

  Building upon the definitions and properties above, we characterize the \textbf{bead-chain configuration} as follows:
  
  For a weighted crossingless multicurve $\mathsf{E}_\varepsilon$, we have:
  \[
  [\mathsf{E}_\varepsilon] = (-q^2 - q^{-2})^{|\Unknots_\varepsilon|} \delta_\varepsilon [\widehat{\mathsf{E}}_\varepsilon],
  \]
  where, via the decomposition \eqref{eq:decompose-E}:
  \begin{itemize}
  	\item $\widehat{\mathsf{E}}_\varepsilon$ is the weighted simple multicurve $\Arcs_\varepsilon\cup \Loops_\varepsilon$ obtained by removing all contractible components from $\mathsf{E}_\varepsilon$.
  	\item $|\Unknots_\varepsilon|=| \{ \text{unknot components in} \mathsf{E}_\varepsilon \} |$
  	\item $\delta_\varepsilon$ is the indicator function:
    $$  \delta_\varepsilon := \delta_{|\Unarcs_\varepsilon|, 0} = 
  \begin{cases} 
      1 & \text{if } \mathsf{E}_\varepsilon \text{ contains no contractible arcs}, \\ 
      0 & \text{if } \mathsf{E}_\varepsilon \text{ contains a contractible arc}. 
  \end{cases}$$
  \end{itemize}
  Therefore, the superposition product expands to:
\begin{equation}\label{eq:detail-dec}
    [\mathsf{x}][\mathsf{Y}] =q^a \sum_{\substack{\varepsilon: \mathsf{x} \cap \mathsf{Y}  \to \{+, -\}}} q^{|\varepsilon^{-1}(+)| - |\varepsilon^{-1}(-)|} (-q^2 - q^{-2})^{|\Unknots_\varepsilon|} \delta_\varepsilon [\widehat{\mathsf{E}}_\varepsilon].
\end{equation}
  
  Let $\varepsilon_\pm$ denote the constant sign assignment mapping every crossing to $\pm$. For any $\mathsf{E}_\varepsilon$ containing no contractible arcs, Lemma~\ref{lem:Muller-C1} yields the inequality:
  \begin{align}\label{eq:q-deg-inequality}
  |\varepsilon^{-1}(+)| - |\varepsilon^{-1}(-)| + 2|\Unknots_\varepsilon| \leq |\varepsilon^{-1}(+)| + |\varepsilon^{-1}(-)| = |\varepsilon_+^{-1}(+)|.      
  \end{align}
  The right-hand side $|\varepsilon_+^{-1}(+)|$ represents the total number of crossings $|\mathsf{x} \cap \mathsf{Y}|$. 
  We say $\mathsf{E}_\varepsilon$ \textbf{contributes to the top $q$-degree term} if equality holds in \eqref{eq:q-deg-inequality}, which is equivalent to the condition:
  \begin{equation}\label{eq:q-deg-top}
|\Unknots_\varepsilon| = |\varepsilon^{-1}(-)|.
\end{equation}
  
 We refer to the set $\Unknots_\varepsilon$ of unknot components as the \textbf{bead-chain configuration} of $\mathsf{E}_\varepsilon$.  Let $\mathcal{E}_{\mathrm{top}}$ denote the set of sign assignments $\varepsilon$ for which $[\mathsf{E}_\varepsilon]$ contributes to the top $q$-degree term in \eqref{eq:detail-dec}. We define
 $$\Unknots_{\mathrm{top}} := \bigsqcup_{\varepsilon \in \mathcal{E}_{\mathrm{top}}} \Unknots_\varepsilon,$$ 
 and make the following observations:
  
  \begin{lem}\label{lem:top-chains}
      For any sign assignment $\varepsilon \in \mathcal{E}_{\mathrm{top}}$, the following properties hold:
  	\begin{enumerate}
  		\item The set $\Unknots_\varepsilon$ consists exclusively of cyclic and linear chains (as defined in \ref{def:bead-chain}).
  		
  		\item Any configuration $\Unknots_\varepsilon$ containing linear chains can be transformed into one consisting solely of cyclic chains through strip attachments.
  		
  		\item The property of contributing to the top $q$-degree term is preserved under strip operations. That is, attaching or removing a strip in $\Unknots_\varepsilon$ yields a configuration $\Unknots_{\varepsilon'}$ for some $\varepsilon' \in \mathcal{E}_{\mathrm{top}}$.
  	\end{enumerate}
  \end{lem}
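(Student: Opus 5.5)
The whole argument will run through the equality case of Lemma~\ref{lem:Muller-C1}. For $\varepsilon\in\mathcal{E}_{\mathrm{top}}$ we have $\delta_\varepsilon=1$, so there are no contractible arcs, and \eqref{eq:q-deg-top} gives $|\Unknots_\varepsilon|=|\varepsilon^{-1}(-)|$. This is exactly equality in Lemma~\ref{lem:Muller-C1}. Conversely, any $\varepsilon$ without contractible arcs that attains this equality lies in $\mathcal{E}_{\mathrm{top}}$. By the equality clause of Lemma~\ref{lem:Muller-C1}, the condition $\varepsilon\in\mathcal{E}_{\mathrm{top}}$ is therefore equivalent to two requirements: there are no contractible arcs, and both strands of every negative smoothing lie on unknots.

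\textbf{Counting for (1).} Every negative crossing contributes two negative angles, and each lies on an unknot. So the total number of negative angles on unknots is $2|\varepsilon^{-1}(-)|=2|\Unknots_\varepsilon|$. By Lemma~\ref{lem:unknot-neg-angles}, each unknot carries at least two negative angles, so each carries exactly two. The remaining angles on an unknot are positive, so every unknot has exactly two angles whose sign (negative) differs from all the others. Such a chain is cyclic if the two negative angles share a crossing and linear otherwise. I would also check the degenerate case: if every angle on the unknot is negative, then it has exactly two angles, which contradicts Proposition~\ref{prop:unknot-attatchmentpoints} (at least four attachment points) under minimal position. This establishes (1).

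\textbf{Strip operations for (3).} Switching the smoothing at a strip crossing $c$ changes $|\varepsilon^{-1}(-)|$ by $\pm1$ and $|\Unknots_\varepsilon|$ by $\pm1$ in the same direction, and all other components are untouched. The key point to verify is the direction: a strip on an unknot consists of two outward angles at $c$. The plan is to show, using the Type A leaf analysis (negative angles of a chain are the inward ones at Type A leaves), that these two outward angles are positive. Removing the strip then turns a $+$ into a $-$ and splits one unknot into two, so equality \eqref{eq:q-deg-top} is preserved. Attaching a strip is the reverse move and merges two unknots, turning a $-$ into a $+$. Both moves create no contractible arc, so $\varepsilon'\in\mathcal{E}_{\mathrm{top}}$. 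I must also confirm that the new negative crossing after removal has both strands on unknots; this holds because its strands are exactly $\mathsf{e}_1$ and $\mathsf{e}_2$.

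\textbf{Main obstacle: (2).} For a linear chain, the two negative angles come from two different negative crossings $c_1$ and $c_2$. Each partner angle at $c_i$ lies on another unknot, again by the equality clause. I would first try to show that the partner at $c_1$ lies on a chain $\mathsf{e}'$ which is disjoint from $\mathsf{e}$ across $c_1$ and sits in the configuration of a strip attachment. This would mean switching $c_1$ to $+$ merges $\mathsf{e}$ and $\mathsf{e}'$, and by (3) the result stays in $\mathcal{E}_{\mathrm{top}}$. Each such merge strictly decreases $|\varepsilon^{-1}(-)|$, so the process terminates. At termination every chain has its two negative angles at a single crossing, which means all chains are cyclic. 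The delicate part is geometric: I need to check that $c_1$ has the strip shape, with the disks bounded by $\mathsf{e}$ and $\mathsf{e}'$ lying on opposite sides, rather than nested. I would argue this via the tree $\Gamma$ and the $(+,-)$ clockwise order at Type A leaves inside the tubular neighborhood $\mathcal{T}$.
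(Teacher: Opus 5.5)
Your proposal is correct and follows essentially the same route as the paper. In (1) you squeeze the count of negative angles with Lemma~\ref{lem:unknot-neg-angles} to get exactly two per unknot. In (2) you attach strips at the negative crossings of linear chains, so $|\varepsilon^{-1}(-)|$ drops at each step. In (3) you match the $\pm1$ changes in $|\Unknots_\varepsilon|$ and $|\varepsilon^{-1}(-)|$.

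The sign and geometry checks you flag are ones the paper passes over, and your planned observation closes them. Each unknot's two negative angles are the inward ones at its (at least two) Type A leaves, so all outward angles are positive and strips sit at $+$ crossings. At a negative crossing $c_1$ of a linear chain, both angles are inward and lie on distinct unknots, which rules out nesting and gives exactly the strip-attaching picture.
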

  
  \begin{proof}
  	For each unknot $\mathsf{n} \in \Unknots_{\varepsilon}$, let $k_{\mathsf{n}}$ denote the number of its negative attachment points.  By Lemma \ref{lem:unknot-neg-angles}, each unknot possesses at least two such points, i.e., $k_{\mathsf{n}} \geq 2$. Since every negative attachment point in $\mathsf{E}_\varepsilon$ belongs to exactly one unknot component, we have the following inequality:   
    $$2|\varepsilon^{-1}(-)| \geq \sum_{\mathsf{n} \in \Unknots_{\varepsilon}} k_{\mathsf{n}} \geq 2|\Unknots_\varepsilon|.$$
    
The condition for contributing to the top $q$-degree term, $|\Unknots_\varepsilon| = |\varepsilon^{-1}(-)|$, forces equality throughout. This implies
\begin{align}\label{eq:unknot-2-neg-pts}
k_{\mathsf{n}} = 2 \quad \text{for every } \mathsf{n} \in \Unknots_{\varepsilon}.    
\end{align}
According to Definition \ref{def:bead-chain}, an unknot with exactly two negative attachment points is either a cyclic or a linear chain, which proves (1).

For (2), observe that attaching a strip between two linear chains consumes exactly one negative attachment point from each, reducing the total count of negative attachment points by two. Simultaneously, this operation merges two components into one, reducing $|\Unknots_\varepsilon|$ by one. Since the equality $k_{\mathsf{n}} = 2$ is maintained for the resulting chain, we can iterate this process to transform all linear chains into cyclic chains as long as linear chains remain. This yields (2).

Regarding (3), any strip operation (attachment or removal) changes the number of unknot components $|\Unknots_\varepsilon|$ by exactly one. At the same time, the number of $-$ smoothings $|\varepsilon^{-1}(-)|$ also changes by exactly one, as a strip operation corresponds to a switching the choice of smoothing at a crossing. Thus, if the equality $|\Unknots_\varepsilon| = |\varepsilon^{-1}(-)|$ holds for $\varepsilon$, it is preserved for the modified assignment $\varepsilon'$. Consequently, $\varepsilon'$ remains in $\mathcal{E}_{\mathrm{top}}$, proving (3).
  \end{proof}

For any curve component $\gamma \subset \mathsf{E}_{\varepsilon}$, let $\mathcal{C}_{\gamma} \subset \mathsf{x} \cap \mathsf{Y}$ denote the set of crossings whose smoothing provides the attachment points on $\gamma$. For a sign function $\varepsilon \in \mathcal{E}_{\mathrm{top}}$ and a chain $\mathsf{n}\in \Unknots_\varepsilon$, we capture its combinatorial skeleton via the generating set $\mathcal{G}_\mathsf{n} \subseteq \mathcal{C}_\mathsf{n}$:
$$ \mathcal{G}_\mathsf{n} := \mathrm{Str}_\mathsf{n} \sqcup \mathcal{C}^-_\mathsf{n}. $$
Here, $\mathrm{Str}_\mathsf{n}$ denotes the subset of crossings where $\mathsf{n}$ admits a strip (see Figure~\ref{fig:strip-strip ops}). The subset $\mathcal{C}^-_\mathsf{n}:=\varepsilon^{-1}(-)\cap \mathcal{C}_\mathsf{n}$ consists of exactly two negative attachment points of $\mathsf{n}$ (see \eqref{eq:unknot-2-neg-pts}), which consists of a single crossing if $\mathsf{n}$ is cyclic, and two crossings if $\mathsf{n}$ is linear. Globally, we define:
\begin{align}\label{eq:generating_set}
    \mathrm{Str}_\varepsilon &:= \bigcup_{\mathsf{n}\in \Unknots_\varepsilon} \mathrm{Str}_\mathsf{n}, \quad \text{and} \quad \mathcal{G}_\varepsilon := \bigcup_{\mathsf{n}\in \Unknots_\varepsilon} \mathcal{G}_\mathsf{n} = \mathrm{Str}_\varepsilon \cup \varepsilon^{-1}(-). 
\end{align}
The set $\mathcal{G}_\varepsilon$ is invariant under strip operations. Removing all strips from $\Unknots_\varepsilon$ yields a pure bead configuration $\Unknots_{\varepsilon'}$ (Lemma~\ref{lem:top-chains}(2)), whose induced sign function $\varepsilon'$ satisfies: $$ (\varepsilon')^{-1}(-) = \mathcal{G}_{\varepsilon}. $$

On the other hand, for $\varepsilon_+$, let $\mathcal{L}_{\varepsilon_+}$ be a maximal collection of pairwise non-homotopic simple loops in $\mathsf{E}_{\varepsilon_+}$. For each $\ell \in \mathcal{L}_{\varepsilon_+}$, let $\mathcal{H}_\ell$ denote the set of simple loops in $\mathsf{E}_{\varepsilon_+}$ freely homotopic to $\ell$. We define its associated crossing set as:
$$ \mathcal{C}({\mathbb{A}_\ell}) := \bigcup_{\gamma \in \mathcal{H}_\ell} \mathcal{C}_{\gamma}. $$
The transitions between bead-chain configurations via strip operations induce a regular neighborhood system governed by these homotopy classes, which provides the topological foundation for localizing the proof of Theorem~\ref{thm:main}:

  \begin{lem}[Regular neighborhood system]\label{lem:regular-neighborhoods}
  	 There exists a collection of annular neighborhoods $\{\mathbb{A}_\ell\}_{\ell \in \mathcal{L}_{\varepsilon_+}}$ satisfying the following:
  	\begin{enumerate}
  		\item For each $\ell \in \mathcal{L}_{\varepsilon_+}$, the neighborhood $\mathbb{A}_\ell$ contains all components of $\mathsf{E}_{\varepsilon_+}$ homotopic to $\ell$, as well as the disk neighborhoods $\mathcal{D}_c$ for all $c \in \mathcal{C}({\mathbb{A}_\ell})$.
  		
  		\item For distinct loops $\ell \neq \ell'$, the intersection of their neighborhoods is given by the finite union:
  		\[
    \mathbb{A}_\ell \cap \mathbb{A}_{\ell'} = \bigcup_{c \in \mathcal{C}({\mathbb{A}_\ell}) \cap \mathcal{C}({\mathbb{A}_\ell})} \mathcal{D}_c.
    \]
  	\end{enumerate}
  \end{lem}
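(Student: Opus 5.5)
The annuli $\mathbb{A}_\ell$ will be built as thickenings of explicit 2-complexes. Fix $\ell\in\mathcal{L}_{\varepsilon_+}$ and let $\mathcal{H}_\ell=\{\gamma_1,\dots,\gamma_w\}$ be the loops of $\mathsf{E}_{\varepsilon_+}$ freely homotopic to $\ell$. These are pairwise disjoint simple loops, since $\mathsf{E}_{\varepsilon_+}$ is crossingless, and none is null-homotopic. In a surface, pairwise disjoint, freely homotopic, essential simple loops cobound embedded annuli. After reordering, consecutive loops $\gamma_i,\gamma_{i+1}$ cobound an annulus $R_i$, and we may choose $R_i$ to contain no other $\gamma_j$. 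We define $\mathbb{A}_\ell^0:=\bigcup_i R_i$, which is the degenerate annulus $\gamma_1$ when $w=1$. Then $\mathbb{A}_\ell$ is a small regular neighborhood of
\[
\mathbb{A}_\ell^0\cup\bigcup_{c\in\mathcal{C}(\mathbb{A}_\ell)}\mathcal{D}_c .
\]

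To keep $\mathbb{A}_\ell$ an annulus, each disk $\mathcal{D}_c$ must sit inside a collar of $\mathbb{A}_\ell^0$. This is where the $+$ smoothing matters. The attachment points $+c',+c''$ on a loop $\gamma\in\mathcal{H}_\ell$ lie on $\gamma$ itself, so $\mathcal{D}_c$ meets $\gamma$. After shrinking $\mathcal{D}_c$, each $\mathcal{D}_c$ with $c\in\mathcal{C}_\gamma$ is a small disk centred at a point of $\gamma$'s smoothing arc. Adding such disks to a thickening of $\mathbb{A}_\ell^0$ changes the topology only if a disk reaches another region of the surface. We shrink the $\mathcal{D}_c$ until each lies inside a collar neighborhood of $\mathbb{A}_\ell^0$. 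Then $\mathbb{A}_\ell$ deformation retracts onto $\mathbb{A}_\ell^0$ and is an annulus. This proves item (1).

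For item (2), take $\ell\neq\ell'$ and consider the cores $\mathbb{A}^0_\ell$ and $\mathbb{A}^0_{\ell'}$. The boundary loops of each core belong to different free homotopy classes, and all these loops are disjoint. So the two annuli cobounded by consecutive loops are either disjoint or nested. Nesting would require a loop of one class to sit inside an annulus bounded by loops of the other class. Every essential simple loop inside an annulus is homotopic to its core, so nesting is impossible. Hence $\mathbb{A}^0_\ell\cap\mathbb{A}^0_{\ell'}=\emptyset$. Taking the collars thin enough, thinner than half the distance between the disjoint compact cores, makes the collars disjoint too. The only remaining overlap comes from disks $\mathcal{D}_c$ included in both neighborhoods, that is, $c\in\mathcal{C}(\mathbb{A}_\ell)\cap\mathcal{C}(\mathbb{A}_{\ell'})$. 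This happens when the two strands produced by the $+$ smoothing at $c$ belong to loops of the two different classes.

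The key technical step, which I expect to be the main obstacle, is arranging that such a shared disk $\mathcal{D}_c$ does not create additional intersection between the collars outside $\mathcal{D}_c$. It must also not change the annulus topology, for example by joining the two boundary components of $\mathbb{A}_\ell$ through $\mathcal{D}_c$. I would handle this with the local picture in Figure~\ref{Fig:local-pic} (left). At a $+$ smoothing, the two resulting strands are separated by the two positive angles, so inside $\mathcal{D}_c$ each strand has a half-disk side facing away from the other. We then define $\mathbb{A}_\ell$ to use the full $\mathcal{D}_c$, but its collar elsewhere to use only the part of $\mathcal{D}_c$ adjacent to the $\gamma$-strand. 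The same is done for $\ell'$. Since $\mathcal{D}_c$ is a disk meeting $\mathbb{A}_\ell^0$ along a single arc of $\gamma$, attaching it along that arc is a boundary-connected sum with a disk. This preserves the annulus, so item (1) still holds. The intersection of the two neighborhoods is then exactly the union of the shared disks, which gives item (2).

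I would also invoke Lemma~\ref{lem:leading-term-no-new-loop} to rule out a degenerate situation. That lemma excludes loops of $\mathsf{E}_{\varepsilon_+}$ that are disjoint from $\mathsf{x}\cup\mathsf{Y}$ and homotopic to nothing in $\mathsf{x}\cup\mathsf{Y}$. This ensures each class $\ell$ is genuinely built from segments of $\mathsf{x}$ and $\mathsf{Y}$, so $\mathcal{C}(\mathbb{A}_\ell)$ is the correct crossing set to attach.
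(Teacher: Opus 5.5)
Your construction matches the paper's: a union of the annuli cobounded by the parallel loops of $\mathcal{H}_\ell$ plus thin collars, with the disks $\mathcal{D}_c$, $c\in\mathcal{C}(\mathbb{A}_\ell)$, adjoined. Your argument that cores of different classes cannot nest is more careful than the paper's one-line justification.

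\textbf{The gap is in adjoining the disks.} $\mathcal{D}_c$ is the smoothing disk, so it contains both $+$-strands at $c$ together with the middle region between them. It cannot be shrunk into a collar of one strand. Your key claim is that $\mathcal{D}_c$ meets $\mathbb{A}^0_\ell$ along a single arc of $\gamma$. This holds only when exactly one of the two strands lies on a loop of class $\ell$.

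When both strands do, $\mathcal{D}_c$ meets $\mathbb{A}^0_\ell$ in two arcs. This is exactly the case of the generating crossings $\mathcal{G}(\mathbb{A}_\ell)$ used later, e.g.\ $t_{\mathsf{n}}$ in Lemma~\ref{lem:null-loop-cyclic}. Adjoining such a $\mathcal{D}_c$ attaches the middle region as a $1$-handle, which drops the Euler characteristic to $-1$, unless that region already lies in some $R_i$.

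What you would need is the following. If both strands at $c$ lie on loops of $\mathcal{H}_\ell$, then they lie on consecutive loops $\gamma_i,\gamma_{i+1}$ and the middle region lies in $R_i$. In that case $\mathcal{D}_c\setminus\operatorname{int}R_i$ is two caps, each glued along one arc, and the annulus survives. Neither your argument nor the paper's establishes this; the paper uses the same construction and simply asserts the conclusion.

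This is in fact false in general, so the statement itself needs a restriction. Here is a counterexample.
\begin{enumerate}
\item Let $\mathsf{x}$ meet $\mathsf{Y}=Y_1\cup Y_2$ at $c_1,c_2,c_3$, in this order along $\mathsf{x}$. Here $Y_1$ meets $\mathsf{x}$ only at $c_2$, and $Y_2$ meets it at $c_1$ and $c_3$, crossing in the same direction both times.
\item Realize this on a thickening of the graph $\mathsf{x}\cup\mathsf{Y}$, with marked points added on the boundary. Minimal position is automatic there: a bigon would make a reduced cycle of the graph null-homotopic.
\item The $+$ smoothing always joins the incoming $\mathsf{x}$-segment to the $\mathsf{Y}$-branch on one fixed side of $\mathsf{x}$. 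Hence $\mathsf{E}_{\varepsilon_+}$ consists of one arc and one loop $\gamma$. The loop $\gamma$ runs along $\mathsf{x}$ from $c_1$ to $c_2$, around $Y_1$, along $\mathsf{x}$ from $c_2$ to $c_3$, and back to $c_1$ along $Y_2$.
\item Both strands at $c_2$ lie on $\gamma$. So $\gamma\cup\mathcal{D}_{c_2}$ carries based loops representing $[\gamma]$ and $[Y_1]$, i.e.\ $ac$ and $a$ for free generators $a,c$ of $\pi_1$. These do not commute.
\item The image of $\pi_1$ of an annulus is cyclic, so no annulus contains $\gamma\cup\mathcal{D}_{c_2}$.
\end{enumerate}

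A repaired statement should only adjoin those $\mathcal{D}_c$ whose middle region lies in some $R_i$, as for the crossings $t_{\mathsf{n}}$ of Lemma~\ref{lem:null-loop-cyclic}. With that restriction, your proof goes through using the two-cap observation above.
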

  
  \begin{proof}
 Since $\mathcal{H}_\ell$ consists of pairwise disjoint, homotopic simple loops on an oriented surface, any two loops $\ell_i, \ell_j \in \mathcal{H}_\ell$ bound an embedded annulus $\mathbb{A}'_{ij}$. The annular neighborhood $\mathbb{A}'_\ell$ is defined as the union of all such annuli $\mathbb{A}'_{ij}$ together with sufficiently thin neighborhoods of each $\ell_i$. Since the components of $\mathsf{E}_{\varepsilon_+}$ are pairwise disjoint, $\{\mathbb{A}'_\ell\}_{\ell \in \mathcal{L}_{\varepsilon_+}}$ are pairwise disjoint, and $\mathbb{A}'_\ell \cap \gamma = \emptyset$ for any component $\gamma \subset \mathsf{E}_{\varepsilon_+}$ not homotopic to $\ell$.

For each crossing $c$ whose smoothing provides attachment points $\varepsilon(c)c^\bullet$ on some $\gamma \in \mathcal{H}_\ell$, let $\mathcal{D}_c$ be a small disk neighborhood of $c$ containing $\varepsilon(c)c^\bullet$. These disks are taken to be sufficiently small such that $\mathcal{D}_c \cap \mathcal{D}_{c'} = \emptyset$ for $c \neq c'$. The augmented annular neighborhood is then given by:
\[
\mathbb{A}_\ell := \mathbb{A}'_\ell \cup \left( \bigcup_{\gamma \in \mathcal{H}_\ell} \bigcup_{c \in \gamma} \mathcal{D}_c \right).
\]
This construction ensures that all smoothing sites and their associated attachment points are localized within the neighborhood system, satisfying the required conditions.
  \end{proof}

Via an ambient isotopy, the superposition $\mathsf{x} \cdot \mathsf{Y}$ is assumed to be in a position where all crossings are contained within $\bigcup \mathcal{D}_c \subset \bigcup \mathbb{A}_\ell$, and the complement $(\mathsf{x} \cdot \mathsf{Y}) \setminus \bigcup \mathbb{A}_\ell$ contains no segments that can be homotoped into $\bigcup \mathbb{A}_\ell$ relative to their endpoints. Such a configuration is obtained by translating any crossing outside the neighborhoods along $\mathsf{x}$ into the nearest $\mathbb{A}_\ell$, and retracting all such redundant segments.
  
  \begin{lem}\label{lem:null-loop-cyclic}
  	 For any cyclic chain $\mathsf{n}$ in $\mathsf{E}_\varepsilon$ with $\varepsilon \in \mathcal{E}_{\mathrm{top}}$, let $t_{\mathsf{n}}$ denote the unique crossing in $\mathcal{C}^-_\mathsf{n}$. Changing the smoothing at $t_{\mathsf{n}}$ to be positive yields two freely homotopic simple loops $\ell, \ell' \subset \mathsf{E}_{\varepsilon_+}$ such that $\mathsf{n}$ is contained within the annulus bounded by $\ell$ and $\ell'$. Furthermore, every chain $\mathsf{n} \in \Unknots_{\mathrm{top}}$ is contained within some annular neighborhood $\mathbb{A}_\ell$.
  \end{lem}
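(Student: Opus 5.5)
The plan is to first reduce to beads, then do a local analysis at the single negative crossing, and finally pass from beads back to chains. By Lemma~\ref{lem:top-chains}(3), removing strips keeps us inside $\mathcal{E}_{\mathrm{top}}$ and leaves $\mathcal{G}_\varepsilon$ unchanged. So I will first treat the case where every crossing of $\mathsf{n}$ other than $t_{\mathsf{n}}$ receives a $+$ smoothing. Recall from \eqref{eq:unknot-2-neg-pts} that a cyclic chain has exactly two negative attachment points, and both come from the single crossing $t_{\mathsf{n}}$. Let $D$ be the disk bounded by $\mathsf{n}$. The proof of Lemma~\ref{lem:unknot-neg-angles} shows that the tree $\Gamma$ attached to $D$ has at least two leaves, all of Type A, and that each leaf carries exactly one negative attachment point. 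Hence $\Gamma$ has exactly two leaves, so it is a path. The two leaves are the two small regions of $\mathcal{T}$ adjacent to the two negative strands at $t_{\mathsf{n}}$, sitting on opposite sides of $\mathsf{x}$ inside $\mathcal{D}_{t_{\mathsf{n}}}$. Both angles at $t_{\mathsf{n}}$ point into $D$, by the Type A description.

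Next I switch the smoothing at $t_{\mathsf{n}}$ to $+$. Locally this cuts $D$ open along the two negative arcs inside $\mathcal{D}_{t_{\mathsf{n}}}$ and reglues them the other way. Because $\Gamma$ is a path whose two ends both lie at $t_{\mathsf{n}}$, the region $D$ becomes a strip that closes up on itself. The boundary of the new region therefore consists of two disjoint closed curves $\ell,\ell'$ made of the former boundary of $\mathsf{n}$. The region between them is $D$ minus a small square at $t_{\mathsf{n}}$, reglued, which is an annulus that contains the old $\mathsf{n}$ up to the local modification in $\mathcal{D}_{t_{\mathsf{n}}}$. The curves $\ell$ and $\ell'$ cobound this annulus, so they are freely homotopic. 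They are simple because all smoothings are now $+$.

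The main obstacle is to show that $\ell$ and $\ell'$ are non-contractible, so that they are genuine loop components of $\mathsf{E}_{\varepsilon_+}$ and not unknots. They are certainly not contractible arcs, since they are closed. Suppose, for contradiction, that they are unknots. Then they are unknot components of $\mathsf{E}_{\varepsilon_+}$, which has no negative smoothings. By Lemma~\ref{lem:unknot-neg-angles} every unknot needs two negative attachment points, which is impossible here. Equivalently, Lemma~\ref{lem:Muller-C1} gives $|\Unknots_{\varepsilon_+}|=0$. So $\ell$ and $\ell'$ are essential and homotopic to each other.

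It remains to check that switching the other crossings of $\mathsf{n}$ to $+$ does not affect $\ell$ and $\ell'$. In the bead case they are already $+$. In the general chain case I will remove strips first. Doing so splits $\mathsf{n}$ into beads, of which exactly one is cyclic and contains $t_{\mathsf{n}}$, while the strips become $+$ crossings. Reattaching those strips at $+$ is precisely the all-$+$ resolution, so the same annulus contains all the pieces. Finally, $\ell$ and $\ell'$ lie in a common class $\mathcal{H}_\ell$, so by Lemma~\ref{lem:regular-neighborhoods} the annulus $\mathbb{A}'_{ij}$ they bound, together with $\mathcal{D}_{t_{\mathsf{n}}}$, is contained in $\mathbb{A}_\ell$. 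This places $\mathsf{n}$ inside $\mathbb{A}_\ell$. For a linear chain $\mathsf{n}\in\Unknots_{\mathrm{top}}$, Lemma~\ref{lem:top-chains}(2) lets us attach strips until $\mathsf{n}$ lies in a cyclic chain within $\mathcal{E}_{\mathrm{top}}$, and the disks of those chains are nested. Hence every chain in $\Unknots_{\mathrm{top}}$ lies in some $\mathbb{A}_\ell$.
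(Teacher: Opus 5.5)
Your core argument is the paper's: switch $t_{\mathsf{n}}$ to $+$, obtain an annulus bounded by two homotopic loops, then reach arbitrary chains by strip operations. The details you add are sound and make explicit what the paper leaves to Figures~\ref{fig:cyclic-positive} and~\ref{fig:cyclic-strip}. The two Type A leaves carry $-t_{\mathsf{n}}'$ and $-t_{\mathsf{n}}''$, so both angles at $t_{\mathsf{n}}$ are inward. The switch therefore glues a band onto $D$, which gives an annulus because $\Sigma$ is oriented. Finally, $\ell,\ell'$ cannot be unknots because they carry no negative attachment points (Lemma~\ref{lem:unknot-neg-angles}).

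However, your handling of strips rests on a wrong picture, and the sentences built on it are false. By \eqref{eq:unknot-2-neg-pts}, the only negative attachment points of a top cyclic chain are the two at $t_{\mathsf{n}}$, and these are inward. So every strip of $\mathsf{n}$ sits at a $+$ crossing, and removing it switches that crossing to $-$, not to $+$. Each of the two resulting unknots gets one negative point from the strip crossing. By Lemma~\ref{lem:unknot-neg-angles}, each must also get exactly one of $-t_{\mathsf{n}}'$, $-t_{\mathsf{n}}''$.

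Consequently, stripping a cyclic chain produces only \emph{linear} beads, as happens in Example~\ref{ex: bead-chain}. It never produces a cyclic bead containing $t_{\mathsf{n}}$. Likewise, reattaching the strips just restores $\mathsf{n}$; it is not the all-$+$ resolution.

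The repair is to delete the reduction. Your second and third paragraphs only use that the negative points of $\mathsf{n}$ are the two at $t_{\mathsf{n}}$. That holds for every top cyclic chain, strips or not, so every crossing of $\mathcal{C}_{\mathsf{n}}$ other than $t_{\mathsf{n}}$ is already $+$. The check you actually need before passing to $\mathsf{E}_{\varepsilon_+}$ concerns the crossings \emph{outside} $\mathcal{C}_{\mathsf{n}}$. Switching them cannot affect $\ell\cup\ell'$, since these curves only pass through $\mathcal{D}_c$ for $c\in\mathcal{C}_{\mathsf{n}}$.

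Your last step is fine: linear chains lie inside cyclic ones obtained by strip attaching, with nested disks. This is the paper's argument run in reverse.
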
 

  \begin{proof}
       By changing the smoothing at $t_{\mathsf{n}}$ to be positive, the cyclic chain $\mathsf{n}$ resolves into two freely homotopic simple loops $\ell, \ell'$ that bound an annular neighborhood containing $\mathsf{n}$. See Figures~\ref{fig:cyclic-positive}, \ref{fig:cyclic-strip}. 

         \begin{figure}[H]
	\centering
	\begin{tikzpicture}[scale=1.4]
		\begin{scope}
			\begin{scope}[xshift=-1.2in,scale=.2]
				\draw[fill=gray!10,dashed] (-8,-4)  to (8,-4) arc (-90:90:4) to (-8,4) arc (90:270:4);
				
				\fill[pattern=north east lines, pattern color=black!40] (-8,-4) to [out=90, in=180] (-4,0) to [out=0, in=90] (0,-4);
				
				\fill [pattern=north east lines, pattern color=black!40] (0, 4) to [out=270, in=180] (4,0) to [out=0,in=270] (8,4);
				
				\draw [thick, dashed] (-12,0) to [out=0, in=270] (-8,4);
				
				\draw [thick] (-8,-4) to [out=90, in=180] (-4,0) to [out=0, in=90] node[marked, pos=.5] {} node[right=.1] {$-t_\mathsf{n}'$} (0,-4) ;
				
				\draw [thick] (0, 4) to [out=270, in=180] node[marked, pos=.5] {} node[left=.1]{$-t_\mathsf{n}''$} (4,0) to [out=0,in=270] (8,4) ;
				
				\draw [thick, dashed] (8,-4) to [out=90,in=180] (12,0) ;
			\end{scope}
			\node (=) at (0,0) {$\mapsto$};
			\begin{scope}[xshift=1.2in,scale=.2]
				\draw[fill=gray!10,dashed] (-8,-4)  to (8,-4) arc (-90:90:4) to (-8,4) arc (90:270:4);
				
				\fill [pattern=north east lines, pattern color=black!40] (-8,-4) to [out=90,in=180] (-4,0) to [out=0,in=270] (0,4) to (8, 4) to [out=270, in =0] (4,0) to [out=180, in=90] (0,-4);
				
				\draw [thick, dashed] (-12,0) to [out=0, in=270] (-8,4);
				
				\draw[thick] (-8,-4) to [out=90,in=180] node[left=.1,pos=.5] {$\ell$} (-4,0) to [out=0,in=270] node[marked, pos=.5] {} node[left=.1]{$+t_\mathsf{n}''$} (0,4);
				
				\draw[thick] (0,-4) to [out=90,in=180] node[marked, pos=.5] {} node[right=.1]{$+t_\mathsf{n}'$} (4,0) to [out=0,in=270] node[right=.1,pos=.5] {$\ell'$} (8,4);
				
				\draw [thick, dashed] (8,-4) to [out=90,in=180] (12,0) ;
			\end{scope}
		\end{scope}
	\end{tikzpicture}
	\caption{Changing the smoothing at $t_\mathsf{n}$ to positive}
	\label{fig:cyclic-positive}
  \end{figure}

  \begin{figure}[H]
	\centering
	\begin{tikzpicture}[scale=1.4]
		\begin{scope}[scale=.2]
			\draw[fill=gray!10,dashed] (-8,-4)  to (8,-4) arc (-90:90:4) to (-8,4) arc (90:270:4);
			
			\fill [pattern=north east lines, pattern color=black!40] (-8,-4) to [out=90,in=180] (-4,0) to [out=0,in=270] (0,4) to (8, 4) to [out=270, in =0] (4,0) to [out=180, in=90] (0,-4);
			
			\draw [thick, dashed] (-12,0) to [out=0, in=270] (-8,4);
			
			\draw[thick] (-8,-4) to [out=90,in=180] node[marked, pos=.5] {} node[left=.1] {$\mathsf{+}$} (-4,0) node[above left=.1] {$\ell$} to [out=0,in=270] node[marked, pos=.5] {} node[left=.1]{$+$} (0,4);
			
			\draw[thick] (0,-4) to [out=90,in=180] node[marked, pos=.5] {} node[right=.1]{$+$} (4,0) node[below right=.1] {$\ell'$} to [out=0,in=270] node[marked, pos=.5] {} node[right=.1] {$\mathsf{+}$} (8,4);
			
			\draw [thick, dashed] (8,-4) to [out=90,in=180] (12,0) ;
		\end{scope}
	\end{tikzpicture}
	\caption{Components of $D \cap \mathcal{T}$ associated with the cyclic chain $\mathsf{n}$, where $D$ is the disk bounded by $\mathsf{n}$ and $\mathcal{T}$ is the tubular neighborhood of $\mathsf{x}$.}
	\label{fig:cyclic-strip}
  \end{figure}

  Any chain $\mathsf{n} \in \Unknots_{\mathrm{top}}$ is obtained from some cyclic chain $\mathsf{n}_0 \in \Unknots_{\mathrm{top}}$ via a finite sequence of strip removing operations. By Lemma \ref{lem:top-chains}(2), these operations preserve containment within the annular neighborhood $\mathbb{A}_{\ell}$ associated with $\mathsf{n}_0$.
  \end{proof}

Let $\mathcal{G}(\mathbb{A}_{\ell})$ denote the set of crossings where both strands resulting from the $+$ smoothing belong to loops homotopic to $\ell$. We refer to $\mathcal{G}(\mathbb{A}_{\ell})$ as the {generating set in $\mathbb{A}_{\ell}$}, or simply the {generating set of type $\ell$}, denoted by $\mathcal{G}(\ell)$. By definition, these sets are mutually disjoint: $\mathcal{G}(\mathbb{A}_{\ell}) \cap \mathcal{G}(\mathbb{A}_{\ell'}) = \emptyset$ whenever $\ell$ and $\ell'$ are non-homotopic. 

For any $\varepsilon \in\mathcal{E}_{\mathrm{top}}$, attaching strips transforms the configuration $\Unknots_\varepsilon$ into a new configuration $\Unknots_{\teps}$ consisting entirely of cyclic chains (see Lemma~\ref{lem:top-chains}(2)). Recall that the global generating set is invariant under strip operations, yielding $\mathcal{G}_\varepsilon = \mathcal{G}_{\teps}$. As established previously, we naturally have $\varepsilon^{-1}(-) \subseteq \mathcal{G}_\varepsilon$. Moreover, Lemma~\ref{lem:null-loop-cyclic} guarantees that $\mathcal{G}_{\teps} \subseteq \bigcup_{\ell\in \mathcal{L}_{\varepsilon_+}} \mathcal{G}(\mathbb{A}_{\ell})$. We thus have the following observations:

\begin{prop}[Independent generation]\label{prop:indep-config}
For each \(\ell\in \mathcal{L}_{\varepsilon^{+}}\), choose a subset
$\mathcal{Z}_{\ell}\subseteq \mathcal{G}(\mathbb{A}_{\ell})$ and define $\varepsilon$, $\varepsilon_\ell$ by their negative loci:
\[
\varepsilon^{-1}(-)
=
\bigsqcup_{\ell\in \mathcal{L}_{\varepsilon_{+}}}
\mathcal{Z}_{\ell},\quad (\varepsilon_\ell)^{-1}(-)=\mathcal{Z}_\ell.
\]

(1) $\varepsilon \in \mathcal{E}_{\mathrm{top}}$ if and only if the corresponding multicurve
\(\mathsf{E}_{\varepsilon}\) contains exactly \(|\varepsilon^{-1}(-)|\) unknot components with each unknot possessing exactly two negative attachment points. Moreover, all \(\varepsilon\in\mathcal{E}_{\mathrm{top}}\) can be constructed in this way.

(2) We have $\varepsilon \in\mathcal{E}_{\mathrm{top}}$ if and only if $\varepsilon_\ell \in\mathcal{E}_{\mathrm{top}}$ for all $\ell \in \mathcal{L}_{\varepsilon_+}$. 
\end{prop}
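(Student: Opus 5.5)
For part (1), I would argue from the equality analysis in the proof of Lemma~\ref{lem:top-chains}. The key step is the counting inequality
\[
2|\varepsilon^{-1}(-)| \ge \sum_{\mathsf{n}\in\Unknots_\varepsilon} k_{\mathsf{n}} \ge 2|\Unknots_\varepsilon|,
\]
where the first inequality holds because each negative crossing contributes two negative attachment points, and each of these lies on at most one unknot.

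For the forward direction, suppose $\varepsilon\in\mathcal{E}_{\mathrm{top}}$. Then \eqref{eq:q-deg-top} forces equality throughout. Hence there are exactly $|\varepsilon^{-1}(-)|$ unknots, each with exactly two negative attachment points; this is \eqref{eq:unknot-2-neg-pts}.

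For the converse, suppose there are $|\varepsilon^{-1}(-)|$ unknots. Then \eqref{eq:q-deg-top} holds directly. The remaining point is the absence of contractible arcs. Every negative attachment point already lies on an unknot, because the unknots account for all $2|\varepsilon^{-1}(-)|$ of them. A contractible arc would then carry only positive attachment points. But any monogon has a Type A leaf, and a Type A leaf carries a negative angle. So there is no contractible arc, and $\delta_\varepsilon=1$.

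For the claim that every $\varepsilon\in\mathcal{E}_{\mathrm{top}}$ arises this way, I would use the discussion preceding the proposition. Attach strips to pass to $\teps$, which consists only of cyclic chains (Lemma~\ref{lem:top-chains}(2)). Then
\[
\varepsilon^{-1}(-)\subseteq\mathcal{G}_\varepsilon=\mathcal{G}_{\teps}\subseteq\bigcup_\ell\mathcal{G}(\mathbb{A}_\ell),
\]
using Lemma~\ref{lem:null-loop-cyclic}. Since the sets $\mathcal{G}(\mathbb{A}_\ell)$ are pairwise disjoint, setting $\mathcal{Z}_\ell:=\varepsilon^{-1}(-)\cap\mathcal{G}(\mathbb{A}_\ell)$ exhibits $\varepsilon$ in the required form.

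For part (2), the plan is to localize with Lemma~\ref{lem:regular-neighborhoods}. Switching crossings of $\mathcal{Z}_\ell$ from $+$ to $-$ only reconnects strands inside $\mathbb{A}_\ell$. This is because each $c\in\mathcal{G}(\mathbb{A}_\ell)$ has both $+$-strands on loops of type $\ell$, so $\mathcal{D}_c\subset\mathbb{A}_\ell$, and by construction $\mathcal{D}_c$ meets no other $\mathbb{A}_{\ell'}$ in a way that involves strands of other types. Consequently the unknots of $\mathsf{E}_\varepsilon$ split as a disjoint union over $\ell$ of the unknots of $\mathsf{E}_{\varepsilon_\ell}$ lying in $\mathbb{A}_\ell$. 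The same holds for negative attachment points and for the components outside the annuli, which stay as in $\mathsf{E}_{\varepsilon_+}$.

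With this additivity, part (2) follows from the criterion in (1). The unknot count and the negative-point count are both additive over $\ell$. Each summand satisfies $|\Unknots_{\varepsilon_\ell}|\le|\mathcal{Z}_\ell|$ by Lemma~\ref{lem:Muller-C1}. Therefore equality for the total holds if and only if equality holds for every $\ell$.

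The main obstacle is justifying the locality: that smoothings at crossings in $\mathcal{G}(\mathbb{A}_\ell)$ and $\mathcal{G}(\mathbb{A}_{\ell'})$ cannot interact to create or merge unknots spanning two annuli, and cannot produce a contractible arc. My first attempt would be a tubular-neighborhood tree argument. A contractible component meeting two annuli would, by the tree structure of $\Gamma$, contain a Type A leaf in each annulus, hence at least one negative point from each annulus. Then I would argue via the homotopy classes of the $+$-strands, which are $\ell$ versus $\ell'$, that the component cannot close up, using the positioning assumption made just before Lemma~\ref{lem:null-loop-cyclic}.
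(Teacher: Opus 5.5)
Your proposal is correct, but it reaches both parts by a different route than the paper.

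\textbf{Part (1), the \emph{moreover} clause.} The paper argues by contradiction. Using Muller's Lemma C.2, it reverts negative smoothings one at a time inside $\mathcal{E}_{\mathrm{top}}$ until a single negative crossing $c$ remains. That crossing must then form a strip-free cyclic chain, so Lemma~\ref{lem:null-loop-cyclic} places $c$ in some $\mathcal{G}(\mathbb{A}_\ell)$. You go upward instead, to the all-cyclic $\teps$, via the inclusions $\varepsilon^{-1}(-)\subseteq\mathcal{G}_\varepsilon=\mathcal{G}_{\teps}$ recorded just before the proposition. This keeps you inside the strip machinery of Lemma~\ref{lem:top-chains} and avoids the external lemma. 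The cost is that you need Lemma~\ref{lem:null-loop-cyclic} for cyclic chains carrying strips, not just strip-free ones.

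\textbf{Part (2).} The paper's argument is asymmetric. Its forward direction only uses that switching negative crossings back to $+$ stays in $\mathcal{E}_{\mathrm{top}}$. Its backward direction shows that the unknots of each $\mathsf{E}_{\varepsilon_\ell}$ survive in $\mathsf{E}_\varepsilon$, then counts negative attachment points. You instead prove the stronger additivity $\Unknots_\varepsilon=\bigsqcup_\ell \Unknots_{\varepsilon_\ell}$ for arbitrary $\mathcal{Z}_\ell$. Combined with the termwise bound $|\Unknots_{\varepsilon_\ell}|\le|\mathcal{Z}_\ell|$ from Lemma~\ref{lem:Muller-C1}, this gives both directions at once and is cleaner.

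\textbf{The obstacle you flag.} Locality is not really an obstacle: it is purely combinatorial and follows from the fact you already cite. Drop the tree argument; its claim of a Type~A leaf in each annulus is not justified as stated and is not needed. The argument runs as follows.
\begin{itemize}
\item For $c\in\mathcal{Z}_\ell$, all four boundary points of $\mathcal{D}_c$ lie on loops of $\mathcal{H}_\ell$.
\item Trace a component of $\mathsf{E}_\varepsilon$ along pieces of $\mathsf{E}_{\varepsilon_+}$, rerouting only inside disks of negative crossings. A trace starting on an $\mathcal{H}_\ell$-piece can only enter disks of $\mathcal{Z}_\ell$, and it exits onto $\mathcal{H}_\ell$-pieces again.
\item Hence every component of $\mathsf{E}_\varepsilon$ is of one of two kinds. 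Either it is an untouched component of $\mathsf{E}_{\varepsilon_+}$, which is not contractible by Lemma~\ref{lem:Muller-C1} since there are no negative smoothings there. Or it is a closed curve determined by $\mathcal{H}_\ell$ and $\mathcal{Z}_\ell$ alone, and such curves are exactly the rerouted components of $\mathsf{E}_{\varepsilon_\ell}$.
\end{itemize}
This gives your additivity and also rules out contractible arcs directly.
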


\begin{proof}
    (1) 
    The first statement is implied by \eqref{eq:q-deg-top} and \eqref{eq:unknot-2-neg-pts}. To see that all $\varepsilon\in\mathcal{E}_{\mathrm{top}}$ arise this way, we use the property that any such choice can be transformed into the all-positive smoothing $\varepsilon_+$ by reverting its $-$ smoothings to $+$ smoothings one by one, with every step remaining in $\mathcal{E}_{\mathrm{top}}$ (See \cite[Lemma C.2]{muller2016skein}). Suppose for a contradiction that there exists a choice containing a $-$ smoothing at a crossing $c \notin \bigcup_\ell \mathcal{G}(\ell)$. By this step-by-step reduction, there must exist an $\varepsilon \in \mathcal{E}_{\mathrm{top}}$ where $c$ is the only crossing with a $-$ smoothing. Since $\varepsilon \in \mathcal{E}_{\mathrm{top}}$, this single $-$ smoothing at $c$ must form an unknot. Reverting this $-$ smoothing at $c$ to a $+$ smoothing transforms $\varepsilon$ into $\varepsilon_+$. Moreover, by Lemma \ref{lem:null-loop-cyclic}, the attachment points for the $+$ smoothing at $c$ lie within two isotopic loops of $E_{\varepsilon_+}$. Therefore $c \in \bigcup_\ell \mathcal{G}(\ell)$, which contradicts the assumption that $c$ is an external crossing. Thus, all $-$ smoothings must be confined to $\bigcup_\ell \mathcal{G}(\ell)$.
    
    (2) First assume $\varepsilon \in\mathcal{E}_{\mathrm{top}}$. Then each unknot $\mathsf{n}\in \Unknots_\varepsilon$ has exactly two negative attachment points and is contained in some $\mathbb{A}_{\ell}$. Note that $\mathsf{E}_{\varepsilon_\ell}$ is obtained from $\mathsf{E}_{\varepsilon}$ by attaching strips among unknots in $\bigcup_{\ell':\ \ell'\neq \ell,\ell'\in \mathcal{L}_{\varepsilon_+}}\mathbb{A}_{\ell'}$; see Lemma \ref{lem:top-chains} (2) and Lemma \ref{lem:null-loop-cyclic}. 
    Since such strip operations preserve the property of contributing to the top $q$-degree term, we have $\varepsilon_\ell\in \mathcal{E}_{\mathrm{top}}$.

Next, assume $\varepsilon_\ell \in\mathcal{E}_{\mathrm{top}}$ for all $\ell \in \mathcal{L}_{\varepsilon_+}$. By (1), there are $|\mathcal{Z}_\ell|$ unknots in $\mathsf{E}_{\varepsilon_\ell}$, each containing exactly two negative attachment points. Moreover, each unknot of $\mathsf{E}_{\varepsilon_\ell}$ is contained in $\mathbb{A}_{\ell}$ (by Lemma \ref{lem:top-chains} (2) and Lemma \ref{lem:null-loop-cyclic}). Additionally, by the construction of multicurves, $\mathsf{E}_{\varepsilon}$ and $\mathsf{E}_{\varepsilon_\ell}$ have the same restriction in $\mathbb{A}_{\ell}$. Therefore, $\mathsf{E}_{\varepsilon}$ contains all unknots of $\mathsf{E}_{\varepsilon_\ell}$ for all $\ell \in \mathcal{L}_{\varepsilon_+}$. These unknots in $\mathsf{E}_{\varepsilon}$ contribute a total of $\sum_\ell 2|\mathcal{Z}_\ell|$ negative attachment points, with each unknot providing exactly two. On the other hand, there are only $2|\varepsilon^{-1}(-)|=\sum_\ell 2|\mathcal{Z}_\ell|$ negative attachment points in $\mathsf{E}_\varepsilon$. Then (1) implies that $\varepsilon \in \mathcal{E}_{\mathrm{top}}$.
\end{proof}

  \subsection{Marked annulus}\label{sec:annulus}
  Let $\mathbb{A}_{m,m}$ denote an annulus with $m$ marked points on each boundary component, where $m\geq 1$. We first establish our main result in the local model $\mathbb{A}_{m,m}$, beginning with the base case $m=1$.

 \begin{lem} \label{prop:band-triangular}
  	  The band basis is the common triangular basis for $\mathrm{Sk}_q^{\circ}(\mathbb{A}_{1,1})$.
  \end{lem}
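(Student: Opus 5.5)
The plan is to reduce to the Kronecker case recalled in the Remark after Conjecture \ref{conj:band-is-triangular}. Here $\mathbb{A}_{1,1}$ has two marked points $p$ (outer boundary) and $p'$ (inner boundary) and two boundary arcs $b,b'$, which are frozen. A triangulation consists of $b,b'$ and two interior arcs joining $p$ to $p'$. These interior arcs form the mutable part, and the exchange matrix between them has entries $\pm 2$. Thus $\pi B$ is the Kronecker matrix, which is acyclic. By Proposition \ref{prop:ord-upper} and Proposition \ref{prop:inclusion} (each component here has at least two marked points), we get $\mathcal{A}_q=\mathrm{Sk}_q^\circ(\mathbb{A}_{1,1})=\mathcal{U}_q$. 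The frozen variables $b,b'$ only enter as localized coefficients.

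Step 1: identify the band basis explicitly. The non-contractible simple curves are the boundary arcs, the $\mathbb{Z}$-family of interior arcs $\gamma_n$, and one loop $\ell$ (the core). Two distinct $\gamma_n$ are disjoint only when they are consecutive. Hence $\mathrm{Band}^\circ_q(\mathbb{A}_{1,1})$ consists of two kinds of elements, each up to a bar-invariant $q$-shift:
\begin{itemize}
\item cluster monomials $[\gamma_n]^a[\gamma_{n+1}]^b$ times frozen monomials;
\item the elements $U_k([\ell])$ times frozen monomials.
\end{itemize}
The $g$-vectors of the cluster monomials fill all chambers except the ray spanned by $k\,g(\ell)$, where $g(\ell)$ is the imaginary direction. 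We have $\deg[\ell]=g(\ell)$ with respect to any seed, and the degree of $U_k([\ell])$ is its unique maximal term $kg(\ell)$, so it is pointed.

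Step 2: show the band basis is the triangular basis for each seed $\mathbf{s}$. Pointedness and bar-invariance are immediate: $[\ell]$ is bar-invariant, and the $q$-shifts are normalized. Cluster monomials of $\mathbf{s}$ and $\mathbf{s}[1]$ are band elements. The main work is degree triangularity: $x_i * \langle \mathsf{X}\rangle$, suitably $q$-shifted, should equal $\langle \mathsf{X}'\rangle$ plus terms $\prec_{\mathbf{s}}$-lower with coefficients in $q^{-1/2}\mathbb{Z}[q^{-1/2}]$. For frozen $x_i$ this is trivial, since it is just multiplication by a $q$-commuting monomial. For mutable $x_i=[\gamma_n]$ and $\mathsf{X}$ an arc monomial, the product is computed in the rank-2 Kronecker cluster algebra, where the expansion is known. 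For $\mathsf{X}=k\ell$ we use the skein relation $[\gamma_n][\ell]=q^{\pm1}[\gamma_{n+1}]\cdot(\text{frozen})+q^{\mp1}[\gamma_{n-1}]\cdot(\text{frozen})$; after shifting, the second term is lower. Combined with the Chebyshev recursion, induction on $k$ then gives
\[
[\gamma_n]U_k([\ell])\in q^{*}\langle \gamma_n\cup k\ell\text{-type leading term}\rangle+\sum q^{-1/2}\mathbb{Z}[q^{-1/2}]\,(\text{lower band elements}).
\]
This must be rewritten in terms of the band elements whose degree is $g(\gamma_n)+kg(\ell)$; that degree lies in a cluster chamber, so the leading element is a cluster monomial in $\gamma_{n+k}$ and $\gamma_{n+k+1}$, with frozen factors.

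Step 3 (alternative, and probably cleanest). Use the uniqueness of the triangular basis, which follows from Kazhdan--Lusztig-type bar-invariance plus triangularity. For an acyclic seed, the common triangular basis coincides with the Berenstein--Zelevinsky triangular basis, and in the Kronecker case that equals the dual canonical basis. As recalled in the Remark, the imaginary-root elements of the dual canonical basis satisfy the Chebyshev recursion \cite{Lamp10}, so they equal $U_k([\ell])$ up to frozen factors. Real-root elements are cluster monomials. Hence the two bases agree. Commonness across all seeds follows from \cite{qin2019compare}: the triangular basis for an acyclic seed is common. I expect the main obstacle to be matching the frozen and quantization data of $\mathbb{A}_{1,1}$ (including $\Lambda^\Delta B^\Delta=4\iota$, i.e.\ $q$ versus $q^{1/2}$ normalization, and the choice of frozen rows) with the setting of \cite{Lamp10}. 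The step I would try first is checking that the Chebyshev identification $U_k([\ell])=L_{kg(\ell)}$ survives coefficient specialization: both sides are bar-invariant and pointed at $kg(\ell)$, and their difference is bar-invariant with coefficients in $q^{-1/2}\mathbb{Z}[q^{-1/2}]$, hence zero.
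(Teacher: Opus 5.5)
\textbf{There is a genuine gap, and it sits exactly at the step you flag as the main obstacle; your proposed way past it does not work.}

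Your Step 3 has the same skeleton as the paper's proof. The triangular basis of the Kronecker cluster algebra satisfies the Chebyshev recursion $L_\delta * L_{k\delta}=L_{(k+1)\delta}+L_{(k-1)\delta}$; the paper takes this from Berenstein--Zelevinsky, Prop.~6.1, in the coefficient-free seed, rather than from \cite{Lamp10}. Every band element not involving $\ell$ is a cluster monomial. Induction from $k=0,1$ then gives $L_{k\delta}=U_k([\ell])$.

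The problem is that the recursion is known for a different coefficient pattern: coefficient-free, or the quantum unipotent setting of \cite{Lamp10}. It is not known a priori for the seed of $\mathbb{A}_{1,1}$, with frozen rows $(-1,1),(-1,1)$ and $\Lambda=\Lambda^\Delta$.
\begin{itemize}
\item Saying the elements equal $U_k([\ell])$ ``up to frozen factors'' is not enough. If the transported recursion read $L_\delta*L_{k\delta}=L_{(k+1)\delta}+q^{\beta}p\,L_{(k-1)\delta}$ with a nontrivial frozen monomial $p$ or $q$-power, then $L_{k\delta}$ would simply not be $U_k([\ell])$. You must show these factors are trivial, not tolerate them.
\item Your fix is circular. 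You argue that both elements are bar-invariant and $kg(\ell)$-pointed, so their difference is bar-invariant with coefficients in $q^{-1/2}\mathbb{Z}[q^{-1/2}]$, hence zero. But that $U_k([\ell])-L_{k\delta}$ expands in $\mathbf{L}$ with coefficients in $q^{-1/2}\mathbb{Z}[q^{-1/2}]$ is precisely the triangularity you are trying to prove.
\item Bar-invariance plus pointedness determine nothing by themselves. For example, $[\ell]^2$ and $U_2([\ell])=[\ell]^2-1$ are both bar-invariant and $2g(\ell)$-pointed, and they differ by an integer coefficient. For the same reason, the base case $L_\delta=[\ell]$ needs an actual computation, not just $\deg[\ell]=g(\ell)$.
\end{itemize}

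\textbf{The missing ingredient, as the paper supplies it.} Since $\sd_\Delta$ has the same principal part as the coefficient-free Kronecker seed, its common triangular basis exists. The correction technique of \cite{Qin12} transports the recursion to
$q^{-\frac12\Lambda'(\delta,k\delta)}b'_\delta*b'_{k\delta}=b'_{(k+1)\delta}+q^{\beta'_{k-1}}b'_{(k-1)\delta}*p_{k-1}$, with $p_{k-1}$ a frozen monomial.
\begin{itemize}
\item Comparing degrees gives $\deg(b'_{(k-1)\delta}*p_{k-1})=\deg b'_{(k+1)\delta}+\deg y'_1+\deg y'_2$. Since $\deg y'_1+\deg y'_2=-2\delta$ has zero frozen part, this forces $p_{k-1}=1$.
\item $\Lambda'(\delta,k\delta)=0$ by skew-symmetry, and $\beta'_{k-1}=0$ by direct computation.
\item The same technique gives $b'_\delta=x'_1(x'_2)^{-1}(1+y'_2+y'_1y'_2)$, which one checks equals $[\ell]$.
\end{itemize}

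\textbf{On your Step 2.} Verifying degree triangularity of the band basis directly in every seed and invoking uniqueness would be a genuinely different route. One piece is easy. From $[\gamma_n][\ell]=q[\gamma_{n+1}]+q^{-1}[\gamma_{n-1}]$ (labelling so the positive smoothing gives $\gamma_{n+1}$) and the Chebyshev recursion, one gets $[\gamma_n]U_k([\ell])=\sum_{j=0}^{k}q^{k-2j}[\gamma_{n+k-2j}]$. This has the required shape once the dominance comparisons are checked. You would still need the analogous statement for $x_i$ times cluster monomials from arbitrary seeds, in every seed. As written, Step 2 does not close the argument either.
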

   
  \begin{proof}
  Let $\sd$ be a quantum seed with $B$-matrix $B=\begin{pmatrix} 
  0 & -2 \\\\[-10pt]
  2 & 0 
  \end{pmatrix}$,
  and let $\bUpClAlg_q(\sd)$ denote the corresponding quantum cluster algebra. Its triangular basis in the sense of Berenstein-Zelevinsky \cite{BerensteinZelevinsky2012} coincides with the common triangular basis defined in \cite{qin2017triangular} (cf. \cite{Qin2016compare}). Let $L_g$ denote the $g$-pointed basis element, and set $\delta = f_1 - f_2$. By \cite[Proposition 6.1]{BerensteinZelevinsky2012}, the triangular basis elements $L_{k\delta}$ for $k\in \mathbb{N}$ satisfy the Chebyshev recursion:
  \begin{equation}\label{eq:Chebyshev-coeff-free}
      L_\delta* L_{k\delta}=L_{(k+1)\delta}+L_{(k-1)\delta},\quad \text{for all } k \geq 1.
  \end{equation}

Let $x'_3,x'_4$ denote the inner and outer boundary arcs of $\mathbb{A}_{1,1}$, respectively. Let $\Delta$ be an arbitrary triangulation of $\mathbb{A}_{1,1}$. Without loss of generality, we may write $\Delta=(x'_1,x'_2,x'_3,x'_4)$ such that the $B$-matrix of the associated quantum seed $\sd'=\sd_\Delta$ is 
  $B^\Delta=\begin{pmatrix} 
  0 & -2 \\\\[-10pt]
  2 & 0 \\\\[-10pt]
  -1 & 1\\\\[-10pt]
  -1 & 1
  \end{pmatrix}$.
The compatibility matrix $\Lambda'=\Lambda^\Delta$ is given in Section \ref{subsec:clAlg-surface}. The initial $y$-variables $y'_i$ have degrees $\deg (y'_1)=2f_2-f_3-f_4$, $\deg (y'_2)=-2f_1+f_3+f_4$. 

Since $\sd'$ and $\sd$ share the same principal part of the $B$-matrix, $\bUpClAlg(\sd')$ admits a common triangular basis \cite{qin2017triangular}. Let $b'_{k\delta}$ denote the $k\delta$-pointed common triangular basis element of $\bUpClAlg(\sd')$, where $\delta=f_1-f_2$ and $k\in \mathbb{N}$, with the convention $b'_0=1$. Applying the correction technique from \cite{Qin12} (cf.\ \cite{qin2023analogs}) yields $b'_\delta = x'_1 (x'_2)^{-1} (1 + y'_2 + y'_1 y'_2)$. Furthermore, these basis elements satisfy a relation analogous to \eqref{eq:Chebyshev-coeff-free}:
 \begin{align}
	q^{-\frac{1}{2}\Lambda'(\delta,k\delta )}b'_\delta * b'_{k\delta}=b'_{(k+1)\delta}+q^{\beta'_{k-1}} b'_{(k-1)\delta} * p_{k-1},\quad \text{for some } \beta'_{k-1} \in \Hf\mathbb{Z}, \text{ and all } k\geq 1,
  \end{align}
where $p_{k-1}$ are monomials in $(x'_3)^{\pm 1}, (x'_4)^{\pm 1}$ such that $\deg (b'_{(k-1)\delta}*p_{k-1})=\deg (b'_{(k+1)\delta})+\deg (y'_1) + \deg (y'_2)$. We deduce that $p_{k-1}=1$. Direct computation shows that $\Lambda'(\delta,k\delta )=0$ and $\beta'_{k-1}=0$. Consequently, we obtain the Chebyshev recursion:
 \begin{align}
	b'_\delta*b'_{k\delta}=b'_{(k+1)\delta}+b'_{(k-1)\delta},\ \forall k\geq 1.
  \end{align}

 Finally, $\bUpClAlg(\sd')$ and $\mathrm{Sk}_q^{\circ}(\mathbb{A}_{1,1})$ are identified via their natural isomorphism. Let $\ell$ denote the simple loop in $\mathbb{A}_{1,1}$, with associated band elements $\mathrm{Band}^k(\ell)$ for $k \in \mathbb{N}$. Since all other elements of the band basis are cluster monomials (and are therefore contained in the common triangular basis), it suffices to prove $\mathrm{Band}^k(\ell) = b'_{k\delta}$ for all $k \in \mathbb{N}$. This equality holds for $k=0, 1$ by direct computation, and the general case follows inductively from the Chebyshev recursion.
  \end{proof}
  
  \begin{prop} \label{prop:band-triangular-mm}
  	  The band basis is the common triangular basis for $\mathrm{Sk}_q^{\circ}(\mathbb{A}_{m,m})$.
  \end{prop}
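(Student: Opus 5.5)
The plan is to reduce to the base case $\mathbb{A}_{1,1}$ (Lemma \ref{prop:band-triangular}) using the principle that the common triangular basis depends, up to frozen data, only on the principal part of the exchange matrix. As in that proof, we identify $\mathrm{Sk}_q^{\circ}(\mathbb{A}_{m,m})$ with $\bUpClAlg_q$ via Proposition \ref{prop:inclusion}; here each boundary component carries at least one marked point, and the total is at least two, so the inclusions are isomorphisms. The exchange type of $\mathbb{A}_{m,m}$ is affine $\tilde{A}_{m,m}$, hence acyclic. So Proposition \ref{prop:ord-upper} applies, and the common triangular basis exists by \cite{qin2017triangular}, since acyclic cluster algebras with the triangular basis of Berenstein--Zelevinsky are covered there (cf. \cite{qin2019compare}). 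Write $\mathbf{L}$ for this basis.

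The band basis of $\mathbb{A}_{m,m}$ consists of two kinds of elements. The first kind are cluster monomials, namely weighted simple multicurves with only arc components, with boundary arcs inverted. The second kind are elements $U_k(\ell)\cdot[\Gamma]$, where $\ell$ is the unique nontrivial simple loop class and $\Gamma$ is a collection of arcs disjoint from $\ell$. Such arcs are exactly the bridging-free arcs lying on one side, i.e.\ arcs with both endpoints on the same boundary, together with boundary arcs. Cluster monomials lie in $\mathbf{L}$ by definition. It therefore suffices to show two things: that $U_k(\ell)=L_{k\delta}$ for the appropriate degree $k\delta$, and that the bar-invariant normalization of $U_k(\ell)\cdot[\Gamma]$ equals $L_{k\delta+\deg\Gamma}$.

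For the loop elements, we choose a triangulation $\Delta$ containing two bridging arcs $x_1,x_2$ that form a Kronecker pair, with all other arcs having both endpoints on a single boundary. Cutting along $x_1\cup x_2$ leaves two polygons. Then $\ell$ crosses only $x_1,x_2$, each once. We then run the argument of Lemma \ref{prop:band-triangular} in this seed. First we compute $[\ell]$ explicitly as the $\delta$-pointed element with $\delta=f_1-f_2$ plus frozen and polygon contributions, and check that it is bar-invariant and compatibly pointed; by Proposition \ref{prop:compati} it is then $L_\delta$. Next we obtain the Chebyshev relation $L_\delta * L_{k\delta}=L_{(k+1)\delta}+q^{\beta}L_{(k-1)\delta}*p$ from triangularity in the Kronecker subseed. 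The exponent $\beta$ and the monomial $p$ are then forced to be $0$ and $1$ by degree comparison, as in the base case. Since $U_k$ satisfies the same recursion, induction gives $U_k(\ell)=L_{k\delta}$.

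The arcs in $\Gamma$ commute with $\ell$ up to $q$-powers. By choosing $\Delta\supset\Gamma$ (possible since $\Gamma$ is disjoint from $\ell$) and using degree triangularity with respect to each $x_i\in\Gamma$ iteratively, we get $[x_i]*L_{g}\in q^{*}L_{g+e_i}+\sum_{g'\prec}q^{-1/2}\mathbb{Z}[q^{-1/2}]L_{g'}$. When $x_i$ is disjoint from $\ell$, the product $[x_i]U_k(\ell)$ is a single bar-invariant pointed element up to a $q$-power, so the lower terms vanish.

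The main obstacle is this last step. Triangularity alone does not force the lower terms to vanish, so I would argue directly. The element $q^{\lambda/2}[\Gamma]U_k(\ell)$ is bar-invariant and pointed. Its expansion in $\mathbf{L}$ therefore has bar-invariant coefficients, while triangularity forces those coefficients to lie in $q^{-1/2}\mathbb{Z}[q^{-1/2}]$. Together these force the coefficients to be zero, which is the standard Kazhdan--Lusztig uniqueness argument. The remaining point to verify carefully is that the product of a cluster variable with $L_{k\delta}$ in $\mathbf{L}$ really stays in the $\mathbb{Z}[q^{-1/2}]$-lattice spanned by $\mathbf{L}$. This holds because $\mathbf{L}$ is triangular in a seed containing $\Gamma$ and $x_1,x_2$ simultaneously.
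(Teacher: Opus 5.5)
Your last step is sound. Once $U_k(\ell)=L_{k\delta}$ is known, the lower coefficients of $[\Gamma]U_k(\ell)$ in $\mathbf{L}$ are bar-invariant and, by degree triangularity in a seed containing each arc of $\Gamma$, lie in $q^{-1/2}\mathbb{Z}[q^{-1/2}]$, so they vanish. This handles the elements $[\Gamma]U_k(\ell)$ with non-frozen peripheral $\Gamma$, which the paper's proof passes over.

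The gap is in the core step, $U_k(\ell)=L_{k\delta}$ for $m\ge 2$. There are two problems.
\begin{itemize}
\item Proposition \ref{prop:compati} only says that compatible pointedness is equivalent to bipointedness with the support dimension prescribed by the degree. It does not single out $L_g$ among bar-invariant, compatibly pointed, $g$-pointed elements: $[\ell]^2$ and $U_2([\ell])=[\ell]^2-1$ both have these properties in degree $2\delta$. So ``$[\ell]$ is bar-invariant and compatibly pointed, hence $[\ell]=L_\delta$'' does not follow.
\item More seriously, degree triangularity only controls products $x_i(\mathbf{s})*L_g$ with cluster variables of a seed. It says nothing about $L_\delta*L_{k\delta}$, since $L_\delta$ is not a cluster monomial. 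In Lemma \ref{prop:band-triangular} the Chebyshev recursion was not deduced from triangularity. It was imported from \cite[Proposition 6.1]{BerensteinZelevinsky2012} for the coefficient-free Kronecker seed, which is legitimate only because the seed of $\mathbb{A}_{1,1}$ has exactly the Kronecker principal part. In your triangulation of $\mathbb{A}_{m,m}$ with $m\ge 2$, the triangles on either side of the pair $x_1,x_2$ force two peripheral arcs, $z$ (outer) and $w$ (inner). These are mutable and adjacent to both $x_1$ and $x_2$ in the quiver. So the principal part is the full rank-$2m$ matrix, and there is no ``Kronecker subseed'' whose triangular basis is a priori related to $\mathbf{L}$.
\end{itemize}

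The missing idea is the paper's localization by freezing. Both $L_{k\delta}$ and the bangle $[\ell]^k$ are compatibly pointed of the same degree, so Proposition \ref{prop:compati} gives them the same support dimension; hence $\mathrm{supp}(L_{k\delta})\subseteq\{x_1,x_2\}$. Freezing every other arc amounts to passing to the subsurface $\Sigma_\ell\simeq\mathbb{A}_{1,1}$ bounded by $z$ and $w$; your triangulation is exactly the paper's $\Delta'$. By \cite[Theorem 3.17]{qin2023analogs}, a common triangular basis element supported on the unfrozen vertices is a triangular basis element of $\mathrm{Sk}_q^{\circ}(\Sigma_\ell)$. There Lemma \ref{prop:band-triangular} identifies it with $U_k(\ell)$. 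In short, Proposition \ref{prop:compati} should be used to control supports, not to identify elements, and the Chebyshev recursion is available only inside $\Sigma_\ell$. Without this step, or an independent proof of the recursion for $L_{k\delta}$ in the rank-$2m$ algebra, your induction has no valid recursion to run on.
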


  \begin{proof}
      Consider the annulus $\mathbb{A}_{m,m}$ for $m \geq 1$. There exists a triangulation $\Delta$ of $\mathbb{A}_{m,m}$ such that the associated quantum seed $\sd_\Delta = (B^\Delta, \Lambda^\Delta, M^\Delta)$ is acyclic and of affine type. Specifically, such a triangulation $\Delta$ is given explicitly as follows:

      \begin{figure}[H]
          \centering
          \begin{tikzpicture}[scale=1.4]
     	\def\innerR{1}    
     	\def\outerR{2}    
     	
     	\draw[thick] (0,0) circle (\innerR);
     	\draw[thick] (0,0) circle (\outerR);
     	
     	\foreach \angle/\label in {0/2, 270/3, 180/m, 90/1} {
     		\node[circle, fill=black, inner sep=1.5pt] (inner-\label) at (\angle:\innerR) {};
     		\node[font=\small, black] at (\angle:\innerR-0.3) {\label};
     	}
     	
     	\foreach \angle/\label in {0/2, 270/3, 180/m, 90/1} {
     		\node[circle, fill=black, inner sep=1.5pt] (outer-\label) at (\angle:\outerR) {};
     		\node[font=\small, black] at (\angle:\outerR+0.3) {\label};
     	}
     	
     	\node[font=\Large, black] at (225:{(\innerR+\outerR)/2}) {$\cdots$};
     	
     	\node[font=\Large, black] at (225:\innerR-0.3) {$\cdots$};
     	
     	\node[font=\Large, black] at (225:\outerR+0.3) {$\cdots$};
     	
        \draw[thick, black] (inner-1) -- (outer-1);
        \draw[thick, black] (inner-2) -- (outer-2);
        \draw[thick, black] (inner-m) -- (outer-m);
        \draw[thick, black] (inner-3) -- (outer-3);
     	
     	\draw[thick, black] 
     	(outer-1) to [out=-30, in=60] (inner-2);
     	
     	\draw[thick, black] 
     	(outer-2) to [out=240, in=-30] (inner-3);
     	
     	\draw[thick, black] 
     	(outer-m) to [out=60, in=150] (inner-1);
     \end{tikzpicture}
          \caption{The triangulation $\Delta$ of the annulus $\mathbb{A}_{m,m}$.}
          \label{fig:acyclic-tri}
      \end{figure}

      The quiver $Q(\Delta)$ associated with $\Delta$ is depicted in Figure \ref{fig:quiver-tri}, where each circular vertex labeled $ij$ corresponds to the interior arc connecting marked points $i$ and $j$ in Figure \ref{fig:acyclic-tri}.
      
     \begin{figure}[H]
         \centering
          \begin{tikzpicture}[scale=1, 
	circle vertex/.style={circle, draw, minimum size=9mm},
	square vertex/.style={rectangle, draw, minimum size=7mm},
	arrow/.style={-{Stealth[length=2.5mm]}, thick}
	]

	\node[circle vertex] (v1) at (0,0) {11};
	\node[square vertex] (v2) at (1,-1.5) {}; 
	\node[circle vertex] (v3) at (2,0) {12};     
	\node[square vertex] (v4) at (3,1.5) {};
	\node[circle vertex] (v5) at (4, 0) {22};
	
	\node[square vertex] (v6) at (5,-1.5) {}; 
	\node[circle vertex] (v7) at (6,0) {23};     
	\node[square vertex] (v8) at (7,1.5) {};
	\node[circle vertex] (v9) at (8, 0) {33};

	\draw[arrow] (v1) -- (v3); 
	\draw[arrow, dashed] (v2) -- (v1); 
	\draw[arrow, dashed] (v3) -- (v2); 
	\draw [arrow, dashed] (v3) -- (v4);
	\draw [arrow, dashed] (v4) -- (v5);
	\draw [arrow] (v5) -- (v3);
	
	\draw[arrow] (v5) -- (v7); 
	\draw[arrow, dashed] (v6) -- (v5); 
	\draw[arrow, dashed] (v7) -- (v6); 
	\draw [arrow, dashed] (v7) -- (v8);
	\draw [arrow, dashed] (v8) -- (v9);
	\draw [arrow] (v9) -- (v7);
	
	\node[font=\Large, black] at (9,0) {$\cdots$};
	\node[circle vertex] (v10) at (10,0) {mm};
	\node[square vertex] (v12) at (11,-1.5) {}; 
	\node[circle vertex] (v13) at (12,0) {m1};     
	\node[square vertex] (v14) at (13,1.5) {};
	\node[circle vertex] (v15) at (14, 0) {11};
	
	\draw[arrow] (v10) -- (v13); 
	\draw[arrow, dashed] (v12) -- (v10); 
	\draw[arrow, dashed] (v13) -- (v12); 
	\draw [arrow, dashed] (v13) -- (v14);
	\draw [arrow, dashed] (v14) -- (v15);
	\draw [arrow] (v15) -- (v13);
	
	\draw [dashed] (0,1) -- (0, -1);
	\draw [dashed] (14,1) -- (14, -1);
     \end{tikzpicture}
         \caption{The acyclic quiver $Q(\Delta)$ associated with $\Delta$.}
         \label{fig:quiver-tri}
    \end{figure}

    Recall that an acyclic quantum cluster algebra admits a common triangular basis, established initially for certain coefficients by \cite{KimuraQin14} and extended to arbitrary full-rank coefficients in \cite{qin2017triangular}. Proposition \ref{prop:inclusion} (\cite[Theorem 7.15, Theorem 9.8]{muller2016skein}) yields
    \[
	               \bClAlg_q (\mathbb{A}_{m,m}) = \mathrm{Sk}_q^\circ (\mathbb{A}_{m,m}) = \bUpClAlg_q (\mathbb{A}_{m,m}).
	           \]
    Consequently, $\mathrm{Sk}_q^\circ (\mathbb{A}_{m,m})$ admits a common triangular basis $\mathbf{L}$ with non-negative structure constants. 

It remains to show that all band elements belong to $\mathbf{L}$. Since every band element is either a cluster monomial (and thus already in $\mathbf{L}$) or of the form $\mathrm{Band}^k(\ell)$ for a simple loop $\ell$ and $k \in \mathbb{N}$, it suffices to prove that $\mathrm{Band}^k(\ell) \in \mathbf{L}$ for all $k \in \mathbb{N}$.

   Let $\Sigma_{\ell} \simeq \mathbb{A}_{1,1}$ be an annular subsurface bounded by arcs $\mathsf{x}$ and $\mathsf{y}$:
    \begin{figure}[H]
        \centering
        \begin{tikzpicture}[scale=1.2]
		\def\innerR{1}    
		\def\outerR{2}    
		
		\draw[thick] (0,0) circle (\innerR);
		\draw[thick] (0,0) circle (\outerR);
            \draw[thick, green] (0,0) circle (1.5);

            \node[red] at (60: \outerR-.3) {$\mathsf{x}$};
            \node[green] at (90: 1.35) {$\ell$};
            \node[blue] at (120: \innerR+.1) {$\mathsf{y}$};

		\foreach \angle/\label in {0/2, 270/3, 180/m, 90/1} {
			\node[circle, fill=black, inner sep=1.5pt] (inner-\label) at (\angle:\innerR) {};
			\node[font=\small, black] at (\angle:\innerR-0.3) {\label};
		}
		
		\foreach \angle/\label in {0/2, 270/3, 180/m, 90/1} {
			\node[circle, fill=black, inner sep=1.5pt] (outer-\label) at (\angle:\outerR) {};
			\node[font=\small, black] at (\angle:\outerR+0.3) {\label};
		}
		
		\node[font=\Large, black] at (225:\innerR-0.3) {$\cdots$};
		
		\node[font=\Large, black] at (225:\outerR+0.3) {$\cdots$};
		
		\draw[thick,red, >=Stealth]
		(90:2) to [out=-90, in=150] 
		(.2, 1.7) to [out= -30, in=90]
		 (1.7,0) to [out=-90, in=0] 
		 (0,-1.7) to [out=180, in=-90] 
		 (-1.7,0) to [out=90, in=-150]
		  (-.2, 1.7) to [out=30, in=-90] (90:2);
		  
		 \draw[thick,blue, >=Stealth]
		 (90:1) to [out=90, in=160] 
		 (.3, 1.23) to [out=-20, in=90] 
		 (1.35,0) to [out=-90, in=0] 
		 (0,-1.35) to [out=180, in=-90]
		 (-1.35,0) to [out=90, in=-160] 
		 (-.3,1.23) to [out=20, in=90] (90:1);

	\end{tikzpicture}
        \caption{The annular subsurface $\Sigma_{\ell} \simeq \mathbb{A}_{1,1}$.}
        \label{fig: subsurface}
    \end{figure}

    Extend the arcs $\mathsf{x}, \mathsf{y}$ to form a triangulation $\Delta'$. Cutting along $\mathsf{x}$ and $\mathsf{y}$ yields the decomposition 
    \[
    \mathbb{A}_{m,m}=\Sigma_{\ell} \cup \Sigma_1 \cup \Sigma_2.
    \]

    \begin{figure}[H]
        \centering
        \begin{tikzpicture}[scale=1.2]
        \begin{scope}
            \def\outerR{2}    
	    \draw[thick] (0,0) circle (\outerR);
	    \node[font=\Large, black] at (225:\outerR+0.3) {$\cdots$};
	    \foreach \angle/\label in {0/2, 270/3, 180/m, 90/1} {
	    	\node[circle, fill=black, inner sep=1.5pt] (outer-\label) at (\angle:\outerR) {};
	    	\node[font=\small, black] at (\angle:\outerR+0.3) {\label};
	    }

            \node[font=\Large, red] at (60: \outerR-.5) {$\mathsf{x}$};
	    \draw[thick, red] (0,.8) circle (1.2);
	    
	    \draw[thick, >=Stealth] 
	    (90:2) to [out=0, in=90] 
	    (1.7,0) to [out=-90, in=25] (270:2) ;
	    
	     \draw[thick, >=Stealth] 
	    (90:2) to [out=0, in=75] 
	    (1.4,0) to [out=-105, in=-70] (180:2) ;
	    
	    \node[font=\Large, black] at (260:{3/2}) {$\cdots$};
        \end{scope}

        \begin{scope} [xshift=6cm, yshift=1cm]
	\def\innerR{1}    

	\draw[thick] (0,0) circle (\innerR);

	\foreach \angle/\label in {0/2, 270/3, 180/m, 90/1} {
		\node[circle, fill=black, inner sep=1.5pt] (inner-\label) at (\angle:\innerR) {};
		\node[font=\small, black] at (\angle:\innerR-0.3) {\label};
	}

        \node[font=\Large, blue] at (150: \innerR+.5) {$\mathsf{y}$};
	\draw[thick, blue] (0,-1) circle (2);
	
	\node[font=\Large, black] at (225:\innerR-0.3) {$\cdots$};
	
	\draw[thick, >=Stealth] 
	(90:1) to [out=0, in=75] 
	(1.3,-.5) to [out=-105, in=-30] (270:1) ;
	
	\draw[thick, >=Stealth] 
	(90:1) to [out=0, in=90] 
	(1.7,-.8) to [out=-90, in=0]
	 (0.2,-2.3) to [out=180, in=-120] (180:1);
	 
	  \node[font=\Large, black] at (270:{1.7}) {$\cdots$};
        \end{scope}

    \end{tikzpicture}
        \caption{The subsurfaces $\Sigma_1$ and $\Sigma_2$ with their induced triangulations.}
        \label{fig:subsur}
    \end{figure}

    \begin{figure}[H]
        \centering
        \begin{tikzpicture}[scale=1.2]
        \begin{scope}[xshift=-2cm]
	\def\innerR{1}    
	\def\outerR{2}    
	
	\draw[thick, blue] (0,0) circle (\innerR);
	\draw[thick, red] (0,0) circle (\outerR);
        \node[font=\Large, red] at (60: \outerR+.3) {$\mathsf{x}$};
	\node[font=\Large, blue] at (60: \innerR-.3) {$\mathsf{y}$};
    
	\foreach \angle/\label in {90/1} {
		\node[circle, fill=black, inner sep=1.5pt] (inner-\label) at (\angle:\innerR) {};
		\node[font=\small, black] at (\angle:\innerR-0.3) {\label};
	}
	
	\foreach \angle/\label in {90/1} {
		\node[circle, fill=black, inner sep=1.5pt] (outer-\label) at (\angle:\outerR) {};
		\node[font=\small, black] at (\angle:\outerR+0.3) {\label};
	}
	
	\draw[thick, black] (inner-1) -- (outer-1);
	
	\draw[thick, >=Stealth] 
	(90:2) to [out=-60, in=90] 
	(1.6,-0.12) to [out=-90, in=0]
	(0,-1.6) to [out=180, in=-90]
	 (-1.6,-0.12) to [out=90, in=120] (90:1);
        \end{scope}

       \begin{scope}[xshift=2cm,
	circle vertex/.style={circle, draw, minimum size=7mm},
	square vertex/.style={rectangle, draw, minimum size=7mm},
	arrow/.style={-{Stealth[length=2.5mm]}, thick}
	]
	\node[circle vertex] (v1) at (0,0) {};
	\node[square vertex, blue, font=\Large] (v2) at (1.5,-1.2) {$\mathsf{y}$}; 
	\node[circle vertex] (v3) at (3,0) {};     
	\node[square vertex, red, font=\Large] (v4) at (1.5,1.2) {$\mathsf{x}$};

	 \draw[arrow] ([yshift=1mm]v1.east) -- ([yshift=1mm]v3.west);
	  \draw[arrow] ([yshift=-1mm]v1.east) -- ([yshift=-1mm]v3.west); 
	\draw[arrow, dashed] (v2) -- (v1); 
	\draw[arrow, dashed] (v3) -- (v2); 
	\draw [arrow, dashed] (v3) -- (v4);
	\draw [arrow, dashed] (v4) -- (v1);
	       \end{scope}
\end{tikzpicture}
        \caption{The triangulation $\Delta_{\ell}$ and its associated quiver $Q(\Delta_{\ell})$.}
        \label{fig:subtri}
    \end{figure}

  Let $g_\ell^{\Delta'} := \deg([\ell])$, and consider the $kg_\ell^{\Delta'}$-pointed element $L_{kg_\ell^{\Delta'}} \in \mathrm{Sk}^\circ_q (\mathbb{A}_{m,m})$ of the common triangular basis. By definition, every element of the common triangular basis is compatibly pointed at all seeds of $\mathrm{Sk}^\circ_q (\mathbb{A}_{m,m})$. Furthermore, the $kg_\ell^{\Delta'}$-pointed bangle element $[\ell]^k$ is also compatibly pointed at all seeds (cf.\ \cite[Theorem 4.3]{reading2014universal}).

  By Proposition \ref{prop:compati} (\cite[Proposition 3.4.8]{qin2019bases}), the support dimensions of compatibly pointed elements are determined entirely by their degrees. This yields
  \[
  \mathrm{supp}(L_{kg_\ell^{\Delta'}})=\mathrm{supp}([\ell]^k) \subset \mathbf{ex}(\Delta_\ell).
  \]
Restriction to the subsurface $\Sigma_{\ell}$ via the freezing operator from \cite{qin2023analogs} acts trivially on $L_{kg_\ell^{\Delta'}}$. It then follows from \cite[Theorem 3.17]{qin2023analogs} that $L_{kg_\ell^{\Delta'}}$ is also a triangular basis element of $\mathrm{Sk}_q^\circ (\Sigma_\ell)$. On the other hand, Lemma \ref{prop:band-triangular} dictates that $\mathrm{Band}^k(\ell)$ is the corresponding common triangular basis element of $\mathrm{Sk}_q^\circ (\Sigma_\ell)$ with the same degree. Consequently,
$$
L_{kg_\ell^{\Delta'}} = \mathrm{Band}^k(\ell).
$$

 Thus, $\mathrm{Band}^k(\ell)$ is an element of the common triangular basis $\mathbf{L}$ for $\mathrm{Sk}^\circ_q (\mathbb{A}_{m,m})$. This completes the proof that all band elements belong to $\mathbf{L}$.
  \end{proof}
  
  \begin{prop}[Leading degree properties]\label{prop:leading-g}
  	Let $\mathbb{A}_{m,m}$ be an annulus with $m \geq 1$ marked points per boundary component. Consider multicurves $\mathsf{X}, \mathsf{Y}$ such that $[\mathsf{X}]$ is a nonzero cluster monomial, $[\mathsf{Y}] \neq 0$, and their superposition $\mathsf{X} \cdot \mathsf{Y}$ is transverse with minimal crossings. Then:
  	\begin{enumerate}
  		\item $[\mathsf{E}_{\varepsilon_+}]\neq 0$. 
  		\item $[\mathsf{E}_{\varepsilon_-}]\neq 0$.
  		\item Fix a seed $\mathbf{s}$ such that $[\mathsf{X}]$ is an initial monomial. Then, with respect to $\mathbf{s}$,
  		$$\deg ([\mathsf{E}_{\varepsilon_+}])=\deg ([\mathsf{X}])+\deg ([\mathsf{Y}]).$$ 
  		Furthermore, $$\Lambda(\deg ([\mathsf{X}]),\deg ([\mathsf{Y}]))=2\mu(\mathsf{X}, \mathsf{Y})+ 2a,$$
  		where $\mu(\mathsf{X}, \mathsf{Y})$ is the minimal intersection number and $a$ is the boundary intersection number (with $a=0$ if $\mathsf{X}$ and $\mathsf{Y}$ do not intersect at marked points).
  		\item $\mathrm{codeg}([\mathsf{E}_{\varepsilon_-}])=\mathrm{codeg} ([\mathsf{X}])+\mathrm{codeg} ([\mathsf{Y}]).$
  	\end{enumerate}
  \end{prop}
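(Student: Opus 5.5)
The plan is to exploit Proposition \ref{prop:band-triangular-mm}: $\mathrm{Sk}_q^\circ(\mathbb{A}_{m,m})$ has the common triangular basis $\mathbf{L}$, and it equals the band basis. First expand $[\mathsf{X}][\mathsf{Y}]$ as in \eqref{eq:XY-decomp}. Reduce to $[\mathsf{Y}]$ being a shifted multicurve basis element. This is possible because a bangle of loops differs from the band element only by lower Chebyshev terms, which are again simple multicurves of strictly smaller degree. Fix a seed $\mathbf{s}$ in which $[\mathsf{X}]$ is an initial monomial $x^{g_X}$. By the triangularity axiom of $\mathbf{L}$ applied repeatedly (one factor $x_i$ at a time), $x^{g_X}*\langle \mathsf{Y}\rangle$ equals $q^{\alpha/2}L_{g_X+g_Y}$ plus terms of strictly lower degree with coefficients in $q^{-1/2}\mathbb{Z}[q^{-1/2}]$. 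Passing back to bangles, we get that $[\mathsf{X}][\mathsf{Y}]$ has a unique $\prec_\mathbf{s}$-maximal term, of degree $g_X+g_Y$. Its coefficient is a single power of $q$, and that power is the \emph{highest} $q$-power in the whole expansion.

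Next, identify this leading term with $[\mathsf{E}_{\varepsilon_+}]$. By \eqref{eq:detail-dec} and \eqref{eq:q-deg-inequality}, the top $q$-degree terms are exactly those with $\varepsilon\in\mathcal{E}_{\mathrm{top}}$. By Proposition \ref{prop:indep-config} and Lemma \ref{lem:null-loop-cyclic}, these differ from $\varepsilon_+$ only by turning pairs of parallel loops in $\mathbb{A}_\ell$ into unknots. That is precisely the Chebyshev correction: the top-$q$ part assembles into $q^{\bullet}\langle\widehat{\mathsf{E}}_{\varepsilon_+}\rangle$. This is the step I expect to be the main obstacle. It requires matching the signs $(-1)^{|\Unknots_\varepsilon|}$ and the counts of top configurations in each annulus with the coefficients of $U_k$ in terms of $[\ell]^j$, which I would do locally in the $\mathbb{A}_{1,1}$ model using the independence of Proposition \ref{prop:indep-config}. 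Granting this, $\langle\widehat{\mathsf{E}}_{\varepsilon_+}\rangle=L_{g_X+g_Y}$, and this element is pointed of degree $g_X+g_Y$. In particular $[\mathsf{E}_{\varepsilon_+}]\neq0$: it contains no contractible arc, since otherwise the top term would vanish. This proves (1) and the first half of (3).

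For the $\Lambda$-formula, compare $q$-powers. On one side, $x^{g_X}*x^{g_Y}=q^{\frac12\Lambda(g_X,g_Y)}x^{g_X+g_Y}$ in the quantum torus at the leading monomial. On the other side, \eqref{eq:decompose-bd-intersection} gives the leading coefficient $q^{a+|\mathsf{X}\cap\mathsf{Y}|}$. The bar-invariant normalizations of $[\mathsf{X}]$, $[\mathsf{Y}]$ and $[\widehat{\mathsf{E}}_{\varepsilon_+}]$ are pointed with coefficient $1$. Equating exponents, and using $|\mathsf{X}\cap\mathsf{Y}|=\mu(\mathsf{X},\mathsf{Y})$ by minimality, gives $\frac12\Lambda=\mu+a$. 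After accounting for the paper's normalization of $a$, this is the claimed $\Lambda(\deg[\mathsf{X}],\deg[\mathsf{Y}])=2\mu+2a$; I would fix the factor by checking two crossing arcs in a quadrilateral.

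Finally, (2) and (4) follow by the bar involution. Applying $\overline{(\cdot)}$ reverses the product order and swaps $q\leftrightarrow q^{-1}$, so the lowest $q$-power term of $[\mathsf{X}][\mathsf{Y}]$ is $[\mathsf{E}_{\varepsilon_-}]$, by the mirror of the Lemma \ref{lem:Muller-C1} count. Alternatively, run the same argument at the seed $\mathbf{s}[1]$, where degree and codegree exchange roles, using Proposition \ref{prop:compati}, bipointedness of $\mathbf{L}$, and additivity of $\mathrm{codeg}$ under multiplication of bipointed elements. This yields $\mathrm{codeg}[\mathsf{E}_{\varepsilon_-}]=\mathrm{codeg}[\mathsf{X}]+\mathrm{codeg}[\mathsf{Y}]$, and in particular $[\mathsf{E}_{\varepsilon_-}]\neq0$.
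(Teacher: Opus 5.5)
Your proposal has a genuine gap: the step you label the main obstacle and then grant is where essentially all of (3) lives. That step is the claim that the $\mathcal{E}_{\mathrm{top}}$-part of the skein expansion assembles into $q^{\bullet}\langle\widehat{\mathsf{E}}_{\varepsilon_+}\rangle$, and your plan for proving it does not work as stated.

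\begin{itemize}
\item In $\mathbb{A}_{m,m}$ every essential simple loop is isotopic to the core $\ell$. So $\mathcal{L}_{\varepsilon_+}$ has at most one element, Proposition \ref{prop:indep-config} splits nothing, and it gives no reduction to $\mathbb{A}_{1,1}$.
\item The matching with the terms of $U_w$ is not a bijection. Two adjacent parallel copies of $\ell$ in $\mathsf{E}_{\varepsilon_+}$ can be joined by $r\geq 2$ crossings of $\mathcal{G}(\mathbb{A}_\ell)$; this is what happens in Example \ref{ex: bead-chain}, where two beads merge into one cyclic chain. Negating any nonempty subset $S$ of these crossings gives $|S|$ beads, and every such $\varepsilon$ lies in $\mathcal{E}_{\mathrm{top}}$. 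Only the signed sum $\sum_{j\geq 1}(-1)^j\binom{r}{j}=-1$ reproduces the Chebyshev coefficient, so a direct proof would need a full inclusion--exclusion analysis, which you have not supplied.
\item Your first paragraph has a smaller hole. You assert that the triangular leading power is the highest $q$-power of $[\mathsf{X}][\mathsf{Y}]$, not just of $[\mathsf{X}]\langle\mathsf{Y}\rangle$. For the lower Chebyshev terms $[\mathsf{X}]\langle\mathsf{Y}_k\rangle$, this requires $\Lambda(\deg[\mathsf{X}],\deg[\ell])\geq 0$, which is a special case of the formula you are trying to prove.
\end{itemize}

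The missing idea is that the Chebyshev assembly is an \emph{output} of the argument, not an input.

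\begin{itemize}
\item By Proposition \ref{prop:band-triangular-mm}, $[\mathsf{X}]\langle\mathsf{Y}\rangle=q^{\alpha_1}[\mathsf{F}_1]U_{w_1}([\ell])+(\text{band elements with strictly lower }q\text{-powers})$, where $\alpha_1=\frac12\Lambda(\deg[\mathsf{X}],\deg[\mathsf{Y}])$.
\item Expand both this and the skein expansion of $[\mathsf{X}]\langle\mathsf{Y}\rangle$ in the shifted multicurve basis, and compare their top $q$-power parts. Uniqueness of the expansion gives \eqref{eq:top-q-deg-sum} for free, and hence $\alpha_1=a+\mu(\mathsf{X},\mathsf{Y})$.
\item $[\mathsf{E}_{\varepsilon_+}]$ enters with coefficient $1$ and has the most parallel copies of $\ell$ among the top terms. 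It must therefore be the leading term $[\mathsf{F}_1][\ell]^{w_1}$, which gives the degree formula and the $\Lambda$-formula together.
\item To handle $\langle\mathsf{Y}\rangle$ versus $[\mathsf{Y}]$, the paper argues on the skein side, not via $\Lambda$:
  \begin{itemize}
  \item If $\mu(\mathsf{X},\ell)>0$, then $\mu(\mathsf{X},\mathsf{Y}_k)<\mu(\mathsf{X},\mathsf{Y})$. So every term of $[\mathsf{X}][\mathsf{Y}_k]$ has $q$-power at most $a+\mu(\mathsf{X},\mathsf{Y}_k)$, which is strictly lower.
  \item If $\mu(\mathsf{X},\ell)=0$, the top terms recombine exactly into $[\mathsf{E}_+]U_w([\ell])$, where $\mathsf{E}_+$ is the all-positive smoothing of $\mathsf{X}$ against the arc part of $\mathsf{Y}$.
  \end{itemize}
\end{itemize}

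Your $\mathbf{s}[1]$ route to (4) needs the mirror of the same identification, so it inherits the gap. The paper instead runs the same comparison using the codegree triangularity of $\mathbf{L}$.

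Parts (1) and (2) are fine. Part (1) needs none of the above: with no negative smoothings, Lemma \ref{lem:Muller-C1} already rules out contractible components.
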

  
  \begin{proof}
  	  (1) By Lemma \ref{lem:Muller-C1}, $[\mathsf{E}_{\varepsilon_+}] \neq 0$ since there are no unknots or contractible arcs in this case.
  	
  	  (3) Let $\ell$ denote the unique simple loop in $\mathbb{A}_{m,m}$ up to homotopy. Since $\mathsf{X}$ is a multicurve consisting of pairwise non-intersecting arcs, we uniquely decompose it as a disjoint union $\mathsf{X} = \mathsf{X}_0 \cup \mathsf{X}_1$, where $\mathsf{X}_0$ consists of peripheral arcs disjoint from $\ell$, and $\mathsf{X}_1$ consists of bridging arcs intersecting $\ell$. Consequently, the intersection number satisfies $\mu(\mathsf{X}, \ell) = \mu(\mathsf{X}_1, \ell) \geq 0$.

 By Proposition \ref{prop:band-triangular-mm}, the product $[\mathsf{X}]\langle\mathsf{Y}\rangle$ admits a $(\prec, \mathbf{m})$-unitriangular decomposition into band elements:
    \begin{equation}\label{eq:decomp1}
  	  	[\mathsf{X}]\langle\mathsf{Y}\rangle = q^{\alpha_1} [\mathsf{F}_1] U_{w_1}([\ell]) + \sum_{j>1} q^{\alpha_j} b_j [\mathsf{F}_j] U_{w_j}([\ell]),
  	  \end{equation}
where $\alpha_1 = \frac{1}{2} \Lambda(\deg [\mathsf{X}],\deg [\mathsf{Y}])$, and $\alpha_j < \alpha_1$ with $b_j \in q^{-1}\mathbb{Z}[q^{-1}]$ for all $j>1$, and the cluster monomials $[\mathsf{F}_j]$ (including $[\mathsf{F}_1]$) commute with their respective Chebyshev polynomials of the second kind $U_{w_j}([\ell])$.
    
On the other hand, we expand the band element directly as $\langle \mathsf{Y} \rangle = [\mathsf{Y}] + \sum_k c_k [\mathsf{Y}_k]$, where $\mathsf{Y} = \Arcs \cup w\ell$, $c_k \in \mathbb{Z}$, and each lower term $\mathsf{Y}_k$ contains a reduced weight $w^{(k)} < w$ of the loop $\ell$. The intersection number for these terms is $\mu(\mathsf{X}, \mathsf{Y}_k) = \mu(\mathsf{X}, \Arcs) + w^{(k)}\mu(\mathsf{X}, \ell)$. We determine the top $q$-degree term of $[\mathsf{X}]\langle \mathsf{Y} \rangle$ by analyzing the skein expansion of each product $[\mathsf{X}][\mathsf{Y}_k]$.

By \eqref{eq:q-deg-inequality} and \eqref{eq:q-deg-top}, the expansion of the product $[\mathsf{X}][\mathsf{Y}]$ with respect to the multicurve basis of $\mathrm{Sk}_q^{\circ}(\mathbb{A}_{m,m})$ is:
      \begin{equation}\label{eq:decompose-XY}
      	[\mathsf{X}][\mathsf{Y}] = q^{{a}+\beta_+} \left( [\mathsf{E}_{\varepsilon_+}] + \sum_{\varepsilon \neq \varepsilon_+,\ \varepsilon\in \mathcal{E}_{\mathrm{top}}}  (-1 - q^{-4})^{|\Unknots_\varepsilon|} [\widehat{\mathsf{E}}_\varepsilon] \right) + \sum_{\varepsilon' \notin \mathcal{E}_{\mathrm{top}}} q^{{a}+\beta_{\varepsilon'}} (-1 - q^{-4})^{|\Unknots_{\varepsilon'}|} \delta_{\varepsilon'} [\widehat{\mathsf{E}}_{\varepsilon'}],
      \end{equation}
      satisfying:
     \begin{itemize}
         \item $\beta_{\varepsilon'}<\beta_{+}=\mu(\mathsf{X},\mathsf{Y})$.
         \item $|\Unknots_\varepsilon| \geq 1$ for $\varepsilon\in \mathcal{E}_{\mathrm{top}}$ when $\varepsilon\neq \varepsilon_+$.
         \item $[\widehat{\mathsf{E}}_\varepsilon], [\widehat{\mathsf{E}}_{\varepsilon'}]$ are multicurves without unknots or contractible arcs.
     \end{itemize}

 \textbf{Case 1: $\mu(\mathsf{X}, \ell) > 0$.}
    The presence of bridging arcs guarantees that the intersection number strictly decreases for the lower Chebyshev terms: $\mu(\mathsf{X}, \mathsf{Y}_k) < \mu(\mathsf{X}, \mathsf{Y})$. Applying \eqref{eq:decompose-XY} to each product $[\mathsf{X}][\mathsf{Y}_k]$, we obtain:
     \begin{equation}\label{eq:decomp2}
      	[\mathsf{X}]\langle \mathsf{Y}\rangle = q^{a+\beta_+} \left( [\mathsf{E}_{\varepsilon_+}] + \sum_{\varepsilon \neq \varepsilon_+,\ \varepsilon\in \mathcal{E}_{\mathrm{top}} } (-1 - q^{-4})^{|\Unknots_\varepsilon|} [\widehat{\mathsf{E}}_\varepsilon] \right) + \sum_{\varepsilon' \notin \mathcal{E}_{\mathrm{top}}} q^{a+\beta_{\varepsilon'}} (-1 - q^{-4})^{|\Unknots_{\varepsilon'}|} \delta_{\varepsilon'} [\widehat{\mathsf{E}}_{\varepsilon'}]+\sum_{h} b_h[\mathsf{E}_h],
      \end{equation}
     where $b_h\in q^{a+\beta_+-1}\mathbb{Z}[q^{-1}]$ and $\mathsf{E}_h$ are the basis elements (multicurves) resulting from the smoothings of $[\mathsf{X}][\mathsf{Y}_k]$.

Note that the right hand side of both expansions is a $\mathbb{Z}_q$-linear combination of the shifted multicurve basis. By comparing the terms of the highest $q$-degree in \eqref{eq:decomp1} and \eqref{eq:decomp2}, the uniqueness of the decomposition implies:
  	\begin{equation}\label{eq:top-q-deg-sum}
q^{\alpha_1} [\mathsf{F}_1] U_{w_1}([\ell])=q^{a+\beta_+} \left( [\mathsf{E}_{\varepsilon_+}] + \sum_{\varepsilon \neq \varepsilon_+,\ \varepsilon \in \mathcal{E}_{\mathrm{top}}} (-1)^{|\Unknots_\varepsilon|} [\widehat{\mathsf{E}}_\varepsilon] \right).
  	\end{equation}
    
Recall that $[\mathsf{E}_{\varepsilon_+}]$ and $[\widehat{\mathsf{E}}_\varepsilon]$ are pointed elements with distinct degrees. Furthermore, the triangular basis element $[\mathsf{F}_1] U_{w_1}([\ell])$ is $(\deg [\mathsf{X}] + \deg [\mathsf{Y}])$-pointed. By comparing the leading degrees on both sides, we deduce that:
  	\begin{itemize}
  		\item $\deg ([\mathsf{E}_{\varepsilon_+}])=\deg ([\mathsf{X}])+\deg ([\mathsf{Y}]).$
  		\item $\alpha_1 = a + \beta_+ = \frac{1}{2}\Lambda(\deg [\mathsf{X}], \deg [\mathsf{Y}])$.
        	\item $[F_1]([\ell])^{w_1}=[\mathsf{E}_{\varepsilon_+}]$.
  		\item For all $\varepsilon \in \mathcal{E}_{\mathrm{top}}$, the multicurves $\mathsf{E}_{\varepsilon_+}$ and $\widehat{\mathsf{E}}_\varepsilon$ share the same essential arc component $[\mathsf{F}_1]$.
  	\end{itemize}
    
    \textbf{Case 2: $\mu(\mathsf{X}, \ell) = 0$.}
    In this case, $\mathsf{X}$ consists entirely of peripheral arcs and is topologically disjoint from $\ell$. The intersection number remains constant across all terms: $\mu(\mathsf{X}, \mathsf{Y}_k) = \mu(\mathsf{X}, \Arcs)$. Because $\mathsf{X}$ and $\ell$ do not intersect, the skein operations on $\mathsf{X} \cdot \mathsf{Y}_k$ occur exclusively between $\mathsf{X}$ and $\Arcs$. The all-positive smoothing of $[\mathsf{X}][\mathsf{Y}_k]$ produces the multicurve $\mathsf{E}_+ \cup w^{(k)}\ell$, where $\mathsf{E}_+$ is the all-positive smoothing of $\mathsf{X} \cdot \Arcs$. The top $q$-degree terms from all lower expansions survive and share the exact same $q$-exponent $a + \mu(\mathsf{X}, \Arcs)$. Collecting these surviving leading terms recombines the Chebyshev polynomial:
    $$ q^{a + \mu(\mathsf{X}, \Arcs)} [\mathsf{E}_+] \left( [\ell]^w + \sum_k c_k [\ell]^{w^{(k)}} \right) = q^{a + \mu(\mathsf{X}, \Arcs)} [\mathsf{E}_+] U_w([\ell]). $$
    Equating this recombined top $q$-degree term with the leading term of the triangular decomposition \eqref{eq:decomp1} gives $[\mathsf{F}_1] = [\mathsf{E}_+]$. By comparing the leading degrees on both sides, we obtain $\deg([\mathsf{E}_{\varepsilon_+}]) = \deg([\mathsf{E}_+ \cup w\ell]) = \deg([\mathsf{X}]) + \deg([\mathsf{Y}])$. Additionally, we have $\alpha_1 = a + \mu(\mathsf{X}, \Arcs) = a + \mu(\mathsf{X}, \mathsf{Y})$.

    In both cases, we establish $\deg([\mathsf{E}_{\varepsilon_+}]) = \deg([\mathsf{X}]) + \deg([\mathsf{Y}])$. Using the formula $\alpha_1 = \frac{1}{2}\Lambda(\deg[\mathsf{X}], \deg[\mathsf{Y}])$ from the triangular decomposition, we conclude that $\Lambda(\deg[\mathsf{X}], \deg[\mathsf{Y}]) = 2\mu(\mathsf{X}, \mathsf{Y}) + 2a$.

The remaining results follow from applying the bar involution $\overline{(\cdot)}$ to the properties established in (1) and (3). 
Recall that the bar involution $\overline{(\cdot)}$ is a ring anti-automorphism of $\mathrm{Sk}_q^{\circ}(\mathbb{A}_{m,m})$ defined by $\overline{q^{1/2}} = q^{-1/2}$ and $\overline{[\mathrm{X}]} = [\overline{\mathrm{X}}]$.
Note that $[\mathsf{X}]$, $[\mathsf{Y}]$, and the band element $\langle\mathsf{Y}\rangle$ are all bar-invariant. 

(2) Applying the bar involution $\overline{(\cdot)}$ to the product $[\mathsf{X}][\mathsf{Y}]$, we obtain:
\[
\overline{[\mathsf{X}][\mathsf{Y}]} = \overline{[\mathsf{Y}]} * \overline{[\mathsf{X}]} = [\mathsf{Y}][\mathsf{X}].
\]
Topologically, the bar involution reverses the crossing data (over/under). Consequently, the all-negative term $[\mathsf{E}_{\varepsilon_-}]$ of the product $[\mathsf{X}][\mathsf{Y}]$ corresponds to the all-positive term $[\mathsf{E}'_{\varepsilon_+}]$ of the reversed product $[\mathsf{Y}][\mathsf{X}]$. 
By Lemma \ref{lem:Muller-C1}, since $[\mathsf{E}'_{\varepsilon_+}] \neq 0$, it follows that $[\mathsf{E}_{\varepsilon_-}] \neq 0$.

(4) Recall that the common triangular basis satisfies triangularity not only with respect to degrees, but also with respect to codegrees; see \cite[Definition 4.2, Theorem 4.10]{qin2020analog}. Therefore, the proof of the corresponding statement for codegrees is formally identical to the proof in (2): one replaces degrees by codegrees throughout and uses the codegree triangularity of the common triangular basis in place of its degree triangularity.  

  \end{proof}

\begin{example}
    Consider the marked surface $\mathbb{A}_{1,1}$ illustrated in Figure~\ref{fig: surface_A11}, with arcs $\mathsf{x}$ (red) and $\mathsf{y}$ (blue) and the superposition $\mathsf{x} \cdot \mathsf{y}$.
    \begin{figure}[H]
        \centering
        \begin{tikzpicture}[scale=1.2]
		  \def\innerR{1}    
		  \def\outerR{2}    
		  \draw[thick, black] (0,0) circle (\innerR);
		  \draw[thick, black] (0,0) circle (\outerR);
		
		  \foreach \angle/\label in {90/1} {
			\node[circle, fill=black, inner sep=1.5pt] (inner-\label) at (\angle:\innerR) {};
			\node[font=\small, black] at (\angle:\innerR-0.3) {\label};
		  }
		
		  \foreach \angle/\label in {90/1} {
			\node[circle, fill=black, inner sep=1.5pt] (outer-\label) at (\angle:\outerR) {};
			\node[font=\small, black] at (\angle:\outerR+0.3) {\label};
		  }
		
		  \draw[thick, red] (inner-1) -- (outer-1);
		
		  \draw[thick, blue, >=Stealth] 
		(90:2) to [out=185, in=90] 
		(-1.8,0) to [out=-90, in=180]
		(0,-1.8) to [out=0, in=-90]
		(1.8,0) to [out=90,in=0] (.1, 1.8);
		
		  \draw[thick, blue, >=Stealth]
		(-.1, 1.8) to [out=180, in=90]
		(-1.6,0) to [out=-90, in=180]
		(0,-1.6) to [out=0, in=-90]
		(1.6,0) to [out=90,in=0] (.1, 1.6);
		
		  \draw[thick, blue, >=Stealth]
		(-.1, 1.6) to [out=180, in=90]
		(-1.4,0) to [out=-90, in=180]
		(0,-1.4) to [out=0, in=-90]
		(1.4,0) to [out=90,in=0] (.1, 1.4);
		
		  \draw[thick, blue, >=Stealth]
		(-.1, 1.4) to [out=180, in=90]
		(-1.2,0) to [out=-90, in=180]
		(0,-1.2) to [out=0, in=-90]
		(1.2,0) to [out=90,in=0] 
		(.2, 1.2) to [out=180, in=90] (90:1);
	    \end{tikzpicture}
        \caption{The marked surface $\mathbb{A}_{1,1}$ with arcs $\mathsf{x}$ and $\mathsf{y}$.}
        \label{fig: surface_A11}
    \end{figure}

   Applying the the choice of smoothings $\varepsilon = (+, +, +)$ along the arc $\mathsf{x}$ (from top to bottom) at the crossings in $\mathsf{x} \cap \mathsf{y}$ yields the configuration in Figure~\ref{fig: +++}. Let $\mathsf{b}_1$ and $\mathsf{b}_2$ denote the red and blue boundary-parallel arcs respectively, and $\ell$ the green homotopic loops, as depicted in the figure.

    \begin{figure}[H]
        \centering
    	\begin{tikzpicture}[scale=1.2]
		\def\innerR{1}    
		\def\outerR{2}    
		\draw[thick, black] (0,0) circle (\innerR);
		\draw[thick, black] (0,0) circle (\outerR);
		
		\foreach \angle/\label in {90/1} {
			\node[circle, fill=black, inner sep=1.5pt] (inner-\label) at (\angle:\innerR) {};
			\node[font=\small, black] at (\angle:\innerR-0.3) {\label};
		}
		
		\foreach \angle/\label in {90/1} {
			\node[circle, fill=black, inner sep=1.5pt] (outer-\label) at (\angle:\outerR) {};
			\node[font=\small, black] at (\angle:\outerR+0.3) {\label};
		}
		
		\draw[thick, blue, >=Stealth]
		(90:2) to [out=-90, in=180]
		(.1, 1.8);
		
		\draw[thick, green, >=Stealth]
		(-.1, 1.8) to [out=0, in=180]
		(.1, 1.6);
		
		\draw[thick, green, >=Stealth]
		(-.1, 1.6) to [out=0, in=180]
		(.1, 1.4);
		
		\draw[thick, red, >=Stealth]
		(-.1, 1.4) to [out=0, in=90]
		(90:1);
		
		\draw[thick, blue, >=Stealth] 
		(90:2) to [out=185, in=90] 
		(-1.8,0) to [out=-90, in=180]
		(0,-1.8) to [out=0, in=-90]
		(1.8,0) to [out=90,in=0] (.1, 1.8);
		
		\draw[thick, green, >=Stealth]
		(-.1, 1.8) to [out=180, in=90]
		(-1.6,0) to [out=-90, in=180]
		(0,-1.6) to [out=0, in=-90]
		(1.6,0) to [out=90,in=0] (.1, 1.6);
		
		\draw[thick, green, >=Stealth]
		(-.1, 1.6) to [out=180, in=90]
		(-1.4,0) to [out=-90, in=180]
		(0,-1.4) to [out=0, in=-90]
		(1.4,0) to [out=90,in=0] (.1, 1.4);
		
		\draw[thick, red, >=Stealth]
		(-.1, 1.4) to [out=180, in=90]
		(-1.2,0) to [out=-90, in=180]
		(0,-1.2) to [out=0, in=-90]
		(1.2,0) to [out=90,in=0] 
		(.2, 1.2) to [out=180, in=90] (90:1);
	\end{tikzpicture}
        \caption{$\varepsilon=\varepsilon_+=(+, +, +)$.}
        \label{fig: +++}
    \end{figure}

Applying the the choice of smoothings $\varepsilon = (+, -, +)$ along the arc $\mathsf{x}$ (from top to bottom) yields  the configuration in Fig.~\ref{fig: +-+}, where the green component is an unknot.

\begin{figure}[H]
        \centering
\begin{tikzpicture}[scale=1.2]
		\def\innerR{1}    
		\def\outerR{2}

		\draw[thick, black] (0,0) circle (\innerR);
		\draw[thick, black] (0,0) circle (\outerR);
		
		\foreach \angle/\label in {90/1} {
			\node[circle, fill=black, inner sep=1.5pt] (inner-\label) at (\angle:\innerR) {};
			\node[font=\small, black] at (\angle:\innerR-0.3) {\label};
		}
		
		\foreach \angle/\label in {90/1} {
			\node[circle, fill=black, inner sep=1.5pt] (outer-\label) at (\angle:\outerR) {};
			\node[font=\small, black] at (\angle:\outerR+0.3) {\label};
		}
		
			\draw[thick, blue, >=Stealth]
		(90:2) to [out=-90, in=180]
		(.1, 1.8);
		
		\draw[thick, green, >=Stealth]
		(-.1, 1.8) to [out=0, in=0]
		(-.1, 1.6);
		
		\draw[thick, green, >=Stealth]
		(.1, 1.6) to [out=180, in=180]
		(.1, 1.4);
		
		\draw[thick, red, >=Stealth]
		(-.1, 1.4) to [out=0, in=90]
		(90:1);

		\draw[thick, blue, >=Stealth] 
		(90:2) to [out=185, in=90] 
		(-1.8,0) to [out=-90, in=180]
		(0,-1.8) to [out=0, in=-90]
		(1.8,0) to [out=90,in=0] (.1, 1.8);
		
		\draw[thick, green, >=Stealth]
		(-.1, 1.8) to [out=180, in=90]
		(-1.6,0) to [out=-90, in=180]
		(0,-1.6) to [out=0, in=-90]
		(1.6,0) to [out=90,in=0] (.1, 1.6);
		
		\draw[thick, green, >=Stealth]
		(-.1, 1.6) to [out=180, in=90]
		(-1.4,0) to [out=-90, in=180]
		(0,-1.4) to [out=0, in=-90]
		(1.4,0) to [out=90,in=0] (.1, 1.4);
		
		\draw[thick, red, >=Stealth]
		(-.1, 1.4) to [out=180, in=90]
		(-1.2,0) to [out=-90, in=180]
		(0,-1.2) to [out=0, in=-90]
		(1.2,0) to [out=90,in=0] 
		(.2, 1.2) to [out=180, in=90] (90:1);
	\end{tikzpicture}
        \caption{$\varepsilon=(+, -, +)$.}
        \label{fig: +-+}
\end{figure}

Applying the the choice of smoothings $\varepsilon = (-, -, -)$ along the arc $\mathsf{x}$ (from top to bottom) yields the configuration in Figure~\ref{fig: ---}. Let $\mathsf{z}$ denote the two homotopic arcs. 
\begin{figure}[H]
        \centering
\begin{tikzpicture}[scale=1.2]
		\def\innerR{1}    
		\def\outerR{2}    
		\draw[thick, black] (0,0) circle (\innerR);
		\draw[thick, black] (0,0) circle (\outerR);
		
		\foreach \angle/\label in {90/1} {
			\node[circle, fill=black, inner sep=1.5pt] (inner-\label) at (\angle:\innerR) {};
			\node[font=\small, black] at (\angle:\innerR-0.3) {\label};
		}
		
		\foreach \angle/\label in {90/1} {
			\node[circle, fill=black, inner sep=1.5pt] (outer-\label) at (\angle:\outerR) {};
			\node[font=\small, black] at (\angle:\outerR+0.3) {\label};
		}
		
			\draw[thick, red, >=Stealth]
		(90:2) to [out=-90, in=0]
		(-.1, 1.8);
		
		\draw[thick, blue, >=Stealth]
		(.1, 1.8) to [out=180, in=0]
		(-.1, 1.6);
		
		\draw[thick, red, >=Stealth]
		(.1, 1.6) to [out=180, in=0]
		(-.1, 1.4);
		
		\draw[thick, blue, >=Stealth]
		(.1, 1.4) to [out=180, in=90]
		(90:1);

		\draw[thick, blue, >=Stealth] 
		(90:2) to [out=185, in=90] 
		(-1.8,0) to [out=-90, in=180]
		(0,-1.8) to [out=0, in=-90]
		(1.8,0) to [out=90,in=0] (.1, 1.8);
		
		\draw[thick, red, >=Stealth]
		(-.1, 1.8) to [out=180, in=90]
		(-1.6,0) to [out=-90, in=180]
		(0,-1.6) to [out=0, in=-90]
		(1.6,0) to [out=90,in=0] (.1, 1.6);
		
		\draw[thick, blue, >=Stealth]
		(-.1, 1.6) to [out=180, in=90]
		(-1.4,0) to [out=-90, in=180]
		(0,-1.4) to [out=0, in=-90]
		(1.4,0) to [out=90,in=0] (.1, 1.4);
		
		\draw[thick, red, >=Stealth]
		(-.1, 1.4) to [out=180, in=90]
		(-1.2,0) to [out=-90, in=180]
		(0,-1.2) to [out=0, in=-90]
		(1.2,0) to [out=90,in=0] 
		(.2, 1.2) to [out=180, in=90] (90:1);
	\end{tikzpicture}
    \caption{$\varepsilon=(-, -, -)$.}
        \label{fig: ---}
\end{figure}

Expanding the product $[\mathsf{x}]\langle \mathsf{y} \rangle$ in $\mathrm{Sk}_q^{\circ}(\mathbb{A}_{1,1})$ according to the skein relations yields:
\begin{align*}
  [\mathsf{x}]\langle \mathsf{y} \rangle = [\mathsf{x}][\mathsf{y}]= q^{-1}[\mathsf{x} \cup \mathsf{y}]
 &=q^2[\mathsf{b}_1][\mathsf{b}_2][\ell]^2 + (-q^2 - q^{-2})[\mathsf{b}_1][\mathsf{b}_2]  + [\mathsf{b}_1][\mathsf{b}_2] +q^{-2}[\mathsf{b}_1][\mathsf{b}_2] + q^{-4}[\mathsf{z}]^2  \\
  &=q^2[\mathsf{b}_1][\mathsf{b}_2]([\ell]^2-1)+((1-2q^{-2})[\mathsf{b}_1][\mathsf{b}_2]+q^{-4}[\mathsf{z}]^2)\\
  &=q^2[\mathsf{b}_1][\mathsf{b}_2]U_2(\ell)+((1-2q^{-2})[\mathsf{b}_1][\mathsf{b}_2]+q^{-4}[\mathsf{z}]^2),
\end{align*}
where $U_2(\ell) = [\ell]^2 - 1$ is the second Chebyshev polynomial of the second kind.
\end{example}
  
  \subsection{General cases}
  Fix an ideal triangulation $\Delta$ of a marked surface $\Sigma$. Let $\mathsf{x}$ be a simple arc such that $[\mathsf{x}]$ is an initial cluster variable in the cluster algebra associated to $\Delta$, and let $\mathsf{Y}$ be a multicurve with $[\mathsf{Y}] \neq 0$ in $\mathrm{Sk}_q^{\circ}(\Sigma)$.

Based on the neighborhood system $\{\mathbb{A}_\ell\}$ from Lemma \ref{lem:regular-neighborhoods}, locality of skein relations allows us to restrict the top $q$-degree terms in the expansion of $[\mathsf{x}] \langle \mathsf{Y} \rangle$ to these regions. In each $\mathbb{A}_\ell$, the configuration is locally equivalent to the $\mathbb{A}_{m,m}$ case, where $m$ is the number of arcs traversing the neighborhood (see Proposition \ref{prop:leading-g}). Summing terms that are identical outside the neighborhood recovers a band element $\mathrm{Band}^{w_\ell}(\ell)$. Iterating this localization, the leading term of the global expansion is
$[\mathsf{F}] \prod_{\ell} \mathrm{Band}^{w_\ell}(\ell),$
where $\mathsf{F}$ is the remaining arc component of $\mathsf{E}_+$. Since all other terms have strictly lower $q$-degrees, this product is the leading term of the triangular basis.
  
  \begin{prop}\label{prop:initial-triangular}
  	In the skein algebra $\mathrm{Sk}_q^{\circ}(\Sigma)$, the product $[\mathsf{x}]\langle \mathsf{Y}\rangle$ admits a finite $(\prec,\mathbf{m})$-unitriangular decomposition of band basis elements.
  \end{prop}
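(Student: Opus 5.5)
We prove the statement by expanding $[\mathsf{x}]\langle\mathsf{Y}\rangle$ in the shifted multicurve basis, reorganizing it in the band basis, and isolating the top $q$-degree part. Write $\mathsf{Y}=\Arcs\cup\bigcup_\ell w_\ell\ell$. Since $\langle\mathsf{Y}\rangle=q^{\lambda/2}[\Arcs]\prod_\ell U_{w_\ell}([\ell])$, expanding the Chebyshev polynomials gives
\[
\langle\mathsf{Y}\rangle=[\mathsf{Y}]+\sum_k c_k[\mathsf{Y}_k],
\]
where each $\mathsf{Y}_k$ has strictly smaller loop weights. Applying \eqref{eq:detail-dec} to $[\mathsf{x}][\mathsf{Y}]$ and to each $[\mathsf{x}][\mathsf{Y}_k]$ expresses $[\mathsf{x}]\langle\mathsf{Y}\rangle$ as a finite $\mathbb{Z}_q$-combination of simple multicurves. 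By Lemma~\ref{lem:bangle-bases} and Lemma~\ref{lem:band-bases}, it can therefore be rewritten as a finite $\mathbb{Z}_q$-combination of band elements. Bar-invariance of $[\mathsf{x}]$ and $\langle\mathsf{Y}\rangle$, together with $\overline{[\mathsf{x}]\langle\mathsf{Y}\rangle}=\langle\mathsf{Y}\rangle[\mathsf{x}]$ and the quasi-commutation up to the factor $q^{\Lambda(\deg[\mathsf{x}],\deg\langle\mathsf{Y}\rangle)}$ in leading terms, shows that after normalizing by $q^{-\frac12\Lambda(\deg[\mathsf{x}],\deg[\mathsf{Y}])}$ it remains to prove two things:
\begin{itemize}
\item[(i)] the top $q$-degree part is exactly one band element $\langle\mathsf{E}\rangle$, of degree $\deg[\mathsf{x}]+\deg[\mathsf{Y}]$;
\item[(ii)] every other band element appears with a coefficient in $q^{-1/2}\mathbb{Z}[q^{-1/2}]$ and has degree $\prec\deg[\mathsf{x}]+\deg[\mathsf{Y}]$.
\end{itemize}

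For (i), inequality \eqref{eq:q-deg-inequality} bounds the $q$-exponent of every term of $[\mathsf{x}][\mathsf{Y}]$ by $a+\mu(\mathsf{x},\mathsf{Y})$. Equality holds exactly for $\varepsilon\in\mathcal{E}_{\mathrm{top}}$, where each unknot factor $-q^2-q^{-2}$ contributes its leading coefficient $-1$. For each lower term $\mathsf{Y}_k$, we split into two cases according to whether $\ell$ meets $\mathsf{x}$.
\begin{itemize}
\item If $\mu(\mathsf{x},\ell)>0$, then $\mu(\mathsf{x},\mathsf{Y}_k)<\mu(\mathsf{x},\mathsf{Y})$, so these terms drop strictly below the top $q$-degree.
\item If $\mu(\mathsf{x},\ell)=0$, the factors $U_{w_\ell}([\ell])$ commute past the whole computation and survive intact.
\end{itemize}
This is the same dichotomy as Cases 1 and 2 of Proposition~\ref{prop:leading-g}.

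The remaining task is to identify
\[
\sum_{\varepsilon\in\mathcal{E}_{\mathrm{top}}}(-1)^{|\Unknots_\varepsilon|}[\widehat{\mathsf{E}}_\varepsilon]
\]
with $q^{\ast}[\mathsf{F}]\prod_\ell \mathrm{Band}^{w'_\ell}(\ell)$, where $\mathsf{E}_{\varepsilon_+}=\mathsf{F}\cup\bigcup_\ell w'_\ell\ell$. By Lemma~\ref{lem:regular-neighborhoods} and Lemma~\ref{lem:null-loop-cyclic}, all chains live in the annuli $\mathbb{A}_\ell$. By Proposition~\ref{prop:indep-config}(2), $\mathcal{E}_{\mathrm{top}}$ is the product over $\ell$ of the local sets $\mathcal{E}_{\mathrm{top}}(\mathbb{A}_\ell)$. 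The signed sum therefore factors as $[\mathsf{F}]$ times a product of local sums, one for each $\ell$. Each local sum is computed inside a model $\mathbb{A}_{m,m}$, where $m$ is the number of arcs of $\mathsf{x}\cup\mathsf{Y}$ traversing $\mathbb{A}_\ell$. Equation \eqref{eq:top-q-deg-sum} in the proof of Proposition~\ref{prop:leading-g}, combined with Proposition~\ref{prop:band-triangular-mm}, identifies each local sum with $\mathrm{Band}^{w'_\ell}(\ell)$ times the local arc part. Proposition~\ref{prop:leading-g}(3), applied in each $\mathbb{A}_\ell$, gives the degree $\deg[\mathsf{x}]+\deg[\mathsf{Y}]$.

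\textbf{Main obstacle: the transfer from $\mathbb{A}_{m,m}$ to $\Sigma$.} I would handle this by a gluing map. Take the inclusion of a thickening of $\mathbb{A}_\ell$ into $\Sigma$ and check three compatibilities:
\begin{itemize}
\item smoothings and the unknot counts $\Unknots_\varepsilon$ are local, so the inclusion is compatible with \eqref{eq:detail-dec} on the top terms;
\item the boundary arcs of the local model map to arcs of $\mathsf{F}$;
\item the remaining arcs outside $\bigcup\mathbb{A}_\ell$ are fixed, since all crossings were isotoped into $\bigcup\mathcal{D}_c$.
\end{itemize}
Independence across different $\ell$ (Proposition~\ref{prop:indep-config}) then lets the local identities multiply.

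For (ii), each lower band element arising from the expansion has a strictly smaller $q$-power. Converting bangles to bands is unitriangular with integer coefficients and preserves $q$-degree, so these coefficients lie in $q^{-1/2}\mathbb{Z}[q^{-1/2}]$ relative to the leading term. For the degree claim, each lower term comes from a smoothing $\varepsilon\neq\varepsilon_+$ or from some $\mathsf{Y}_k$, and its degree is $\prec$ that of the leading term. I would justify this by noting that the product $[\mathsf{x}]\langle\mathsf{Y}\rangle$ is $(\deg[\mathsf{x}]+\deg[\mathsf{Y}])$-pointed in the quantum torus of $\Delta$, since $[\mathsf{x}]$ is an initial variable and $\langle\mathsf{Y}\rangle$ is pointed. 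Every band element is pointed with a distinct degree, so the uniqueness of pointed expansions forces all remaining degrees to be strictly $\prec$ the leading one. Finiteness is automatic, because only finitely many smoothings occur.
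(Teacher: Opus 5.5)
\textbf{Gap: loops of $\mathsf{Y}$ disjoint from $\mathsf{x}$.} Your route is the paper's. You split off the lower Chebyshev terms, bound $q$-exponents by \eqref{eq:q-deg-inequality}, localize the $\mathcal{E}_{\mathrm{top}}$ terms to the annuli via Lemmas \ref{lem:regular-neighborhoods}, \ref{lem:null-loop-cyclic} and Proposition \ref{prop:indep-config}, and import \eqref{eq:top-q-deg-sum} from $\mathbb{A}_{m,m}$.

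That the factors $U_{w_h}([\ell_h])$ of the disjoint loops \emph{survive intact} is not enough. You need $[\mathsf{F}]\prod_\ell\mathrm{Band}^{w'_\ell}(\ell)\cdot\prod_h U_{w_h}([\ell_h])$ to be a single band element. Suppose a loop produced by the all-positive smoothing of $\mathsf{x}$ with the part of $\mathsf{Y}$ meeting $\mathsf{x}$ were homotopic to some $\ell_h$. Then $\mathrm{Band}^{w'}(\ell_h)\,U_{w_h}([\ell_h])=\sum_{j=0}^{\min(w',w_h)}U_{w'+w_h-2j}([\ell_h])$ would place several band elements at the top $q$-power, and both your claim (i) and unitriangularity would fail.

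The paper rules this out with Lemma \ref{lem:leading-term-no-new-loop}: loops of $\mathsf{E}_{\varepsilon_+}$ cross the tubular neighborhood of $\mathsf{x}$ from side to side, cannot form empty bigons with $\mathsf{x}$, and so meet $\mathsf{x}$ essentially. The paper then proves the statement in two steps. First it treats the case where every loop of $\mathsf{Y}$ meets $\mathsf{x}$. Then it multiplies by $\langle\Loops'\rangle$, using $\langle\mathsf{Z}_0\rangle\langle\Loops'\rangle=\langle\mathsf{Z}_0\cup\Loops'\rangle$ for the leading term and the Chebyshev product rule (integer coefficients) for the other terms. You need to add this step.

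\textbf{Degree of the leading term.} Your pointedness argument for (ii) is correct, and more explicit than the paper, whose proof tracks only $q$-powers. Write $g=\deg[\mathsf{x}]+\deg[\mathsf{Y}]$. Since $[\mathsf{x}]\langle\mathsf{Y}\rangle$ is $q^{\frac12\Lambda(\deg[\mathsf{x}],\deg[\mathsf{Y}])}$ times a $g$-pointed element, and bands are pointed at distinct degrees, the band of degree $g$ occurs with exactly that coefficient and every other band has degree $\prec g$.

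However, Proposition \ref{prop:leading-g}(3) \emph{applied in each $\mathbb{A}_\ell$} does not show that your top-$q$ band element has degree $g$. That proposition computes degrees in a seed of the annulus, not with respect to $\Delta$. Given your argument, what is still missing is equivalent to the identity $\tfrac12\Lambda(\deg[\mathsf{x}],\deg[\mathsf{Y}])=a+\mu(\mathsf{x},\mathsf{Y})$, and this needs its own justification. The paper does not supply it inside this proof either; it appears only afterwards, in Theorem \ref{thm:leading-g}.
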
 
  
  \begin{proof} 
 (i) Let $\mathsf{Y}=\Arcs\cup \Loops=(\bigcup_i w_i \gamma_i)\cup (\bigcup_j w_j \ell_j)$, where $\gamma_i$ and $\ell_j$ are non-homotopic arc and loop components, respectively. We assume $\mathsf{x}\cap \ell_j\neq \emptyset$ for all $\ell_j$. The band element $\langle \mathsf{Y} \rangle$ expands as $\langle \mathsf{Y}\rangle=[\mathsf{Y}]+\sum_k b_k [\mathsf{Y}_k]$, where $\mathsf{Y}_k=(\bigcup_i w_i \gamma_i)\cup (\bigcup_j w^{(k)}_j \ell_j)$ with $w^{(k)}_j < w_j$ and $b_k\in \mathbb{Z}$. This implies $\mu(\mathsf{x},\mathsf{Y_k})<\mu(\mathsf{x},\mathsf{Y})$.

  	The annulus case $\mathbb{A}_{m,m}$ establishes the local model. For each annular neighborhood $\mathbb{A}_{\ell}$, the restriction $(\mathsf{x}\cdot \mathsf{Y})|_{\mathbb{A}_{\ell}}$ resolves to the following multicurves:
  	\begin{itemize}
  		\item \textbf{Leading multicurve term}:
  		$$q^{\beta_+}[\mathsf{E}_{\varepsilon_+}(\mathbb{A}_{\ell})]=q^{\beta_+}[\mathsf{F}_\ell][\ell]^{w_+(\ell)}.$$ 
  		where $\mathsf{F}_\ell$ is the local arc component in $\mathbb{A}_{\ell}$ disjoint from $\ell$.
  		\item \textbf{Other top $q$-degree terms}: 
  		$$0\neq q^{\beta_\varepsilon}[\mathsf{E}_\varepsilon (\mathbb{A}_{\ell})]=q^{\beta_{+}}(-1-q^{-4})^{|\Unknots_\varepsilon|}[\mathsf{F}_\ell][\ell]^{w_\varepsilon},\ \varepsilon\in \mathcal{E}_{\mathrm{top}}, $$ 
  		which differ from the leading term only by the choice of smoothings for crossings in $\mathbb{A}_\ell$ that form localized unknots.
  		\item \textbf{Lower $q$-degree terms}: Those not contributing to the top $q$-degree term.
  	\end{itemize}

    By summing the top $q$-degree terms, we obtain (see \eqref{eq:top-q-deg-sum}):
    \[
  		q^{\beta_+} [\mathsf{F}_\ell] \left( [\ell]^{w_+(\ell)} + \sum_{\varepsilon\neq \varepsilon_+ \in \mathcal{E}_{\mathrm{top}}} (-1)^{|\Unknots_\varepsilon|} [\ell]^{w_\varepsilon} \right) = q^{\beta_+} [\mathsf{F}_\ell] \mathrm{Band}^{w_+(\ell)}(\ell)
    \]
  where the equality follows from the unitriangularity of the basis (Proposition \ref{prop:band-triangular}).
    
 We aim to prove that $[\mathsf{x}]\langle\mathsf{Y}\rangle$ admits a $(\prec,\mm)$-unitriangular decomposition into band elements: 
 \begin{align}\label{eq:aim-decomposition}
 [\mathsf{x}]\langle \mathsf{Y}\rangle = q^\gamma \langle \mathsf{Z}_0\rangle +\sum_{s>0} b_s \langle \mathsf{Z}_s \rangle,\quad b_s\in q^{\gamma-1}\mathbb{Z}[q^{-1}].  
 \end{align}
The leading basis element $\langle \mathsf{Z}_0 \rangle$ is identified as the product $[\mathsf{F}]\prod_{\ell}\mathrm{Band}^{w_+(\ell)}(\ell)$, where $\mathsf{F}$ is the multicurve component disjoint from the loops $\{\ell\}$.

By Lemma \ref{lem:null-loop-cyclic}, each unknot $\mathsf{n} \in \Unknots_{\mathrm{top}}$ (contributing to the top $q$-degree term) is contained in some annular neighborhood $\mathbb{A}_\ell$. These unknots are formed by smoothings at crossings within their respective neighborhood $\mathbb{A}_\ell$. By Proposition \ref{prop:indep-config} (independent generation), the global sum of top $q$-degree terms factors into the product of local summations over each $\mathbb{A}_\ell$.
    
    Combining these independent local configurations, the highest $q$-degree component of the expansion of $[\mathsf{x}][\mathsf{Y}]$ (in the shifted multicurve basis) factors into $q^{\gamma}[\mathsf{F}]\prod_{\ell}\mathrm{Band}^{w_+(\ell)}(\ell)$. Furthermore, since $\mu(\mathsf{x}, \mathsf{Y}_k) < \mu(\mathsf{x}, \mathsf{Y})$, the $q$-degrees of all terms arising from $[\mathsf{x}][\mathsf{Y}_k]$ are strictly lower than $\gamma$. It follows that the leading term in the triangular decomposition is $q^{\gamma}[\mathsf{F}]\prod_{\ell}\mathrm{Band}^{w_+(\ell)}(\ell)$ as desired.

We have obtained a $(\prec,\mm)$-unitriangular decomposition
\begin{align*}
[\mathsf{x}]\langle \mathsf{Y}\rangle= q^\gamma \langle \mathsf{Z}_0\rangle +\sum_{s>0} b'_s [\mathsf{Z'}_s] ,\quad b'_s\in q^{\gamma-1}\mathbb{Z}[q^{-1}],  
 \end{align*}
where $\langle \mathsf{Z}_0 \rangle$ is the band element $[\mathsf{F}]\prod_{\ell}\mathrm{Band}^{w_+(\ell)}(\ell)$. Note that each multicurve $[\mathsf{Z'}_s]$ has a $(\prec,\mathbb{Z})$-unitriangular decomposition $\langle \mathsf{Z'}_s\rangle+c_j\sum_j \langle \mathsf{Z'}_{s,j}\rangle$, where $c_j\in \mathbb{Z}$ are the coefficients when we decompose monomials into Chebyshev polynomials. We deduce the desired decomposition \eqref{eq:aim-decomposition}.

    (ii) Let $\mathsf{Y}' = \mathsf{Y} \cup \Loops'$, where $\mathsf{Y}$ satisfies the intersection conditions of (i) and $\Loops' = \bigcup_h w_h\ell_h$ consists of non-intersecting simple loops that are disjoint from $\mathsf{x}$. By (i), the product $[\mathsf{x}]\langle\mathsf{Y}\rangle$ admits an $(\prec, \mathbf{m})$-unitriangular decomposition into band elements:
    $$[\mathsf{x}]\langle \mathsf{Y}\rangle = q^\gamma \langle \mathsf{Z}_0\rangle + \sum_{s>0} b_s \langle \mathsf{Z}_s \rangle, \quad b_s \in q^{\gamma-1}\mathbb{Z}[q^{-1}].$$

By Lemma \ref{lem:leading-term-no-new-loop}, the multicurve $\mathsf{Z}_0$ contains no components isotopic to any loop in $\Loops'$. Since $\mathsf{Z}_0 \cup \Loops'$ is a multicurve, we have $\langle \mathsf{Z}_0 \rangle \langle \Loops' \rangle = \langle \mathsf{Z}_0 \cup \Loops' \rangle$. For $s\geq 0$, since the multicurve $\mathsf{Z}_s$ arises from smoothings of $\mathsf{x} \cdot \mathsf{Y}$ or $\mathsf{x} \cdot \mathsf{Y}_k$, it remains disjoint from $\Loops'=\bigcup_h w_h\ell_h$, although it might have components of the form $w \ell_h$ for some $w\in \mathbb{N}$. Note that $\langle w \ell_h\rangle\langle w_h \ell_h\rangle$ is a $\mathbb{Z}$-linear combination of bands $\langle w'\ell_h\rangle $ for $w'\leq w+w_h$. We deduce that the product $\langle \mathsf{Z}_s \rangle \langle \Loops' \rangle$ is a $\mathbb{Z}$-linear combination of band elements, denoted $\langle W_j\rangle$. We thus have:
\begin{align*}
[\mathsf{x}]\langle\mathsf{Y'}\rangle &= [\mathsf{x}]\langle \mathsf{Y}\rangle \langle \Loops'\rangle \\
&= q^\gamma \langle \mathsf{Z}_0 \cup \Loops' \rangle + \sum_{s>0} b_s \langle \mathsf{Z}_s \rangle \langle \Loops' \rangle \\
&= q^\gamma \langle \mathsf{Z}_0 \cup \Loops' \rangle + \sum_{j \in J} c_j \langle \mathsf{W}_j \rangle,
\end{align*}
where each $c_j \in q^{\gamma-1}\mathbb{Z}[q^{-1}]$, and $\langle \mathsf{W}_j \rangle$ are band elements. This shows that $\langle \mathsf{Z}_0 \cup \Loops' \rangle$ is the unique leading term with coefficient $q^\gamma$. Since the coefficients of all other terms belong to $q^{\gamma-1}\mathbb{Z}[q^{-1}]$, the $(\prec, \mathbf{m})$-unitriangularity is preserved, completing the proof.

  \end{proof}
  
  \begin{thm}\label{thm:band-is-triangular}
  	For unpunctured surfaces, the band basis coincides with the common triangular basis.
  \end{thm}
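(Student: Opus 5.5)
We plan to verify directly that the band basis $\mathrm{Band}^\circ_q(\Sigma)$ satisfies the defining properties of a triangular basis $\mathbf{L}^{\mathbf{s}}$ at every seed $\mathbf{s}\in\Delta^+$. By Proposition \ref{prop:inclusion}, every seed of $\mathrm{Sk}_q^\circ(\Sigma)$ is of the form $\mathbf{s}_\Delta$ for an ideal triangulation $\Delta$. Hence it suffices to fix an arbitrary $\Delta$ and check the four axioms there.

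\textbf{Basic axioms.} We first dispose of the easy axioms. Lemma \ref{lem:band-bases} says the band elements form a $\mathbb{Z}_q$-basis of $\mathrm{Sk}^\circ_q(\Sigma)$. Bar-invariance holds by construction, through the normalizing factor $q^{\lambda/2}$ in Definition \ref{def:band-element}. Cluster monomials are bands with no loop components. Those of $\mathbf{s}_\Delta$ and of $\mathbf{s}_\Delta[1]$ are therefore bands, the latter because the cluster variables of $\mathbf{s}[1]$ are again arcs.

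\textbf{Pointedness.} Each band $\langle \mathsf{X}\rangle$ should be $g$-pointed with $g=\deg[\mathsf{X}]$. The bangle $[\mathsf{X}]$ is pointed, being compatibly pointed as used in the proof of Proposition \ref{prop:band-triangular-mm}. The Chebyshev corrections $U_w([\ell])-[\ell]^w$ are combinations of $[\ell]^{w'}$ with $w'<w$, and $\deg [\ell]^{w'}=w'\deg[\ell]$. We must check that these lower terms do not spoil pointedness: the degrees $w'\deg[\ell]$ must be $\prec_{\mathbf{s}}$-lower than $w\deg[\ell]$, or at least the degree map must be injective on bands. We also need the degree map $\mathsf{X}\mapsto \deg[\mathsf{X}]$ to be a bijection onto $\mathfrak{M}(\mathbf{s})$, which follows from the shear-coordinate parametrization of laminations.

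\textbf{Degree triangularity.} This is the substantive step, and Proposition \ref{prop:initial-triangular} is built for it. For the initial cluster variable $x_i(\mathbf{s}_\Delta)=[\mathsf{x}]$ with $\mathsf{x}\in\Delta$, that proposition gives
\[
[\mathsf{x}]\langle\mathsf{Y}\rangle=q^{\gamma}\langle \mathsf{Z}_0\rangle+\sum_{s>0}b_s\langle\mathsf{Z}_s\rangle,\qquad b_s\in q^{\gamma-1}\mathbb{Z}[q^{-1}].
\]
Multiplying by $q^{-\gamma}$ puts this in the required form. It remains to identify $\deg\langle\mathsf{Z}_0\rangle=\deg[\mathsf{x}]+\deg[\mathsf{Y}]$ and to show that each $\deg\langle \mathsf{Z}_s\rangle \prec_{\mathbf{s}} \deg[\mathsf{x}]+\deg[\mathsf{Y}]$.

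The first identification is the global analogue of Proposition \ref{prop:leading-g}(3). We would prove it by localizing to the annuli $\mathbb{A}_\ell$ of Lemma \ref{lem:regular-neighborhoods}. Outside these annuli, additivity of degree under the all-positive smoothing follows from compatibility of degree with products of pointed elements: $\deg([\mathsf{x}][\mathsf{Y}])=\deg[\mathsf{x}]+\deg[\mathsf{Y}]$, and every other smoothing term has strictly lower degree.

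The second claim uses that the product $[\mathsf{x}]\langle\mathsf{Y}\rangle$ is itself pointed at $\deg[\mathsf{x}]+\deg[\mathsf{Y}]$. Every remaining band in the expansion is then pointed at a degree with nonzero coefficient in this Laurent polynomial. Any such degree other than the leading one is $\prec_{\mathbf{s}}$-lower, except possibly for cancellations between terms. These we control by induction on $\prec_{\mathbf{s}}$, as in the standard argument that a unitriangular expansion of a pointed element has dominated lower degrees.

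\textbf{Main obstacle.} We expect the main difficulty to be this degree bookkeeping: showing that all the $\langle\mathsf{Z}_s\rangle$ with coefficients in $q^{\gamma-1}\mathbb{Z}[q^{-1}]$ have degrees strictly dominated by the leading one. The $q$-degree ordering in Proposition \ref{prop:initial-triangular} is a priori unrelated to $\prec_{\mathbf{s}}$. If the Laurent-expansion argument fails, our fallback is the one used in the annulus case. We would invoke \cite[Theorem 3.17]{qin2023analogs}, or the existence criterion of \cite{qin2017triangular}. That criterion says that a bar-invariant pointed basis which is triangular for initial cluster variables at every seed, and which contains the cluster monomials, is the common triangular basis. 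It reduces everything to the $q$-degree unitriangularity we already have, together with the compatible pointedness of bands, which follows from Proposition \ref{prop:compati} applied to bangles.
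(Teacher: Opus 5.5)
Your overall route is the paper's. The paper proves Theorem~\ref{thm:band-is-triangular} in one line: Proposition~\ref{prop:initial-triangular} gives, for every triangulation $\Delta$ and every $\mathsf{x}\in\Delta$, a $(\prec,\mathbf{m})$-unitriangular expansion of $[\mathsf{x}]\langle\mathsf{Y}\rangle$ in bands, so the band basis is triangular at every seed. Your treatment of the basis, bar-invariance and cluster-monomial axioms is fine. (The fact that every seed is some $\mathbf{s}_\Delta$ comes from flip/mutation compatibility, since every interior arc of an unpunctured surface is flippable, not from Proposition~\ref{prop:inclusion}.)

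The gap is in the step you yourself isolate. Once you read Proposition~\ref{prop:initial-triangular} as only a $q$-degree statement, you must show that the unique band $\langle\mathsf{Z}_0\rangle$ carrying the top coefficient $q^{\gamma}$ is the band of degree $G:=\deg[\mathsf{x}]+\deg\langle\mathsf{Y}\rangle$. Equivalently, $\deg[\mathsf{E}_{\varepsilon_+}]=G$ and $\gamma=\tfrac12\Lambda(\deg[\mathsf{x}],\deg\langle\mathsf{Y}\rangle)$. Pointedness of the product plus the standard induction only gives two things: the band of degree $G$ occurs with coefficient $q^{\frac12\Lambda(\deg[\mathsf{x}],\deg\langle\mathsf{Y}\rangle)}$, and all other bands are $\prec_{\mathbf{s}}$-lower. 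It does not exclude that this band is some $\langle\mathsf{Z}_s\rangle$ with $s>0$, so that $\tfrac12\Lambda(\deg[\mathsf{x}],\deg\langle\mathsf{Y}\rangle)\le\gamma-1$, while $\langle\mathsf{Z}_0\rangle$ lies strictly below $G$. In that case degree triangularity fails. Your claim that \emph{every other smoothing term has strictly lower degree} is exactly this missing statement, asserted rather than proved. Additivity of $\deg$ for the product says nothing about which smoothing term carries the top degree. Bar-invariance cannot decide it either, since $x_i\ast L_g$ is not bar-invariant.

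The fallback does not close this. The criterion you quote already presupposes triangularity with respect to initial cluster variables at every seed, dominance part included, so invoking it is circular. Also, \cite[Theorem 3.17]{qin2023analogs} is a freezing/restriction statement; it does not upgrade $q$-unitriangularity to $\prec_{\mathbf{s}}$-unitriangularity.

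In the paper the missing identification is imported from the annulus. For $\mathbb{A}_{m,m}$ the common triangular basis exists independently (acyclic seed) and equals the band basis (Proposition~\ref{prop:band-triangular-mm}, via the Chebyshev recursion in $\mathbb{A}_{1,1}$ and freezing). Comparing \eqref{eq:decomp1} with the $q$-top part of the skein expansion then forces $\alpha_1=a+\beta_+$ and $\deg[\mathsf{E}_{\varepsilon_+}]=\deg[\mathsf{X}]+\deg[\mathsf{Y}]$ (Proposition~\ref{prop:leading-g}(3)). Proposition~\ref{prop:initial-triangular} transports this to $\Sigma$ through the neighbourhoods $\mathbb{A}_\ell$, using Lemma~\ref{lem:null-loop-cyclic} and Proposition~\ref{prop:indep-config}.

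So you have two options. Take Proposition~\ref{prop:initial-triangular} at face value as a $(\prec,\mathbf{m})$-unitriangular decomposition, as the paper does; then your outline reduces to the paper's proof. Or supply an actual argument for $\deg[\mathsf{E}_{\varepsilon_+}]=\deg[\mathsf{x}]+\deg[\mathsf{Y}]$. As written, neither your primary argument nor the fallback does this.
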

  \begin{proof}
  	Proposition \ref{prop:initial-triangular} establishes that the band basis is triangular basis relative to any triangulation $\Delta$. Since the triangulation $\Delta$ is arbitrary, the result follows.
  \end{proof}
  
  By Theorem \ref{thm:band-is-triangular}, all conclusions of Proposition \ref{prop:leading-g} generalize to arbitrary unpunctured surfaces. 
  \begin{thm}\label{thm:leading-g}
  	Let $\Sigma$ be a triangulable marked surface (unpunctured). Consider multicurves $\mathsf{X}, \mathsf{Y}$ such that $[\mathsf{X}]$ is a nonzero cluster monomial, $[\mathsf{Y}] \neq 0$, and their superposition $\mathsf{X} \cdot \mathsf{Y}$ is transverse with minimal crossings. Then:
    \begin{enumerate}
  		\item $[\mathsf{E}_{\varepsilon_+}]\neq 0$. 
  		\item $[\mathsf{E}_{\varepsilon_-}]\neq 0$.
  		\item For any initial seed where $[\mathsf{X}]$ is an initial cluster monomial:
  		$$\deg ([\mathsf{E}_{\varepsilon_+}])=\deg ([\mathsf{X}])+\deg ([\mathsf{Y}]).$$ 
  		Moreover, $$\Lambda(\deg ([\mathsf{X}]),\deg ([\mathsf{Y}]))=2\mu(\mathsf{X}, \mathsf{Y})+2a,$$
  		where $\mu(\mathsf{X}, \mathsf{Y})$ is the minimal intersection number and $a$ is the boundary intersection number.
  		\item $\mathrm{codeg}([\mathsf{E}_{\varepsilon_-}])=\mathrm{codeg} ([\mathsf{X}])+\mathrm{codeg} ([\mathsf{Y}]).$
  	\end{enumerate}
  \end{thm}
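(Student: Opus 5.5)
The plan is to run the proof of Proposition~\ref{prop:leading-g} again, word for word, on an arbitrary unpunctured $\Sigma$. The only annulus-specific inputs there were Proposition~\ref{prop:band-triangular-mm}, which says the band basis is the common triangular basis, and the resulting unitriangular decomposition \eqref{eq:decomp1}. Both are now supplied globally by Theorem~\ref{thm:band-is-triangular} and Proposition~\ref{prop:initial-triangular}.

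For (1), Lemma~\ref{lem:Muller-C1} shows that $\mathsf{E}_{\varepsilon_+}$ has no unknots or contractible arcs, since it has no negative smoothings; hence $[\mathsf{E}_{\varepsilon_+}]\neq 0$. For (2), apply the bar involution. It reverses the crossing data, so $\overline{[\mathsf{X}][\mathsf{Y}]}=[\mathsf{Y}][\mathsf{X}]$, and the all-negative resolution of $\mathsf{X}\cdot\mathsf{Y}$ becomes the all-positive resolution of $\mathsf{Y}\cdot\mathsf{X}$. Lemma~\ref{lem:Muller-C1} then applies again.

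For (3), choose a seed $\mathbf{s}=\mathbf{s}_\Delta$ in which $[\mathsf{X}]$ is an initial monomial. By Theorem~\ref{thm:band-is-triangular} and the degree triangularity of the common triangular basis (extended from single $x_i$ to monomials by iteration, or directly since $[\mathsf{X}]\in\mathbf{L}$ is a product of commuting initial variables), we have
\[
[\mathsf{X}]\langle\mathsf{Y}\rangle = q^{\alpha_1}\langle\mathsf{Z}_0\rangle+\sum_{s>0} b_s\langle\mathsf{Z}_s\rangle ,
\]
with $b_s\in q^{\alpha_1-1}\mathbb{Z}[q^{-1}]$, $\alpha_1=\tfrac12\Lambda(\deg[\mathsf{X}],\deg[\mathsf{Y}])$, and $\langle\mathsf{Z}_0\rangle$ being $(\deg[\mathsf{X}]+\deg[\mathsf{Y}])$-pointed.

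Next, expand $\langle\mathsf{Y}\rangle=[\mathsf{Y}]+\sum_k c_k[\mathsf{Y}_k]$ and resolve each $[\mathsf{X}][\mathsf{Y}_k]$ through \eqref{eq:detail-dec}. Split the loops of $\mathsf{Y}$ into those meeting $\mathsf{X}$ and those disjoint from it, exactly as in Cases~1 and~2 of Proposition~\ref{prop:leading-g}. For loops meeting $\mathsf{X}$, the intersection number drops in lower Chebyshev terms. For disjoint loops, the top terms recombine into $U_w([\ell])$. By Proposition~\ref{prop:indep-config} and Lemma~\ref{lem:null-loop-cyclic}, the top $q$-degree part of the multicurve expansion is $q^{a+\mu(\mathsf{X},\mathsf{Y})}$ times a sum whose leading multicurve is $[\mathsf{E}_{\varepsilon_+}]$. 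This is the local computation from the proof of Proposition~\ref{prop:initial-triangular}, carried out for an arc multicurve $\mathsf{X}$ instead of one arc. Comparing with the band expansion in the shifted multicurve basis gives $\alpha_1=a+\mu(\mathsf{X},\mathsf{Y})$. Taking $\prec_{\mathbf{s}}$-leading degrees of the top part then gives $\deg[\mathsf{E}_{\varepsilon_+}]=\deg[\mathsf{X}]+\deg[\mathsf{Y}]$, and together these give the $\Lambda$ identity.

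Statement (4) follows the same way, using codegree triangularity of the common triangular basis \cite{qin2020analog} and the all-negative resolution $\mathsf{E}_{\varepsilon_-}$, which carries the lowest $q$-power; alternatively it follows by the bar-involution symmetry from (3) applied to $[\mathsf{Y}][\mathsf{X}]$.

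The main obstacle is passing from a single initial arc $\mathsf{x}$, which is what the bead-chain analysis and Proposition~\ref{prop:initial-triangular} treat, to an arbitrary cluster monomial $\mathsf{X}$. I would first try induction on the number of arcs in $\mathsf{X}=\mathsf{x}\cup\mathsf{X}'$, writing $[\mathsf{X}]=q^{c}[\mathsf{x}][\mathsf{X}']$. At each step the all-positive resolution of $\mathsf{x}\cdot(\mathsf{E}'_{\varepsilon_+})$ should equal $\mathsf{E}_{\varepsilon_+}$ of $\mathsf{X}\cdot\mathsf{Y}$. This requires checking that the arcs of $\mathsf{X}$ are disjoint, so no crossings are lost, and that minimal position is preserved. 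Degrees and $\Lambda$ are additive, and $\mu$ and $a$ are additive over the arcs of $\mathsf{X}$, which closes the induction.
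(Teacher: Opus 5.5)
Your outline for (1), (2), the top-$q$-degree comparison in (3), and (4) via codegree triangularity matches the paper's proof. The paper reruns Proposition~\ref{prop:leading-g} with Theorem~\ref{thm:band-is-triangular} supplying the unitriangular band decomposition. It then simply asserts that the single-arc analysis of Proposition~\ref{prop:initial-triangular} extends to an initial cluster monomial $\mathsf{X}$.

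Where you go further, in the induction on the number of arcs, the step you flag does fail: $\mathsf{x}\cdot\mathsf{E}'_{\varepsilon_+}$ need not be in minimal position. Here is an example in $\mathbb{A}_{1,1}$.
\begin{itemize}
\item Let $\ell$ be the core loop and $\gamma_0$ a bridging arc.
\item Let $\gamma_1$ be the arc homotopic to the all-positive resolution of $\gamma_0\cdot\ell$ (a Dehn twist of $\gamma_0$), drawn disjoint from $\gamma_0$.
\item Then $\gamma_0\cup\gamma_1$ is part of a triangulation, so $\mathsf{X}=\gamma_0\cup\gamma_1$ is an initial cluster monomial, and $\mathsf{X}\cdot\ell$ is minimal with two crossings.
\item Take $\mathsf{X}'=\gamma_0$ and $\mathsf{x}=\gamma_1$. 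The curve $\mathsf{E}'_{\varepsilon_+}$ consists of the two halves of $\gamma_0$ joined by a full turn of $\ell$. It is homotopic to $\mathsf{x}$ but crosses it once, at $\mathsf{x}\cap\ell$.
\end{itemize}
So $\mu(\mathsf{x},\mathsf{E}'_{\varepsilon_+})=0$ while the picture has one crossing. The single-arc statement, and the bead-chain analysis behind it (e.g.\ the consequences of Proposition~\ref{prop:unknot-attatchmentpoints}), is only available in minimal position. Outside minimal position the all-positive state depends on the chosen representative: at an empty bigon the two $+$ smoothings reconnect the strands rather than removing the bigon. Hence the step $\deg[\mathsf{E}_{\varepsilon_+}]=\deg[\mathsf{x}]+\deg[\mathsf{E}'_{\varepsilon_+}]$ is unjustified. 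Reversing the order happens to give a minimal configuration in this example. You would still need an argument that a good order always exists, and none is apparent.

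The fix is the route you mention first, which is the one the paper uses. Run the tubular-neighbourhood and bead-chain argument once for all of $\mathsf{X}$. Take $\mathcal{T}$ to be a thin neighbourhood of the union of its arcs, which meet only at marked points. Then the crossing set is $\bigsqcup_i(\mathsf{x}_i\cap\mathsf{Y})$, and only the minimality of $\mathsf{X}\cdot\mathsf{Y}$ itself is used. Your additivity remark is fine for the $\Lambda$-identity: each $\mathsf{x}_i\cdot\mathsf{Y}$ is minimal, $\Lambda$ is bilinear, and $\mu$ and $a$ split over the arcs. Only the degree identity for $\mathsf{E}_{\varepsilon_+}$ needs the joint analysis.

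Also drop your alternative argument for (4). Part (3) does not apply to $[\mathsf{Y}][\mathsf{X}]$, because $\mathsf{Y}$ need not be a cluster monomial. Moreover, the bar involution fixes the bar-invariant elements involved, so it cannot turn a degree statement into a codegree statement; the paper uses it only for the non-vanishing in (2).
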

  \begin{proof}

 (1) Similar to Proposition \ref{prop:leading-g}(1), $[\mathsf{E}_{\varepsilon_+}] \neq 0$ follows directly from Lemma \ref{lem:Muller-C1}.

 (3) In the proof of Proposition \ref{prop:initial-triangular}, the leading term is explicitly determined. While Proposition \ref{prop:initial-triangular} deals with the case where $\mathsf{x}$ is a single arc, the argument naturally extends to the case where $\mathsf{X}$ is a multicurve corresponding to an initial cluster monomial. Following the notation therein, we have:
	$$\deg([\mathsf{E}_{\varepsilon_+}]) = \deg(\langle \mathsf{Z}_0 \cup \Loops' \rangle) = \deg([\mathsf{X}]) + \deg([\mathsf{Y}']).$$
Furthermore, we have
$$ \begin{aligned}
    \Lambda(\deg ([\mathsf{X}]),\deg ([\mathsf{Y}'])) 
    = 2\mu(\mathsf{X}, \mathsf{Y}) + 2a 
    = 2\mu(\mathsf{X}, \mathsf{Y}') + 2a.
\end{aligned} $$

(2)(4) These statements follow from a similar symmetric argument as applied in the proof of Proposition \ref{prop:leading-g}.

  \end{proof}

\appendix

\section{Nested contractible sequence}\label{sec:nested}
We provide a refinement of Proposition \ref{prop:unknot-attatchmentpoints} concerning contractible components. Although this description is not required for the main results of this paper, it may be of independent interest.

\begin{lem}[Existence of inner nested components]\label{lem:inner-existence}
    Let $\mathsf{C}$ be a strip-free unknot with at least one outward angle. Then, in the interior of the disk $D$ bounded by $\mathsf{C}$, there exists another strip-free unknot $\mathsf{n}'$ which is a component of some $\mathsf{E}_{\varepsilon}$.
\end{lem}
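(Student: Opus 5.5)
Throughout, $\mathsf{C}$ is a component of some $\mathsf{E}_{\varepsilon}$, $D$ is the disk it bounds, and $\mathcal{T}$ is the thin tubular neighborhood of $\mathsf{x}$ used for Lemma~\ref{lem:Muller-C1}. My plan is to find a region inside $D$ bounded by strands of $\mathsf{x}$ and $\mathsf{Y}$, and then choose smoothings that close that region up into an unknot.

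\textbf{Locating the region.} Let $p$ be an attachment point of $\mathsf{C}$ carrying an outward angle, coming from a crossing $c$. The outward angle means the other strand of $\mathcal{D}_c$ lies outside $D$. Since $\mathsf{C}$ is strip-free, $\mathsf{C}\cap\mathcal{D}_c$ is not a strip, so only one strand of $\mathcal{D}_c$ belongs to $\mathsf{C}$. Consequently the two half-strands of $\mathsf{x}\cdot\mathsf{Y}$ leaving $c$ on the side of $p$ run into the interior of $D$.

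Next I use the tree $\Gamma$ for the decomposition $D\setminus\partial\mathcal{T}$. Every leaf of $\Gamma$ is of Type A, because $\mathsf{C}$ is an unknot. The interior of $D$ contains the parts of $\mathsf{x}$ and $\mathsf{Y}$ that were cut away at attachment points of $\mathsf{C}$. I then track the half-strand that enters $D$ at $c$:
\begin{itemize}
\item if it is a segment of $\mathsf{Y}$, it must leave $D$ again through $\mathsf{C}$, since $\mathsf{Y}$ is simple and $D$ contains no marked points;
\item if it is a segment of $\mathsf{x}$, the same holds.
\end{itemize}
Together with the pieces of $\mathsf{C}$, these chords cut $D$ into subregions. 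Each subregion is a polygon whose sides alternate between segments of $\mathsf{x}$ and segments of $\mathsf{Y}$, and whose corners are crossings in $\mathsf{x}\cap\mathsf{Y}$. The outward angle at $p$ guarantees at least one such chord, so some subregion $R$ lies strictly inside $D$ and does not meet $\mathsf{C}$. To get it, I take an innermost subregion, using that $\Gamma$ is a tree, or equivalently choosing a chord that cuts off a region containing no other chord.

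\textbf{Building the new unknot.} Define $\varepsilon'$ by smoothing each corner crossing of $R$ in the direction that keeps the boundary of $R$ together, so that the inward sectors of $R$ become the angles. Smooth all other crossings arbitrarily, for instance as in $\varepsilon$. Then $\partial R$ becomes a closed component $\mathsf{n}'$ of $\mathsf{E}_{\varepsilon'}$. It is null-homotopic because it bounds $R\subset D$, and it lies in the interior of $D$.

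\textbf{Strip-freeness.} A strip of $\mathsf{n}'$ at a corner $c'$ would need both strands of $\mathcal{D}_{c'}$ to lie on $\mathsf{n}'$ with outward angles. That would make $R$ touch itself at $c'$, which is impossible once $R$ is chosen innermost, since an innermost region is an embedded disk. If $R$ is not embedded, I will pass to a smaller region and repeat.

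\textbf{Main obstacle.} The step I expect to be hardest is showing that an innermost polygon $R$ actually exists, with at least two corners and not degenerate. The risk is that every chord might meet $\mathsf{C}$ in a way that leaves no empty region. My first attempt will be to use minimal position of $\mathsf{x}$ and $\mathsf{Y}$, which rules out bigons between them. I will combine this with the Type A leaf description, which says the leaves have opposite-sign angles in the order $(+,-)$, to argue that the outermost chord through $c$ and the adjacent boundary pieces enclose a polygon with at least four corners. Proposition~\ref{prop:unknot-attatchmentpoints} then gives the count of corners.
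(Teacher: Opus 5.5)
There is a genuine gap: your route depends on a face $R$ that you never produce. The missing idea is also simpler than your construction, and your local analysis at $c$ hides it because it is backwards.

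\textbf{The local picture at $c$.} The angle at $p$ is the sector on the side of $p$'s strand facing away from the other strand at $c$ (Definition~\ref{def:angles}). If that sector points out of $D$, then $D$ contains the corridor between the two smoothed strands. So the \emph{other} strand at $c$ lies in $D$, not outside it; compare the strip picture, where $D$ is the hatched corridor and both angles are outward. Strip-freeness says this strand is not on $\mathsf{C}$, so it lies in the interior of $D$.

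That observation is the whole proof. Let $\mathsf{n}'$ be the component of the same $\mathsf{E}_\varepsilon$ through that strand.
\begin{itemize}
\item It is disjoint from $\mathsf{C}$, so it lies in the interior of $D$.
\item It cannot be an arc, since $D$ contains no marked points. So it is a closed curve in a disk, i.e.\ an unknot.
\item If $\mathsf{n}'$ has strips, remove them. They sit at crossings both of whose strands lie on $\mathsf{n}'$, so $\mathsf{C}$ is untouched. This splits $\mathsf{n}'$ into strip-free unknots that stay inside $D$.
\end{itemize}
This is the paper's argument. It needs no faces of the arrangement, no tree $\Gamma$, and no minimal-position input.

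\textbf{The missing step in your route.} You need a face $R$ of $\mathsf{x}\cup\mathsf{Y}$ inside $D$ that shares no side with $\mathsf{C}$, and you do not supply one.
\begin{itemize}
\item The inference ``the outward angle guarantees at least one chord, so some subregion lies strictly inside $D$'' does not follow.
\item Your selection rule does the opposite of what you need. A chord that cuts off a region containing no other chord cuts it off against an arc of $\mathsf{C}$, so that region has sides on $\mathsf{C}$.
\item At the chord's endpoint $c_i$, that region occupies a quadrant bounded by a half-strand of $\mathsf{C}$. Keeping its boundary together forces the smoothing at $c_i$ to be opposite to $\varepsilon(c_i)$. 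This destroys $\mathsf{C}$, and the resulting loop runs along part of $\mathsf{C}$ instead of lying in the interior of $D$.
\item The leaves of $\Gamma$ are small Type A pieces inside $\mathcal{T}$, not faces of the arrangement, so they do not give $R$ either.
\end{itemize}

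An interior face does exist. The natural way to get it is to iterate the argument above down to an innermost unknot, all of whose angles are inward (as in Proposition~\ref{prop:unknot-chain}); its disk is then a single face. At that point the face construction adds nothing to the lemma.
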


\begin{proof}
    Since $\mathsf{C}$ contains no strips, the angle adjacent to its outward angle must be attached to another component $\mathsf{n}'$ of $\mathsf{E}_\varepsilon$. Since $D$ is contractible, the strand corresponding to this adjacent angle is contained within $D$. As $\mathsf{C} \cap \mathsf{n}' = \emptyset$, $\mathsf{n}'$ must be contained in the interior of $D$.
    
    Moreover, since $\mathsf{n}'$ is also contractible and cannot end at a marked point, it must be a distinct unknot component inside $D$. If $\mathsf{n}'$ contains any strips, we can adjust the smoothing choice at those crossings to obtain a strip-free unknot.
\end{proof}

\begin{lem}[Alternating signs]\label{lem:unique-component-alternation}
    Let $D$ be an embedded disk in $\Sigma$ whose boundary $\mathsf{n} = \partial D$ intersects the tubular neighborhood $\mathcal{T}$ of $\mathsf{x}$ transversely.
    If the portion of $D$ outside $\mathcal{T}$, denoted by $D \setminus \mathcal{T}$, is connected, then the smoothing signs must strictly alternate along $\mathsf{n}$ (i.e., $\dots, +, -, +, -, \dots$).
\end{lem}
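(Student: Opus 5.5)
We argue via the tubular neighborhood decomposition of $D$, in the spirit of the proof of Lemma \ref{lem:unknot-neg-angles}. Along $\mathsf{n}$, attachment points occur only inside $\mathcal{T}$, and $\mathsf{n}$ alternates between arcs inside $\mathcal{T}$ and arcs outside $\mathcal{T}$. Consider the graph $\Gamma$ whose vertices are the components of $D\setminus\partial\mathcal{T}$; by \cite[Proof of Lemma C.1]{muller2016skein} it is a tree. The hypothesis that $D\setminus\mathcal{T}$ is connected says that there is a single vertex $v_0$ lying outside $\mathcal{T}$. Every edge of a tree joins a vertex inside $\mathcal{T}$ to one outside, since components alternate across $\partial\mathcal{T}$. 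Hence $\Gamma$ is a star centered at $v_0$, and every component of $D\cap\mathcal{T}$ is a leaf of $\Gamma$.

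The next step classifies these leaves. Since $\mathsf{n}$ is a closed loop bounding a disk, no marked point lies on $\partial D$, so Type B is excluded. By the leaf analysis preceding Lemma \ref{lem:Muller-C1}, each leaf is therefore of Type A. Its boundary consists of one segment of $\mathsf{x}\setminus\mathsf{Y}$ joining exactly two attachment points, and these have opposite signs, appearing in the order $(+,-)$ in the clockwise orientation of $\partial D_0$.

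One point needs care: the clockwise orientation of $\partial D_0$ must agree with the orientation of $\mathsf{n}=\partial D$ along the shared arc. I would check this by noting that $D_0\subset D$ lies on the same side of that arc as $D$, so the induced boundary orientations coincide there.

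Traversing $\mathsf{n}$, we then meet the leaves one at a time. Between consecutive leaves, $\mathsf{n}$ runs through the boundary of $v_0$ outside $\mathcal{T}$, where there are no attachment points. Every attachment point on $\mathsf{n}$ lies in some leaf, so the sign sequence read along $\mathsf{n}$ is a concatenation of blocks $(+,-)$. This gives $+,-,+,-,\dots$ cyclically.

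The main obstacle is justifying that no component of $D\cap\mathcal{T}$ has degree $\geq 2$. This is exactly where connectivity of $D\setminus\mathcal{T}$ enters: a component of $D\cap\mathcal{T}$ of degree $\ge2$ would be adjacent to two distinct outside components, contradicting that $v_0$ is unique.
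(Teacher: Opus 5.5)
Your proof is correct. It shares the paper's skeleton: cut $\mathsf{n}$ into arcs inside $\mathcal{T}$ that bound leaf components and outer arcs that carry no attachment points, then read off one ordered pair of signs per leaf. The justifications, however, are different.

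The paper orients $\mathsf{n}$ as $\partial D$ and labels the points of $\mathsf{n}\cap\partial\mathcal{T}$ as entry or exit points via an index $\iota=\mp 1$. It asserts a correspondence between smoothing signs and these indices. It then rules out an outer segment joining two points of equal index by a ``handle with a half-twist'' argument contradicting two-sidedness of $D$.

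You instead use connectivity of $D\setminus\mathcal{T}$ to show that $\Gamma$ is a star, so every component of $D\cap\mathcal{T}$ is a Type A leaf. You then apply the $(+,-)$ clockwise pattern of Type A leaves, together with your check that $\partial D_0$ and $\partial D$ induce the same orientation on their common arc.

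Your route makes explicit where the connectivity hypothesis enters; the paper simply asserts that $\mathsf{n}\cap\mathcal{T}$ consists of leaf-component segments. It also avoids the M\"obius-band step, which does little work: entry and exit points automatically alternate along any closed curve transverse to $\partial\mathcal{T}$. So the substance of the paper's proof lies in the stated sign--index correspondence, which is exactly the Type A sign pattern you invoke explicitly.

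One small point to tighten. When you say a component of $D\cap\mathcal{T}$ of degree $\ge 2$ would meet two distinct outside components, count one edge per arc of $D\cap\partial\mathcal{T}$. Then use that each such arc separates the disk $D$, which is what Muller's argument gives. If $\Gamma$ is read as a simple graph, as in the paper's definition, a component sharing two boundary arcs with $v_0$ would not be excluded by the star property alone.
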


\begin{proof}
    We prove this using the induced orientation on the boundary and the intersection indices with the tubular neighborhood.
    
    Fix an orientation on $D$, inducing a tangent vector field $V$ on $\mathsf{n}$. For each intersection point $p \in \mathsf{n} \cap \partial \mathcal{T}$, define the intersection index $\iota(p) \in \{+1, -1\}$, corresponding to $V$ pointing out of $\mathcal{T}$ (exit point) or into $\mathcal{T}$ (entry point), respectively. On a fixed side of $\mathsf{x}$, there is a geometric bijection between the smoothing signs $\varepsilon_+$ and $\varepsilon_-$ and the intersection indices $\iota$. Thus, it suffices to prove that the intersection indices alternate.
    
    The boundary $\mathsf{n}$ is decomposed into two types of segments:
    \begin{enumerate}
        \item \textbf{Leaf component segments}: These lie inside $\mathcal{T}$. Since each leaf component $d \subset D \cap \mathcal{T}$ is a disk, by continuity of flow, its boundary segment must originate from an entry point ($\iota=-1$) and terminate at an exit point ($\iota=+1$). Thus, each segment of the leaf component contributes a pair of indices $\{-1, +1\}$.
        \item \textbf{Outer segments}: These lie outside $\mathcal{T}$,  connecting all leaf components in sequence.
    \end{enumerate}
    
    Consider an outer segment connecting two consecutive leaf components $d_i$ and $d_{i+1}$. This segment must originate from the termination point of $d_i$ (exit point, $\iota=+1$) and connect to the starting point of $d_{i+1}$ (entry point, $\iota=-1$).
    
    \textbf{Contradiction}: Suppose a segment connects two points with the same index (e.g., two exit points). Together with:
   \begin{itemize}
       \item The two segments of the leaf components 
       \item An interval connecting the remaining two points in the $\mathcal{T}$-disjoint component,
   \end{itemize}
   they form a handle. Geometrically, this implies the tangent vectors at the connection are "opposing" each other. To connect these vectors while maintaining the embeddedness of $D$ (no Möbius bands), the handle would require a half-twist. This contradicts the fact that $D$ is a two-sided embedded disk (with trivial normal bundle).
    
    Therefore, the outer segments must sequentially connect an exit point to an entry point. This forces the intersection indices (and thus the corresponding smoothing signs) of all adjacent crossings to alternate strictly.
\end{proof}

\begin{prop}[Nested contractible sequence]\label{prop:unknot-chain}
    Let $\mathsf{C}$ be a strip-free contractible component of a  multicurve $\mathsf{E}_{\varepsilon}$. Consider all those $\mathsf{E}_{\varepsilon}$ that contain $\mathsf{C}$.
    \begin{enumerate}
        \item If $\mathsf{C}$ is an unknot, then there exists a sequence of distinct strip-free unknots $\{\mathsf{n}_i\}_{i=1}^s$ such that $\mathsf{n}_s = \mathsf{C}$, and the disks $D_i$ bounded by $\mathsf{n}_i$ satisfy $D_1 \subset D_2 \subset \dots \subset D_s$. Furthermore, the smoothing signs of the innermost unknot $\mathsf{n}_1$ alternate strictly. For each unknot $\mathsf{n}_i$, $i \leq s-1$, if we denote the number of negative angles by $k_i$ (where $k_i \geq 2$), then the total angle count is $2k_i$, and the sequence satisfies $k_1 \leq k_2 \leq \dots \leq k_{s-1}$.
	
        \item If $\mathsf{C}$ is a contractible arc ending at a marked point $p$, then there exists a sequence of distinct strip-free contractible curves (loops or arcs) $\{\mathsf{c}_i\}_{i=1}^s$ such that $\mathsf{c}_s = \mathsf{C}$, and the regions $D_i$ (disks or monogons) bounded by $\mathsf{c}_i$ satisfy $D_1 \subset D_2 \subset \dots \subset D_s$. 
Furthermore, all angles of the innermost curve $\mathsf{c}_1$ are inward. For each curve $\mathsf{c}_i$, if we denote the number of negative angles by $k_i$ (where $k_i \geq 2$), then the total angle count is $2k_i$ (if a loop) or $2k_i - 1$ (if an arc), and the sequence satisfies $k_1 \leq k_2 \leq \dots \leq k_{s-1}$.
    \end{enumerate}
\end{prop}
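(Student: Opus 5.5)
I plan to build the sequence from the outside in, by iterating Lemma \ref{lem:inner-existence}, and to obtain the sign statements from Lemma \ref{lem:unique-component-alternation} and the leaf-component analysis of the tree $\Gamma$ used in Lemma \ref{lem:unknot-neg-angles} and Proposition \ref{prop:unknot-attatchmentpoints}.

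For (1), set $\mathsf{n}_s=\mathsf{C}$. Suppose $\mathsf{n}_i$ has been constructed. If $\mathsf{n}_i$ has an outward angle, Lemma \ref{lem:inner-existence} gives a strip-free unknot $\mathsf{n}_{i-1}$ in the interior of $D_i$, and it is a component of some $\mathsf{E}_{\varepsilon'}$. We may choose $\varepsilon'$ to agree with $\varepsilon$ on the crossings of $\mathcal{C}_{\mathsf{n}_i}$, so $\mathsf{n}_i$ still occurs in that multicurve. Then $D_{i-1}\subsetneq D_i$, and the new disk contains strictly fewer crossings of $\mathsf{x}\cap\mathsf{Y}$. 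Since there are finitely many crossings, the process terminates at an unknot $\mathsf{n}_1$ with no outward angle.

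For this innermost $\mathsf{n}_1$, I will argue that every angle is inward. This should force $D_1\setminus\mathcal{T}$ to be connected: if the tree $\Gamma$ had two $\mathcal{T}$-disjoint vertices, some $\mathcal{T}$-intersecting vertex between them would be a strip region carrying outward angles. Lemma \ref{lem:unique-component-alternation} then gives strict alternation of signs on $\mathsf{n}_1$. For the counts, I will show that on any strip-free unknot the angles pair up along the boundary. Each $\mathcal{T}$-intersecting piece of $D_i$ that is not a strip contributes an inward pair $(+,-)$, as in the Type A leaves, or an outward pair. Hence the total angle count is $2k_i$.

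For the monotonicity $k_{i-1}\le k_i$, the idea is that every negative angle of $\mathsf{n}_{i-1}$ is adjacent, through its crossing, to an angle of $\mathsf{n}_i$, or is reached through an outward angle of $\mathsf{n}_i$. This gives an injection from the negative angles of $\mathsf{n}_{i-1}$ into those of $\mathsf{n}_i$. I would prove the injection by tracking crossings inside $D_i$: a crossing in the interior of $D_i$ that is not adjacent to $\mathsf{n}_i$ cannot exist, because $\mathsf{x}\cdot\mathsf{Y}$ is in minimal position and so has no bigons inside a disk.

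For (2), the argument runs the same way, starting from the contractible arc and using the monogon version. The Type B leaf, which carries the marked point $p$, accounts for the single unpaired angle, so the count is $2k_i-1$. The inner curves $\mathsf{c}_i$ may be arcs that also end at $p$, or loops. The recursion stops at a curve $\mathsf{c}_1$ whose angles are all inward.

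The main obstacle I expect is the monotonicity $k_1\le\dots\le k_{s-1}$ together with the claim that an all-inward innermost curve has $D_1\setminus\mathcal{T}$ connected. Both need a careful analysis of how the tree $\Gamma$ of $D_{i-1}$ embeds in the tree of $D_i$. I would try first to prove an injection of Type A leaves of $D_{i-1}$ into Type A leaves of $D_i$ via the nesting of disks, since each Type A leaf carries exactly one negative attachment point.
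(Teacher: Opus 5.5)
You have the paper's Steps 1 and 2 right: you iterate Lemma~\ref{lem:inner-existence} inward from $\mathsf{C}$ and stop at an unknot with no outward angle, then use connectivity of $D_1\setminus\mathcal{T}$ and Lemma~\ref{lem:unique-component-alternation}. The degree-$\geq 2$ vertex of $\Gamma$ you invoke is not a strip in the sense of Definition~\ref{def:strip}. But its boundary contains either a piece of $\mathsf{n}_1$ running across $\mathcal{T}$ or a cap seen from its convex side, and either one carries an outward angle, which is all you need.

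The count $2k_i$ for $i\geq 2$, however, rests on a false claim. Take a piece of $\mathsf{n}_i$ that enters $\mathcal{T}$ on one side of $\mathsf{x}$ and leaves on the other. Its two angles lie on opposite sides of the piece, so one is inward and one outward, and they have the \emph{same} sign. It therefore does not contribute a $(+,-)$ pair, inward or outward. The red arcs in Figure~\ref{fig:Reg-leaf}, labelled $(-,-)$ and $(+,+)$, are exactly such pieces. So $\#\{+\}=\#\{-\}$ on $\mathsf{n}_i$ is equivalent to $\mathsf{n}_i$ having as many $(+,+)$ crossing pieces as $(-,-)$ ones, i.e.\ as many outward $+$ angles as outward $-$ angles. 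That is precisely what has to be proved, and your pairing argument does not address it.

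The monotonicity step has a further gap. Your claim that no crossing in $\operatorname{int}D_i$ can be non-adjacent to $\mathsf{n}_i$ already fails in your own construction once $i\geq 3$. There $\mathsf{n}_{i-1}$ has an outward angle, since otherwise the recursion would have stopped at it. That crossing has its centre in $\operatorname{int}D_{i-1}\subset\operatorname{int}D_i$, and neither of its strands lies on $\mathsf{n}_i$. Minimal position of $\mathsf{x}$ and $\mathsf{Y}$ only excludes $\mathsf{x}$--$\mathsf{Y}$ bigons, and an arc of $\mathsf{x}$ and an arc of $\mathsf{Y}$ crossing once inside a disk bound none. Consequently some negative angles of $\mathsf{n}_{i-1}$ have no evident image in $\mathsf{n}_i$: those that are outward, and those that are inward but adjacent to another inner component. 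So the injection is not defined. Relatedly, your outside-in choice gives no control over the outward angles of $\mathsf{n}_i$, which may touch several inner components besides $\mathsf{n}_{i-1}$.

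The paper's inductive step uses a different idea. Given the inner curve, it chooses the next curve of \emph{minimal enclosed area} among the candidates. This forces every angle not adjacent to the inner curve to be inward. It then reads off the signs from the local structure of $D_2\setminus D_1$ near $\mathcal{T}$ (Figure~\ref{fig:Reg-leaf}), where each outward angle lies on a $\mathcal{T}$-crossing piece wrapping a Type A cap of the inner curve.

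Your fallback of injecting Type A leaves of $D_{i-1}$ into those of $D_i$ also fails as stated. In that same figure the Type A leaf of $D_1$ sits inside a degree-two, non-leaf vertex of the tree for $D_2$.
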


\begin{proof}
    (1) Let $D$ be the disk bounded by the unknot $\mathsf{C}$.
    
    \textbf{Step 1: Construction of the nested sequence}
    If $\mathsf{C}$ has an outward angle, by Lemma \ref{lem:inner-existence}, there exists a smaller unknot contained in the interior of $D$. Since the components of $\mathsf{E}_\varepsilon$ are finite, iterating this process yields a finite sequence of strip-free unknots $\mathsf{n}_1, \ldots, \mathsf{n}_s = \mathsf{C}$, where $D_1 \subset D_2 \subset \ldots \subset D_s$.
    
    \textbf{Step 2: Properties of the innermost unknot $\mathsf{n}_1$}
    Consider the innermost loop $\mathsf{n}_1$. Since it is innermost, it cannot contain any smaller components. Therefore, all angles of $\mathsf{n}_1$ must be inward angles.
    
    The fact that all angles are inward implies that the disk $D_1$ bounded by $\mathsf{n}_1$ does not cross $\mathsf{x}$ locally. Consequently, the portion of $D_1$ outside the tubular neighborhood is connected, meaning $\mathsf{n}_1$ contains exactly one unique $\mathcal{T}$-disjoint component.
    
    By Lemma \ref{lem:unique-component-alternation}, this forces the smoothing signs (and corresponding angles) along $\mathsf{n}_1$ to alternate strictly.
    Since $\mathsf{n}_1$ has $2k_1$ angles, these angles must exhibit an alternating pattern $(+, -, +, - \dots)$. Note that $k_1 \geq 2$, otherwise the minimal intersection property of $\mathsf{x}$ and $\mathsf{Y}$ would be violated.
    
    \textbf{Step 3: Inductive step and properties of $\mathsf{n}_2$}
    Consider $\mathsf{n}_2$, whose angle adjacent to $\mathsf{n}_1$ is an outward angle.
    Among all possible candidates for $\mathsf{n}_2$, choose the one bounding the disk of minimal area.
    
    We assert that any remaining angles of $\mathsf{n}_2$ must be inward. Otherwise, if there were an additional outward angle, its adjacent angle would attach to another unknot inside $D_2$. Iteratively adjusting the two nearby smoothings of the angle along this unknot would yield a disk smaller than $D_2$, contradicting the minimality assumption of $\mathsf{n}_2$.
    
    Furthermore, along the segment between any two consecutive outward angles, there must be an even number of inward angles, and their signs must alternate. This follows from analyzing the geometric structure of $D_2 \setminus D_1$ near the tubular neighborhood $\mathcal{T}$, as shown in Figure \ref{fig:Reg-leaf}. In addition to the
 type A leaf components, $D_2 \cap \mathcal{T}$ contains only these regions.
    
    \begin{figure}[H]
        \centering
        \begin{tikzpicture}[scale=1.1]
            \begin{scope} [xshift=-1.5in, scale=.2]
                \draw [dashed, black, thin, fill=gray!10] (-12,-4) to (12,-4) arc (-90:90:4) to (-12,4) arc (90:270:4);
                \draw [thick, dashed, black] (-16,0) to [out=0, in=90] (-12,-4);
                \draw[thick, dashed, black] (12,-4) to [out=90,in=180] (16,0);
                
                \fill [pattern=north east lines, pattern color=red!40] (-12,4) to [out=270,in=180] (-8,0) to [out=0,in=90] (-4,-4) to (4,-4) to [out=90,in=180] (8,0) to [out=0,in=270] (12,4);
                
                \draw[solid, thick, red] (-12,4) to [out=270,in=180] node[marked, pos=.5] {} node[above=.1] {$-$} (-8,0) to [out=0,in=90] node[marked, pos=.5] {} node[below=.1] {$-$} (-4,-4);
                \draw[solid, thick, red] (4,-4) to [out=90,in=180] node[marked, pos=.5] {} node[below=.1] {$+$}  (8,0) to [out=0,in=270] node[marked, pos=.5] {} node[above=.1] {$+$} (12,4);
                
                \fill [pattern=north east lines, pattern color=blue!40] (-4,4) to [out=270,in=180] (0,0) to [out=0, in=270] (4,4);
                \draw[thick, blue] (-4,4) to [out=270,in=180] node[marked, pos=.5] {} node[above=.1] {$-$}(0,0)
                to [out=0, in=270] node[marked, pos=.5] {} node[above=.1] {$+$} (4,4);
                
                \node[blue] at (0,2) {$D_1$};
                \node[red]  at (0,-2) {$D_2$};
                \node at (8,-2) {$\mathcal{T}$};
            \end{scope}
            
            \begin{scope}[xshift=1.5in, scale=.2]
                \draw [fill=gray!10, dashed, thin] (-12,-4) to (12,-4) arc (-90:90:4) to (-12,4) arc (90:270:4);
                \draw[thick, dashed] (-16,0) to [out=0,in=270] (-12,4);
                
                \fill [pattern=north east lines, pattern color=red!40] (-12,-4) to [out=90,in=180] (-8,0) to [out=0,in=270] (-4,4) to (4,4)  to [out=270,in=180] (8,0) to [out=0,in=90] (12,-4);
                
                \draw[thick, red] (-12,-4) to [out=90,in=180] node[marked, pos=.5] {} node[below=.1] {$+$}  (-8,0) to [out=0,in=270] node[marked, pos=.5] {} node[above=.1] {$+$} (-4,4);
                
                \fill [pattern=north east lines, pattern color=blue!40] (-4,-4) to [out=90,in=180] (0,0) to [out=0, in=90] (4, -4);  
                \draw[thick, blue] (-4,-4) to [out=90,in=180] node[marked, pos=.5] {} node[below=.1] {$+$}  (0,0) to [out=0, in=90] node[marked, pos=.5] {} node[below=.1] {$-$} (4,-4);
                
                \draw[thick, red] (4,4) to [out=270,in=180] node[marked, pos=.5] {} node[above=.1] {$-$} (8,0) to [out=0,in=90] node[marked, pos=.5] {} node[below=.1] {$-$} (12,-4);
                
                \draw [thick, dashed] (12,4) to [out=270,in=180] (16,0);
                
                \node[blue] at (0,-2) {$D_1$};
                \node[red]  at (0,2) {$D_2$}; 
                \node at (-8, 2) {$\mathcal{T}$};
            \end{scope}
        \end{tikzpicture}
        \caption{Regions near leaf components}
        \label{fig:Reg-leaf}
    \end{figure}

     Thus, the proposition holds for $\mathsf{n}_2$. Iterating this process completes the proof of (1).
    
    (2) The argument for arcs is completely analogous.
\end{proof}

   \begin{example}
       Consider the marked surface $\Sigma_1$ illustrated in Figure~\ref{fig:surface_arc_multicurve_1} with an arc $\mathsf{x}$ (red) and a multicurve $\mathsf{Y}$ (blue) and the superposition $\mathsf{x} \cdot \mathsf{Y}$. 

       \begin{figure}[H]
   	   \centering
       \begin{tikzpicture}[scale=.3]
           \begin{scope}
               \draw[solid] (-25,-3) arc (270: 90 : 1 and 3);
               \draw [solid] (-25,-3) arc (-90: 90: 1 and 3) node[pos=.3, coordinate] (A) {};
               
               \draw [dashed] (-15,-3) arc (-90: 90: 1 and 3);
               \draw [dashed] (-5,-3) arc (-90: 90: 1 and 3);
               \draw [dashed] (5,-3) arc (-90: 90: 1 and 3);
               \draw [dashed] (15,-3) arc (-90: 90: 1 and 3);
                              
               \draw [solid] (-25,3) to (-24, 3)
                             to [out = 0, in= 270] (-23, 4) 
                             to (-23, 5);

               \draw [solid] (-23,5) arc (-180: 180 : 3 and 1);
               
               \draw [solid] (-17,5) to (-17, 4) 
                             to [out=270, in=180] (-16, 3) 
                             to (16, 3) to [out = 0, in= 270] (17,4) to (17, 5);
               
               \draw [solid] (17, 5) arc (-180: 180 : 3 and 1);
               
               \draw [solid] (23,5) to (23, 4) to [out=270, in=180] (24, 3) to (25, 3);
               
               \draw [solid] (25, 3) arc (90: -90: 1 and 3);
               
               \draw [solid] (-25,-3) to (-14, -3) to [out=0, in= 90] (-13, -4) to (-13, -5);
               
               \draw [solid] (-13, -5) arc (-180: 180 : 3 and 1);
               
               \draw [solid] (-7, -5) to (-7, -4) to [out=90, in=180] (-6,-3) to (-4, -3) to [out=0, in=90] (-3, -4) to (-3, -5);
               
               \draw [solid] (-3, -5) arc (-180: 180 : 3 and 1);
               
               \draw [solid] (3, -5) to (3, -4) to [out=90, in=180] (4,-3) to (6, -3) to [out=0, in=90] (7, -4) to (7, -5);
               
               \draw [solid] (7, -5) arc (-180: 180 : 3 and 1);
               
               \draw [solid] (13, -5) to (13, -4) to [out=90, in=180] (14,-3) to (25, -3);
               
               \draw [thick, red] (-24, 3) arc (90: 0: 1 and 3)
                                  to [out=270, in=0] (A) node[circle, fill=red, inner sep=1pt] {};
               \draw [thick, red, dashed] (-23, 4) arc (180:0: 3 and 1);
               \draw [thick, red] (-16,3) to [out=270, in=180] (-15, 2)
                                  to (-4, 2);
               \draw [thick, red] (-4, -3) arc (270: 360 : 1 and 3) 
                                  to [out=90, in=0] (-4,2);
               \draw [thick, red, dashed] (-3, -4) arc (180:0: 3 and 1); 
               \draw [thick, red] (4,-3) arc (270: 360 : 1 and 3) 
                                  to [out=90, in=180] (6, 2)
                                  to (15,2) 
                                  to [out=0, in=270] (16, 3);
               \draw [thick, red, dashed] (17,4) arc (180:0: 3 and 1);
               \draw [thick, red] (24, 3) arc (90: 0: 1 and 3) 
                                  to [out=270, in=180] ($(A)+(50,0)$) node[circle, fill=red, inner sep=1pt] {};      
               
               \draw [thick, blue] (-20, 4) node[circle, fill=blue, inner sep=1pt] {} to (-20, 2)
                                   to [out=270, in=180] (-19, 1)
                                   to (-3.5, 1);
               \draw [thick, blue] (-2.5,1) to (4.5,1);
               \draw [thick, blue] (5.5, 1) to (19,1)
                                   to [out=0, in=270] (20,2)
                                   to (20,4) node[circle, fill=blue, inner sep=1pt] {};
               
               \draw [thick, blue] (-24, 0) node[circle, fill=blue, inner sep=1pt] {} to (-23.3,0); 
               \draw [thick, blue] (-22.5, 0) to (-3.5, 0); 
               \draw [thick, blue] (-2.5,0) to (4.5,0); 
               \draw [thick, blue] (5.5, 0) to (24.5, 0);
               \draw [thick, blue] (25.3, 0) to (26, 0) node[circle, fill=blue, inner sep=1pt] {}; 
               
               \draw [thick, blue] (-24, -1) node[circle, fill=blue, inner sep=1pt] {} to (-23.4,-1); 
               \draw [thick, blue] (-22.5-.2, -1) to (-3.5, -1); 
               \draw [thick, blue] (-2.5,-1) to (4.5,-1); 
               \draw [thick, blue] (5.5, -1) to (24.7, -1);
               \draw [thick, blue] (25.4, -1) to (26, -1) node[circle, fill=blue, inner sep=1pt] {};
               
               \draw [thick, blue] (-10,-4) node[circle, fill=blue, inner sep=1pt] {} to (-10, -3)
                                   to [out=90, in=180] (-9, -2)
                                   to (-3.7, -2);
               \draw [thick, blue] (-2.7,-2) to (4.3,-2); 
               \draw [thick, blue] (5.3, -2) to (9,-2)
                                   to [out=0, in=90] (10, -3)
                                   to (10, -4) node[circle, fill=blue, inner sep=1pt] {};       
           \end{scope})
       \end{tikzpicture}
       \caption{The marked surface $\Sigma_1$ with an arc $\mathsf{x}$ and a multicurve $\mathsf{Y}$}
       \label{fig:surface_arc_multicurve_1}
   \end{figure}

   By applying smoothings according to the sequence $\varepsilon =(+, -, -, -, +, +, -, -, +, +, +, -)$ along the arc $\mathsf{x}$ from left to right at crossings in $\mathsf{x} \cap \mathsf{Y}$, we obtain the configuration illustrated in Figure~\ref{fig:nested_contractible_chain} where the red and blue unknot components form a nested contractible sequence.

    \begin{figure}[H]
   	\centering
   	\begin{tikzpicture}[scale=.3]
   		\begin{scope}
   			\draw [solid] (-25,-3) arc (270: 90 : 1 and 3);
   			\draw [solid] (-25,-3) arc (-90: 90: 1 and 3) node[pos=.25, coordinate] (A) {};
   			
   			\draw [dashed] (-15,-3) arc (-90: 90: 1 and 3);
   			\draw [dashed] (-5,-3) arc (-90: 90: 1 and 3);
   			\draw [dashed] (5,-3) arc (-90: 90: 1 and 3);
   			\draw [dashed] (15,-3) arc (-90: 90: 1 and 3);
   			
   			\draw [solid] (-25,3) to (-24, 3)
   			to [out = 0, in= 270] (-23, 4) 
   			to (-23, 5);
   			
   			\draw [solid] (-23,5) arc (-180: 180 : 3 and 1);
   			
   			\draw [solid] (-17,5) to (-17, 4) 
   			to [out=270, in=180] (-16, 3) 
   			to (16, 3) to [out = 0, in= 270] (17,4) to (17, 5);
   			
   			\draw [solid] (17, 5) arc (-180: 180 : 3 and 1);
   			
   			\draw [solid] (23,5) to (23, 4) to [out=270, in=180] (24, 3) to (25, 3);
   			
   			\draw [solid] (25, 3) arc (90: -90: 1 and 3);
   			
   			\draw [solid] (-25,-3) to (-14, -3) to [out=0, in= 90] (-13, -4) to (-13, -5);
   			
   			\draw [solid] (-13, -5) arc (-180: 180 : 3 and 1);
   			
   			\draw [solid] (-7, -5) to (-7, -4) to [out=90, in=180] (-6,-3) to (-4, -3) to [out=0, in=90] (-3, -4) to (-3, -5);
   			
   			\draw [solid] (-3, -5) arc (-180: 180 : 3 and 1);
   			
   			\draw [solid] (3, -5) to (3, -4) to [out=90, in=180] (4,-3) to (6, -3) to [out=0, in=90] (7, -4) to (7, -5);
   			
   			\draw [solid] (7, -5) arc (-180: 180 : 3 and 1);
   			
   			\draw [solid] (13, -5) to (13, -4) to [out=90, in=180] (14,-3) to (25, -3);
   			
   			\draw [thick] (-24,3) to [out=270, in=0] (-24,0) node[circle, fill=black, inner sep=1pt] {};
   			\draw [thick, dashed] (-23, 4) arc (180:0: 3 and 1);
   			\draw [thick] (-16,3) to [out=270, in=180] (-15, 2)
   			                   to (-4, 2)
   			                   to [out=0, in=0] (-4,1);
   			\draw [thick] (-20, 4) node[circle, fill=black, inner sep=1pt] {} to (-20, 2)
   			to [out=270, in=180] (-19, 1)
   			to (-4, 1);
   			
   			\draw [thick, red] (-22.5, 0) to (-3.5, 0)
   			                    to [out=0, in=180] (-2.5, 1);
   			\draw [thick, red] (-22.5, -1) to (-3.5, -1)
   			                    to [out=0, in=180] (-2.5, -2);
   			\draw [thick, red] (-22.5,0) to [out=180, in=180] (-22.5, -1);
   			\draw [thick, red] (-2.5,1) to (4.5,1)
   			                    to [out=0, in=180] (5.5,0)
   			                    to (23.5, 0)
   			                    to [out=0, in=0] (23.5,-1)
   			                    to (5.5,-1)
   			                    to [out=180, in=0] (4.5, -2)
   			                    to (-2.5, -2);
   			                    
   			 \draw [thick, blue] (-2.5,0) to (4.5,0) 
   			                     to [out=0, in=0] (4.5,-1)
   			                     to (-2.5,-1)
   			                     to [out=180, in=180] (-2.5,0);  
   			                     
   			 \draw [thick] (24,3) to [out=270, in=180] (26,0) node[circle, fill=black, inner sep=1pt] {};
   			 \draw [thick, dashed] (23, 4) arc (0:180: 3 and 1);
   			 \draw [thick] (16,3) to [out=270, in=0] (15, 2)
   			 to (6, 2)
   			 to [out=180, in=180] (6,1);
   			 \draw [thick] (20, 4) node[circle, fill=black, inner sep=1pt] {} to (20, 2)
   			 to [out=270, in=0] (19, 1)
   			 to (6, 1);
   			 
   			 \draw [thick] (-24, -1) node[circle, fill=black, inner sep=1pt] {} to (-23.5,-1) to [out=0, in=0] ($(A)+(1,0)$) to (A) node[circle, fill=black, inner sep=1pt] {};
   			 
   			 \draw [thick] (26, -1) node[circle, fill=black, inner sep=1pt] {} to (25,-1) to [out=180, in=180] ($(A)+(49,0)$) to ($(A)+(50,0)$) node[circle, fill=black, inner sep=1pt] {};
   			
   			 \draw [thick] (-10, -4) node[circle, fill=black, inner sep=1pt] {} to (-10, -3) to [out=90, in=180] (-9, -2) to (-5,-2) 
   			               to [out=0, in=90] (-4,-3);
   			 \draw [thick, dashed] (-3, -4) arc (180:0: 3 and 1);
   			 \draw [thick] (10, -4) node[circle, fill=black, inner sep=1pt] {} to (10, -3) to [out=90, in=0] (9, -2) to (6,-2) 
   			 to [out=180, in=90] (4.5,-3);             
   		\end{scope})
   	\end{tikzpicture}
        \caption{A nested contractible chain}
       \label{fig:nested_contractible_chain}
   \end{figure}
   \end{example}

   \nocite{Przytycki2016SkeinAO}

\bibliographystyle{amsalphaURL}		
\bibliography{referenceEprint}    
\end{document}